\DeclareMathAlphabet{\mathpzc}{OT1}{pzc}{m}{it} 
\newtheorem{Th}{Theorem}[section]              
\newtheorem*{Th2}{Theorem}                     
\newtheorem{Prop}[Th]{Proposition}
\newtheorem{Lem}{Lemma}[section]
\newcommand{\B}{\mathbb{B}}
\newcommand{\E}{\mathbb{E}}
\newcommand{\F}{\mathbb{F}}
\newcommand{\G}{\Gamma}
\DeclareMathOperator{\supp}{supp}
\DeclareMathOperator{\spann}{span}
\title[Uniformly convex and smooth spaces and Carleson measures in Bessel settings]
      {Characterization of uniformly convex and smooth Banach spaces by using Carleson measures in Bessel settings}
\author[J.J. Betancor]{Jorge J. Betancor}
\author[A.J. Castro]{Alejandro J. Castro}
\author[L. Rodríguez-Mesa]{Lourdes Rodríguez-Mesa}
\address{\newline
        Jorge J. Betancor, Alejandro J. Castro, and Lourdes Rodríguez-Mesa \newline
        Departamento de Análisis Matemático,
        Universidad de la Laguna, \newline
        Campus de Anchieta, Avda. Astrofísico Francisco Sánchez, s/n, \newline
        38271, La Laguna (Sta. Cruz de Tenerife), Spain}
\email{jbetanco@ull.es, ajcastro@ull.es, lrguez@ull.es}
\keywords{$BMO$, Bessel operators, Carleson measures, uniformly convex, uniformly smooth, Banach spaces}
\subjclass[2010]{46E40, 46B03, 42A50}
\thanks{The authors are partially supported by MTM2010/17974. The second author is also supported by a FPU grant from the Government of Spain}
\begin{document}


  \maketitle                

  \begin{abstract}
    In this paper we obtain new characterizations of the $q$-uniformly convex and smooth Banach spaces by using Carleson measures.
    These measures are defined by Poisson integral associated with Bessel operators and Banach valued $BMO$-functions. By the way we describe
    $q$-uniformly convexity and smoothness of a Banach space in terms of the mapping properties of the Lusin integral defined by the Poisson
    semigroup for Bessel operators.
  \end{abstract}

    \section{Introduction} \label{sec:intro}

    It is well-known that vector valued harmonic analysis and geometry of Banach spaces are closely connected. Some geometric properties of a Banach
    space $\B$ are characterized by the boundedness in $\B$-valued $L^p$ or $BMO$ spaces of some harmonic analysis operators
    (Riesz transforms, imaginary powers, Littlewood-Paley $g$-functions, \dots). These properties have also a description by using
    martingales transforms. The celebrated papers of Bourgain \cite{Bou} and Burkholder \cite{Bu} concerning to $UMD$ (\emph{Unconditional Martingale Difference}) spaces contain the first main
    results of this theory. In the last years this area has a great activity. In \cite{Xu} Xu studied the one side Littlewood-Paley theory for
    Banach valued functions and he obtained new characterizations for the uniformly convex and smooth Banach spaces. The results in \cite{Xu}
    were generalized by Martínez, Torrea and Xu \cite{MTX} to the diffusion semigroup setting. Harmonic analysis operators associated with
    Bessel, Hermite, Laguerre and Ornstein-Uhlenbeck  operators allow also to characterize $UMD$, convexity and smoothness properties of
    Banach spaces (see \cite{AMST}, \cite{AST}, \cite{BFMT}, \cite{BFR}, \cite{BFRST1}, \cite{HTV}, amongst others).\\

    Recently, Ouyang and Xu \cite{OX} have studied the relationship between vector valued $BMO$ functions and the Carleson measures defined by their
    Poisson integrals. They obtained new characterizations for those Banach spaces admitting an equivalent norm which is $q$-uniformly convex or
    $q$-uniformly smooth.\\

    In this paper we use the Poisson integrals associated with Bessel operators to define Carleson measures that allow us to characterize
    (modulus renorming) $q$-uniformly convex and smooth Banach space. We consider the Banach valued odd $BMO$ functions on $\mathbb{R}$.
    In \cite{BCFR2} the scalar space of odd $BMO$ functions was described by using Carleson measures.\\

    Assume that $\B$ is a Banach space. We say that a locally integrable function $f: \mathbb{R} \longrightarrow \B$ has bounded mean oscillation,
    written $f \in BMO(\mathbb{R},\B)$, when
    $$\|f\|_{BMO(\mathbb{R},\B)} = \sup_{I \subset \mathbb{R}} \frac{1}{|I|} \int_I \|f(x)-f_I\|_\B dx,$$
    where the supremum is taken over all bounded intervals $I$ in $\mathbb{R}$. Here $f_I = \frac{1}{|I|} \int_I f(x)dx$, where the integral is understood in the Bochner sense, and $|I|$ denotes the length of $I$, for every bounded interval $I$ in $\mathbb{R}$. By $BMO_o(\mathbb{R},\B)$ we represent the space of the odd functions in $BMO(\mathbb{R},\B)$. According to the well-known John-Nirenberg property we can see that a $\B$-valued locally integrable and odd function $f$ on $\mathbb{R}$ is in $BMO_o(\mathbb{R},\B)$ if, and only if, for some (equivalently, for any)
    $1 \leq p < \infty$, there exists $C>0$ such that
    \begin{equation} \label{1.1}
        \left( \frac{1}{|I|} \int_I \|f(x)-f_I\|_\B^p dx \right)^{1/p}
            \leq C,
    \end{equation}
    for every interval $I=(a,b)$, $0<a<b<\infty$, and
    \begin{equation}\label{1.2}
        \left( \frac{1}{|I|} \int_I \|f(x)\|_\B^p dx \right)^{1/p}
            \leq C,
    \end{equation}
    for each interval $I=(0,b)$, $0<b<\infty$. Moreover, for every $f \in BMO_o(\mathbb{R},\B)$ and $1\leq p < \infty$,
    $\|f\|_{BMO(\mathbb{R},\B)} \simeq \inf \{ C>0, \text{\eqref{1.1} and \eqref{1.2} hold}\}$.
    As in the classical case if $f \in BMO(\mathbb{R},\B)$, then $\int_0^\infty \|f(x)\|_\B / (1+x^2)dx<\infty$.\\

    In \cite{MS} Muckenhoupt and Stein developed the harmonic analysis theory in the ultraspherical and Bessel settings. Taking as a starting point
    the ideas in \cite{MS} in the last years several authors have investigated boundedness properties of harmonic analysis operators associated
    with Bessel operators  (\cite{BBFMT}, \cite{BCC1}, \cite{BCN}, \cite{BFS}, \cite{BS}, \cite{NS}).\\

    We consider, for every $\lambda>0$, the Bessel operator $\Delta_\lambda = -x^{-\lambda} \frac{d}{dx}x^{2\lambda}\frac{d}{dx}x^{-\lambda}$,
    $x \in (0,\infty)$, and the Hankel transformation $h_\lambda$ defined by
    $$h_\lambda(f)(x)=\int_0^\infty \sqrt{xy} J_{\lambda-1/2}(xy) f(y) dy, \quad x \in (0,\infty),$$
    for every $f \in L^1(0,\infty) \cap L^2(0,\infty)$. Here, $J_\nu$ denotes the Bessel function of the first kind and order $\nu$. $h_\lambda$ can be extended to $L^2(0,\infty)$ as an isometry of $L^2(0,\infty)$ where $h_\lambda^{-1}=h_\lambda$. If $f \in C_c^\infty(0,\infty)$, the space of
    smooth functions with compact support, we have that
    $$h_\lambda(\Delta_\lambda f)(y)=y^2 h_\lambda(f)(y), \quad y \in (0,\infty).$$
    We define the operator $\tilde{\Delta}_\lambda$ as follows,
    $$\tilde{\Delta}_\lambda f = h_\lambda(y^2 h_\lambda (f)), \quad f \in D(\tilde{\Delta}_\lambda),$$
    where the domain $D(\tilde{\Delta}_\lambda)$ of $\tilde{\Delta}_\lambda$ is
    $$D(\tilde{\Delta}_\lambda) = \{f \in L^2(0,\infty) : y^2 h_\lambda (f) \in L^2(0,\infty)\}.$$
    $\tilde{\Delta}_\lambda$ is a closed and positive operator. Note that $\tilde{\Delta}_\lambda f = \Delta_\lambda f$, $f \in C_c^\infty(0,\infty)$.
    In the sequel we refer to $\tilde{\Delta}_\lambda$ also by $\Delta_\lambda$.\\

    By $\{P_t^\lambda\}_{t>0}$ we represent the Poisson semigroup associated with $\Delta_\lambda$, or, in other words, the semigroup of operators generated by
    $-\sqrt{\Delta_\lambda}$. According to \cite[(16.4)]{MS} we can write, for every $f \in L^p(0,\infty)$, $1 \leq p \leq \infty$,
    $$P_t^\lambda(f)(x)=\int_0^\infty P_t^\lambda(x,y)f(y)dy, \quad x,t \in (0,\infty),$$
    where
    \begin{equation}\label{Bessel_kernel}
        P_t^\lambda(x,y)=\frac{2\lambda (xy)^\lambda t}{\pi} \int_0^\pi \frac{(\sin \theta)^{2\lambda-1}}{[(x-y)^2+t^2+2xy(1-\cos \theta)]^{\lambda+1}} d\theta, \quad x,y,t \in (0,\infty).
    \end{equation}
    $\{P_t^\lambda\}_{t>0}$ is a contractive semigroup in $L^p(0,\infty)$, $1 \leq p \leq \infty$. Since the kernel function $P_t^\lambda(x,y)\geq 0$,
    $x,y,t \in (0,\infty)$, the operator $P_t^\lambda$ is also a contraction in the Lebesgue-Bochner space $L^p((0,\infty),\B)$, for every $1 \leq p \leq \infty$
    and $t>0$.\\

    We say that a positive measure $\mu$ on $(0,\infty) \times (0,\infty)$ is Carleson when there exists $C>0$ satisfying
    $$\frac{\mu(I \times (0,|I|))}{|I|} \leq C,$$
    for every bounded interval $I$ in $(0,\infty)$. It is well-known that the functions of bounded mean oscillation on $\mathbb{R}^n$ can be characterized by using
    Carleson measures. In \cite{BCFR2} the following result was established.

    \begin{Th2}[{\cite[Theorem 1.1]{BCFR2}}]
        Let $\lambda>0$. Assume that $f$ is a locally integrable function in $[0,\infty)$. If we define $f_o$ as the odd extension of $f$ to $\mathbb{R}$, then
        $f_o \in BMO_o(\mathbb{R})$ if and only if $(1+x^2)^{-1}f \in L^1(0,\infty)$ and the measure $\gamma_f$ given by
        $$d\gamma_f(x,t) = \left| t \partial_t P_t^\lambda(f)(x) \right|^2 \frac{dxdt}{t}$$
        is Carleson on $(0,\infty) \times (0,\infty)$.
    \end{Th2}

    We now recall the definitions of convexity and smoothness for a Banach space $\B$. The modulus of convexity $\delta_\B$ and of smoothness $\rho_\B$ are defined by
    $$\delta_\B(\varepsilon)=\inf\{1-\| \frac{a+b}{2} \|_\B : a,b \in \B, \|a\|_\B=\|b\|_\B=1, \|a-b\|_\B=\varepsilon \}, \quad 0< \varepsilon < 2,$$
    and
    $$\rho_\B(t)=\sup\{ \frac{\|a+tb\|_\B + \|a-tb\|_\B}{2} : a,b \in \B, \|a\|_\B=\|b\|_\B=1 \}, \quad t>0.$$
    We say that $\B$ is uniformly convex (respectively, uniformly smooth) when $\delta_\B(\varepsilon)>0$ (respectively, $\lim_{t \to 0} \rho_\B(t)/t=0$).
    Also, $\B$ is called $q$-uniformly convex, $q \geq 2$ (respectively, $q$-uniformly smooth, $1<q\leq 2$) when there exist $C>0$ such that $\delta_\B(\varepsilon) \geq C \varepsilon^q$,
    $0<\varepsilon<2$ (respectively, $\rho_\B(t) \leq C t^q$, $t>0$).\\

    Pisier \cite{Pi} proved that $\B$ has an equivalent norm that is $q$-uniformly convex (respectively, $q$-uniformly smooth) if and only if $\B$ has martingale
    cotype $q$ (respectively, martingale type $q$). Xu \cite{Xu} established the corresponding characterization when the martingale type and cotype is replaced by the
    Lusin type and cotype associated with the Poisson semigroup for the torus. The result of Xu was extended to the diffusion semigroup setting in \cite{MTX}.
    Similar properties have been obtained in the Bessel (\cite{BFMT}) and Laguerre (\cite{BFRST1}) contexts.
    Recently, Ouyang and Xu \cite{OX} characterized those Banach spaces having an equivalent norm that is $q$-uniformly convex or smooth by using Carleson measures and $\B$-valued $BMO$ functions, and lately Jiao \cite{Ji} gave the martingale version of this result. \\

    In this paper we obtain new characterizations for $q$-uniformly convexity and smoothness of a Banach space by using Carleson measures associated with the Bessel
    Poisson integrals $P_t^\lambda(f)$ of $f$ belonging to $BMO_o(\mathbb{R},\B)$.\\

    The main results of this paper are the following ones.

    \begin{Th}\label{Th_3.1_OX}
        Let $\B$ be a Banach space, $\lambda>1$ and $2 \leq q < \infty$. Then, the following statements are equivalent.
        \begin{itemize}
            \item[$(i)$] There exists $C>0$ such that, for every $f \in BMO_o(\mathbb{R},\B)$, the measure $d\mu_f$ defined by
            $$d\mu_f(x,t)=\|t \partial_t P_t^\lambda (f)(x)\|_\B^q \frac{dxdt}{t}$$
            is Carleson on $(0,\infty)\times (0,\infty)$ and
            $$  \sup_{I} \frac{1}{|I|} \int_0^{|I|} \int_I \|t \partial_t P_t^\lambda(f)(x) \|^q_\B \frac{dx dt}{t}
                    \leq C \|f\|^q_{BMO_o(\mathbb{R},\B)},$$
            where the supremum is taken over all bounded intervals $I$ in $(0,\infty)$. \\ \quad
            \item[$(ii)$] $\B$ has an equivalent norm which is $q$-uniformly convex.
        \end{itemize}
    \end{Th}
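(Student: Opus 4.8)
The plan is to reduce the Bessel statement to the classical Euclidean result of Ouyang and Xu \cite{OX}, using the odd-extension structure of $BMO_o(\mathbb{R},\B)$ together with a comparison between the Bessel Poisson kernel $P_t^\lambda(x,y)$ and the classical Poisson kernel on $\mathbb{R}$. The first step is to establish the direction $(ii)\Rightarrow(i)$. Assuming $\B$ has an equivalent $q$-uniformly convex norm, the result of Ouyang and Xu gives the Carleson estimate for the \emph{classical} Poisson integral $\mathbb{P}_t$ of any $g\in BMO(\mathbb{R},\B)$. Given $f\in BMO_o(\mathbb{R},\B)$, write $P_t^\lambda(f) = \mathbb{P}_t(f) + R_t(f)$, where $R_t$ is the operator with kernel $P_t^\lambda(x,y) - \mathbb{P}_t(x-y) - \mathbb{P}_t(x+y)$ acting against $f$ restricted to $(0,\infty)$ (the $+$ term appears because $f$ is odd on $\mathbb{R}$, so $\mathbb{P}_t(f)(x)$ for $x>0$ equals $\int_0^\infty(\mathbb{P}_t(x-y)-\mathbb{P}_t(x+y))f(y)\,dy$). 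One then needs pointwise bounds showing that $t\partial_t$ of this difference kernel has enough decay in $y$ (gain of a power of $t/(|x-y|+t)$ or of $t/(x+y+t)$, uniformly for $\lambda>1$) so that $R_t(f)$ and $t\partial_t R_t(f)$ are controlled by $\|f\|_{BMO_o(\mathbb{R},\B)}$ via the growth estimate $\int_0^\infty \|f(y)\|_\B/(1+y^2)\,dy<\infty$ recalled in the introduction; the resulting error term gives a Carleson measure by a direct size estimate. Such kernel comparisons are standard in the Bessel literature (cf. \cite{BCFR2}, \cite{BBFMT}) and for $\lambda>1$ the relevant integrals converge. Adding the classical Carleson bound for $t\partial_t\mathbb{P}_t(f)$ to this error yields $(i)$.

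For the converse $(i)\Rightarrow(ii)$, I would argue contrapositively: if $\B$ does \emph{not} admit an equivalent $q$-uniformly convex norm, then by \cite{OX} there exists $g\in BMO(\mathbb{R},\B)$ for which $\|t\partial_t\mathbb{P}_t(g)(x)\|_\B^q\,dx\,dt/t$ fails to be Carleson (with uniform control). Replacing $g$ by its odd part $g_o(x)=\tfrac12(g(x)-g(-x))$, which still lies in $BMO(\mathbb{R},\B)$ with comparable norm, and observing that $\mathbb{P}_t(g_o)$ is odd and hence — up to the same kernel-difference error term as above, now bounded — comparable on $(0,\infty)$ to $P_t^\lambda(g_o)$, one transfers the failure of the Carleson condition to the Bessel Poisson integral. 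Thus $(i)$ cannot hold for $f=g_o\in BMO_o(\mathbb{R},\B)$. The only subtlety is ensuring that the error term genuinely is dominated in the $L^q(dx\,dt/t)$-Carleson sense and does not accidentally restore the Carleson property; this follows because the error is bounded \emph{pointwise} by $C\|g\|_{BMO}$ times an integrable-in-$(x,t)$ bump, so it contributes a finite Carleson measure and cannot cancel an infinite one.

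An alternative, more self-contained route avoids citing \cite{OX} as a black box and instead mirrors the scheme of \cite{Xu}, \cite{MTX}, \cite{BFMT}: for $(ii)\Rightarrow(i)$ one uses the $q$-uniform convexity to get the pointwise inequality $\|a\|_\B^q - \|b\|_\B^q \le q\,\mathrm{Re}\langle \cdot\rangle + C\|a-b\|_\B^q$ type estimates (equivalently, the Lusin-cotype inequality for the Bessel Poisson semigroup established in \cite{BFMT}), integrate against the Carleson box, and combine with the scalar Carleson characterization of $BMO_o$ quoted above as \cite[Theorem 1.1]{BCFR2} applied to $x\mapsto\langle f(x),e^*\rangle$ — but the duality/linearization that works in the scalar case must be replaced by the vector-valued Lusin-area machinery. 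For $(i)\Rightarrow(ii)$ one tests the Carleson condition on suitable $\B$-valued atoms or on the extremal configurations witnessing the modulus of convexity, exactly as in \cite[\S3]{OX}. The main obstacle in either approach is the same: controlling the Bessel-versus-classical Poisson kernel discrepancy near the boundary $x=0$, where the Bessel geometry genuinely differs; the hypothesis $\lambda>1$ is presumably what makes the relevant $\theta$-integral in \eqref{Bessel_kernel} and its $t$-derivative small enough after integration. I expect that handling this error term — proving it yields a bona fide Carleson measure with norm $\lesssim\|f\|_{BMO_o(\mathbb{R},\B)}^q$ — is where the real work lies, while the geometric Banach-space input is imported wholesale from \cite{OX} (or from \cite{BFMT} for the Lusin-function formulation).
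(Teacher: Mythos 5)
Your main strategy---reduce both implications to the Euclidean result of Ouyang and Xu by writing $P_t^\lambda = \mathbb{P}_t + R_t$ and showing the error $R_t$ contributes only a bounded Carleson piece---is not the route the paper takes, and more importantly it has a gap that the paper's own estimates show is real. The relevant error kernel is \emph{not} pointwise small uniformly: from the analysis in the proof (estimate \eqref{3.8}) one has $|P_{t,1}^\lambda(x,y) - P_t(x-y)| \lesssim \frac{t}{xy}\bigl(1+\log(1+\tfrac{xy}{(x-y)^2})\bigr)$, which blows up as $x,y\to 0$ and is not dominated by an ``integrable-in-$(x,t)$ bump'' times $\|f\|_{BMO_o}$ for general $f\in BMO_o(\mathbb{R},\B)$. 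The reason the paper's $(ii)\Rightarrow(i)$ argument nevertheless closes is that it does not compare kernels globally: it decomposes $f\chi_{(0,\infty)} = (f-f_{2I})\chi_{2I} + (f-f_{2I})\chi_{(0,\infty)\setminus 2I} + f_{2I}$ and handles the local piece via Proposition~\ref{Lem_principal} (the Lusin-area characterization of $q$-uniform convexity in the Bessel setting established in Section~\ref{sec:propo}), the far piece via the crude decay $|t\partial_t P_t^\lambda(x,y)|\lesssim P_t^\lambda(x,y)$, and, crucially, isolates the Bessel-vs-classical discrepancy in the \emph{constant} piece $f_3=f_{2I}$, where the needed bound \eqref{objetivo3} is compensated by the $I$-dependent prefactor $(x_I+|I|)^q/|I|^{q+1}$. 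Without that localization, the $1/(xy)$ singularity of the error kernel is not controllable; so your first route as stated does not go through.

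The $(i)\Rightarrow(ii)$ contrapositive is further from what the paper does and is where the proposal most clearly stalls. You need the failure of the classical Carleson bound to transfer to the Bessel setting via a \emph{uniformly} (in $f$) Carleson-bounded error $R_t(f)$, which, by the same observation above, is not available. The paper instead proves $(i)\Rightarrow(ii)$ \emph{directly}: it reduces (via Proposition~\ref{Lem_principal} and finite-dimensional approximation) to showing $S^q_{\lambda,+}$ is bounded on $L^q$, which in turn is obtained from the $BMO$--$L^\infty_c$ estimate \eqref{3.11}, established by a duality argument against $H^1\bigl((0,\infty), L^{q'}(\Gamma(0),\tfrac{dtdy}{t^2},\E^*)\bigr)$. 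The technical engine is the operator $\Phi_N$, shown in Appendix~\ref{subsec:app2} (Lemmas~\ref{Lem5.1}--\ref{Lem5.3}, Proposition~\ref{Pro5.2}) to be a vector-valued Calder\'on--Zygmund operator with $N$-uniform bounds, and the hypothesis $(i)$ enters precisely through the function $C_\lambda^q(f)\in L^\infty$. Your ``alternative route'' sentence gestures at this duality scheme but does not develop it; the heavy lifting (the CZ analysis of the kernel $k^\lambda_{s,t}(x,y;u,v) = ts\,\partial_r^2 P_r^\lambda(x+y,u+v)|_{r=s+t}$, the $H^1\to L^1$ bound for $\Phi_N$, and the $H^1_o$-atomic machinery) is exactly where the proof lives and is absent from the proposal.
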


    \begin{Th}\label{Th_4.1_OX}
        Let $\B$ be a Banach space, $\lambda>1$ and $1 < q \leq 2$. Then, the following assertions are equivalent.
        \begin{itemize}
            \item[$(i)$] There exists $C>0$ such that, for every odd
             $\B$-valued function $f$, satisfying that $(1+x^2)^{-1}f \in L^1(\mathbb{R},\B)$,
            $$  \|f\|^q_{BMO_o(\mathbb{R},\B)}
                    \leq C  \sup_{I} \frac{1}{|I|}\int_0^{|I|} \int_I \|t \partial_t P_t^\lambda(f)(x) \|^q_\B \frac{dx dt}{t},$$
            where the supremum is taken over all bounded intervals $I$ in $(0,\infty)$. \\ \quad
            \item[$(ii)$] $\B$ has an equivalent $q$-uniformly smooth norm.
        \end{itemize}
    \end{Th}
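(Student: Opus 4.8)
The plan is to deduce Theorem~\ref{Th_4.1_OX} from Theorem~\ref{Th_3.1_OX} by a duality argument, in the spirit of \cite[\S 4]{OX}. Denote by $q'$ the conjugate exponent of $q$, so that $2\le q'<\infty$. Recall that a Banach space has an equivalent $q$-uniformly smooth norm if and only if its dual has an equivalent $q'$-uniformly convex norm (the classical duality of the moduli of smoothness and convexity; see also \cite{Pi}), so that statement $(ii)$ is equivalent to ``$\B^*$ has an equivalent $q'$-uniformly convex norm''. The analytic bridge between $\B$ and $\B^*$ is a Calderón reproducing formula for the Bessel Poisson semigroup: from the identity $\int_0^\infty (t\partial_t P_t^\lambda)^2\,\frac{dt}{t}=c_\lambda^{-1}\,\mathrm{Id}$ on $L^2(0,\infty)$ (readily checked on the Hankel transform side, where $\Delta_\lambda$ acts as multiplication by $y^2$), together with the self-adjointness of $t\partial_t P_t^\lambda$, one derives, for an odd $\B$-valued $f$ with $(1+x^2)^{-1}f\in L^1(\mathbb{R},\B)$ and a sufficiently regular, compactly supported, odd $\B^*$-valued $g$,
$$\int_0^\infty \langle f(x),g(x)\rangle\,dx \;=\; c_\lambda \int_0^\infty\!\!\int_0^\infty \big\langle t\partial_t P_t^\lambda(f)(x),\, t\partial_t P_t^\lambda(g)(x)\big\rangle\,\frac{dx\,dt}{t},$$
the right-hand side converging absolutely thanks to the decay and the cancellation of $t\partial_t P_t^\lambda(g)$; here the hypothesis $\lambda>1$ enters, to control the action of $P_t^\lambda$ on the non-oscillating part, exactly as in the scalar estimates of \cite{BCFR2}.

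\emph{Proof that $(ii)\Rightarrow(i)$.} Assume $\B^*$ is $q'$-uniformly convex after renorming, and let $f$ be odd with $(1+x^2)^{-1}f\in L^1(\mathbb{R},\B)$ for which the quantity on the right of $(i)$ is finite, i.e.\ $d\mu_f(x,t)=\|t\partial_t P_t^\lambda(f)(x)\|_\B^q\,\frac{dx\,dt}{t}$ is a Carleson measure on $(0,\infty)\times(0,\infty)$; write $\Lambda(f)$ for the $q$-th root of its Carleson constant. A standard measurable selection argument shows that $\|f\|_{BMO_o(\mathbb{R},\B)}$ is comparable to $\sup\big|\int_0^\infty\langle f,a\rangle\,dx\big|$, the supremum being over the odd $\B^*$-valued $H^1$-atoms $a$ (for an interval $I\subset(0,\infty)$ one picks a measurable $\varphi\colon I\to\B^*$ with $\|\varphi\|_{\B^*}\le1$ almost realizing $\|f(x)-f_I\|_\B$ and sets $a=(\varphi-\varphi_I)\mathbf{1}_I/|I|$; for $I=(0,b)$ one uses odd atoms supported near the origin, which require no extra cancellation, as in \cite{BCFR2}). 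Applying the reproducing formula with $g=a$, then the pointwise bound $|\langle u,v\rangle|\le\|u\|_\B\|v\|_{\B^*}$, and finally the Coifman--Meyer--Stein duality between Carleson measures and the tent space $T^1_{q'}$, we obtain
$$\Big|\int_0^\infty\langle f,a\rangle\,dx\Big| \;\le\; c_\lambda\!\int_0^\infty\!\!\int_0^\infty\!\|t\partial_t P_t^\lambda(f)(x)\|_\B\,\|t\partial_t P_t^\lambda(a)(x)\|_{\B^*}\,\frac{dx\,dt}{t}\;\le\;C\,\Lambda(f)\,\big\|S_{q'}^\lambda(a)\big\|_{L^1(0,\infty)},$$
where $S_{q'}^\lambda$ is the Lusin area integral of order $q'$ for $\{P_t^\lambda\}_{t>0}$. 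Since $\B^*$ is $q'$-uniformly convex, $S_{q'}^\lambda$ maps $H^1_o(\mathbb{R},\B^*)$ (the odd $\B^*$-valued Hardy space associated with the Bessel Poisson semigroup) boundedly into $L^1(0,\infty)$ — this follows from the $L^{q'}$-boundedness of the Littlewood--Paley $g$-function of order $q'$ (the inequality of martingale cotype $q'$ for $\{P_t^\lambda\}$ in $\B^*$, as in \cite{BFMT}) together with the standard atomic size estimates — so that $\|S_{q'}^\lambda(a)\|_{L^1}\lesssim\|a\|_{H^1_o(\mathbb{R},\B^*)}\lesssim 1$. Taking the supremum over the atoms $a$ yields $(i)$, and in particular shows that $f\in BMO_o(\mathbb{R},\B)$ whenever the right-hand side of $(i)$ is finite.

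\emph{Proof that $(i)\Rightarrow(ii)$.} This is the more delicate implication; here we follow the scheme of \cite[\S 4]{OX}, whose martingale counterpart is \cite{Ji}. Using the reproducing formula and the self-adjointness of $t\partial_t P_t^\lambda$ in the space variable, inequality $(i)$ is translated into a lower bound — in terms of the Carleson functional of $g$ — for the action of $g$ on $BMO_o(\mathbb{R},\B^*)$; combining this with the comparison between the odd Bessel setting and the dyadic martingale setting (and their respective $BMO$ spaces and square functionals) forces, by \cite{Ji}, that $\B$ has martingale type $q$, whence, by Pisier's theorem \cite{Pi}, $\B$ admits an equivalent $q$-uniformly smooth norm. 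Equivalently, one may first show that $(i)$ implies the Carleson inequality of Theorem~\ref{Th_3.1_OX}$(i)$ for $\B^*$-valued odd $BMO$ functions (with exponent $q'$) and then invoke Theorem~\ref{Th_3.1_OX}, $(i)\Rightarrow(ii)$, applied to $\B^*$.

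The crux is precisely the implication $(i)\Rightarrow(ii)$: the square-function estimates that one would naturally use in a direct duality proof — namely the $L^{q}$- (resp.\ $L^{q'}$-) boundedness of the Littlewood--Paley operators — are themselves equivalent to the very smoothness of $\B$ (resp.\ convexity of $\B^*$) that one is trying to establish, so the circularity must be broken, either through the martingale-type characterization (\cite{Pi}, \cite{Ji}) or by bootstrapping through Theorem~\ref{Th_3.1_OX} for $\B^*$. The remaining ingredients — the Calderón reproducing formula for $\{P_t^\lambda\}_{t>0}$ and its absolute convergence for data satisfying only $(1+x^2)^{-1}f\in L^1$, the $H^1_o(\mathbb{R},\B^*)$--$BMO_o(\mathbb{R},\B)$ duality in the odd Bessel framework, and the $q'$-order Lusin area characterization of $H^1_o(\mathbb{R},\B^*)$ for $q'$-uniformly convex $\B^*$ — are standard in spirit (\cite{BCFR2}, \cite{BFMT}, \cite{OX}) but need the adaptations proper to the Hankel transform setting, in particular the hypothesis $\lambda>1$, which governs the behaviour of $P_t^\lambda$ on constants.
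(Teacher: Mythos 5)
Your argument for $(ii)\Rightarrow(i)$ is essentially the paper's: pair $f$ against $H^1_o(\mathbb{R},\B^*)$-atoms via the Calder\'on reproducing formula for $\{P_t^\lambda\}_{t>0}$, use the Carleson/tent-space duality to bring out the Carleson constant of $d\mu_f$, and then absorb $\|S_{\lambda,+}^{q'}(a)\|_{L^1}$ via the $H^1_o$--$L^1$ bound for the $q'$-Lusin area integral (Proposition~\ref{Lem_principal} and \cite[Corollaries 2.6, 3.2]{Xu}), which holds because $\B^*$ has an equivalent $q'$-uniformly convex norm. Modulo the technical justifications of absolute convergence (which in the paper rely on \cite[Propositions 4.3--4.4]{BCFR2}), this half is sound.

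The direction $(i)\Rightarrow(ii)$, however, is not proved in your proposal; it is only described. You offer two routes and carry out neither. Route 1 (via \cite{Ji} and Pisier) hinges on ``the comparison between the odd Bessel setting and the dyadic martingale setting,'' i.e.\ a transference of hypothesis $(i)$ from the Bessel Poisson semigroup to a suitable martingale $BMO$/Carleson inequality; no such transference is standard, and none is supplied. Route 2 --- show that $(i)$ for $\B$ forces the Carleson inequality of Theorem~\ref{Th_3.1_OX}$(i)$ for $\B^*$ with exponent $q'$, then invoke Theorem~\ref{Th_3.1_OX} --- is essentially a restatement of what needs to be proved: the desired Carleson bound for $\B^*$-valued $BMO$ data is not a formal consequence of a lower inequality for $\B$-valued data and would itself require a square-function estimate that, as you yourself observe, is what one is trying to establish.

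What the paper actually does at this point is the content you are missing. After the finite-dimensional reduction to a subspace $\E\subset\B$ and the equivalent goal $\|S_\lambda^{q'}(g)\|_{L^1(0,\infty)}\le C\|g\|_{H^1_o(\mathbb{R},\E^*)}$ (via Proposition~\ref{Lem_principal}), one tests against $h\in L^\infty((0,m),\E_N)$ with $\E_N=L^{q}\bigl(\G_N(0),\frac{dt\,dy}{t^2},\E\bigr)$, interchanges integrals to produce the auxiliary function $\Psi_N(\chi_{(0,m)}h)$, and then --- crucially --- proves the uniform-in-$(N,m,\E,I)$ Carleson bound \eqref{objetivo6} for $t\partial_t P_t^\lambda(\Psi_N(\chi_{(0,m)}h))$. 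That bound decomposes $\chi_{(0,m)}h$ into local and tail parts; the local part is handled by $L^q$-boundedness of $\Phi_N$ and the tail part by kernel size estimates, and the whole hinges on showing that the family $\{\Phi_N\}$ is a uniformly bounded family of vector-valued Calder\'on--Zygmund operators from $H^1$ to $L^1$ (Lemmas~\ref{Lem5.1}--\ref{Lem5.3} and Proposition~\ref{Pro5.2}). Only once \eqref{objetivo6} is in hand does hypothesis $(i)$ enter, applied to the $\E$-valued function $\Psi_N(\chi_{(0,m)}h)$, giving a uniform $BMO_o(\mathbb{R},\E)$ bound; the $H^1_o$--$BMO_o$ duality then closes the argument. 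Your sketch also misattributes the role of $\lambda>1$: in this implication it is not about the behavior of $P_t^\lambda$ on constants (that enters in Theorem~\ref{Th_3.1_OX}$(ii)\Rightarrow(i)$), but about securing the H\"ormander-type regularity of the CZ kernel $K_N^\lambda(x,v)$ in Lemma~\ref{Lem5.2}$(b)$--$(c)$, where the proof explicitly uses $\lambda>1$ to control $\partial_y\partial_r^2 P_{r,1}^\lambda$.
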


    Theorems~\ref{Th_3.1_OX} and \ref{Th_4.1_OX} can be seen as versions of \cite[Theorems 3.1 and 4.1]{OX}, respectively.\\

    In order to prove our theorems we need to obtain characterizations of the convexity and smoothness for a Banach space by using certain area
    integrals involving the Poisson semigroup $\{P_t^\lambda\}_{t>0}$.\\

    We define the following sets
    $$\G(x)=\{(y,t) \in \mathbb{R}\times (0,\infty) : |y-x|<t\}, \quad x \in \mathbb{R},$$
    and
    $$\G_+(x)=\{(y,t) \in (0,\infty)\times (0,\infty) : |y-x|<t\}, \quad x \in (0,\infty).$$
    We extend the definition of the Poisson kernel $P_t^\lambda(x,y)$ given in \eqref{Bessel_kernel} to $\mathbb{R} \times \mathbb{R}$,
    for every $t>0$, as follows
    \begin{equation*}
        P_t^\lambda(x,y)=\frac{2\lambda |xy|^\lambda t}{\pi} \int_0^\pi \frac{(\sin \theta)^{2\lambda-1}}{[(x-y)^2+t^2+2xy(1-\cos \theta)]^{\lambda+1}} d\theta.
    \end{equation*}
    Note that, for every $t>0$,
    \begin{equation}\label{(*)}
        P_t^\lambda(x,y)=P_t^\lambda(-x,y)=P_t^\lambda(x,-y), \quad x,y \in \mathbb{R}.
    \end{equation}
    We consider the Lusin integrals associated with the Poisson semigroup $\{P_t^\lambda\}_{t>0}$ defined by
    \begin{equation*}
        S_{\lambda}^q(f)(x)= \left( \int_{\G(x)} \left\| t \partial_t P_t^\lambda(f)(y) \right\|_\B^q \frac{dt dy}{t^2} \right)^{1/q}, \quad x \in \mathbb{R},
    \end{equation*}
    and
    \begin{equation*}
        S_{\lambda,+}^q(f)(x)= \left( \int_{\G_+(x)} \left\| t \partial_t P_t^\lambda(f)(y) \right\|_\B^q \frac{dt dy}{t^2} \right)^{1/q}, \quad x \in (0,\infty),
    \end{equation*}
    where $q>1$ and $f$ is a strongly $\B$-valued measurable function defined on $(0,\infty)$ such that
    $$\int_0^\infty \frac{\|f(y)\|_{\B}}{1+y^2}dy<\infty.$$
    It is not hard to see that
    \begin{equation}\label{equivalentes}
        S^q_{\lambda,+}(f)(x) \leq S^q_{\lambda}(f)(x) \leq 2^{1/q} S^q_{\lambda,+}(f)(x), \quad x \in (0,\infty).
    \end{equation}

    We denote by $H^1(\mathbb{R},\B)$ the $\B$-valued Hardy space. $H^1_o(\mathbb{R},\B)$ is the subspace of $H^1(\mathbb{R},\B)$ constituted by all
    those odd functions in $H^1(\mathbb{R},\B)$. A strongly measurable $\B$-valued function $a$ defined on $\mathbb{R}$ such that
    $\int_\mathbb{R} a(x)dx=0$ is called an $\infty$-atom (respectively, a $2$-atom) when there exists a bounded interval $I$ in $\mathbb{R}$
    such that $\supp(a)\subset I$ and $\|a\|_{L^\infty(\mathbb{R},\B)} \leq 1/|I|$ (respectively, $\|a\|_{L^2(\mathbb{R},\B)} \leq 1/|I|^{1/2}$).
    According to well-known atomic representations of the elements of $H^1(\mathbb{R},\B)$ (\cite{Hy}, \cite[p. 34, 40]{J}) we can see that a strongly measurable $\B$-valued
    odd function $f$ defined on $\mathbb{R}$ is in $H_o^1(\mathbb{R},\B)$ if, and only if, $f=\sum_{j=1}^\infty \lambda_j a_j$ in $L^1((0,\infty),\B)$, where $\{\lambda_j\}_{j=1}^\infty \subset \mathbb{C}$ satisfies that $\sum_{j=1}^\infty |\lambda_j|<\infty$ and $\{a_j\}_{j=1}^\infty$
    is a sequence of strongly measurable $\B$-valued functions defined on $(0,\infty)$ such that, for every $j \in \mathbb{N}$, $a_j$ is an
    $\infty$-atom supported on $(0,\infty)$ or there exists $\beta>0$ for which $\supp(a_j) \subset [0,\beta]$ and
    $\|a\|_{L^\infty((0,\infty),\B)} \leq 1/\beta$. Also, we can characterize the elements of $H_o^1(\mathbb{R},\B)$ similarly by using $2$-atoms. By proceeding as in the proof of \cite[Theorem 2.1]{Fr} we can show that a strongly measurable $\B$-valued
    odd function $f$ defined on $\mathbb{R}$ is in $H_o^1(\mathbb{R},\B)$ if, and only if, $f=\sum_{j=1}^\infty \lambda_j a_j$ in $L^1((0,\infty),\B)$, where $\{\lambda_j\}_{j=1}^\infty \subset \mathbb{C}$ is such that $\sum_{j=1}^\infty |\lambda_j|<\infty$, and $\{a_j\}_{j=1}^\infty$
    is a sequence of strongly measurable $\B$-valued functions defined on $(0,\infty)$ such that, for every $j \in \mathbb{N}$, $a_j$ is an
    $\infty$-atom supported on $(0,\infty)$ or $a_j=b_j \chi_{(0,\delta_j)}/\delta_j$, for a certain $b_j \in \B$, being $\|b_j\|_\B=1$
    and $\delta_j>0$. Here, $\chi_{(0,\delta)}$ denotes the characteristic function of $(0,\delta)$, for every $\delta>0$. The topology of $H^1_o(\mathbb{R},\B)$ is defined by the norms associated in the usual way with the above atomic representations.\\

    In \cite{BFMT} the martingale type and cotype of a Banach space is characterized by using Littlewood-Paley $g$-functions associated with the Poisson
    semigroup $\{P_t^\lambda\}_{t>0}$. In the next result we establish the corresponding properties involving Lusin area integrals $S^q_{\lambda,+}$.
    This proposition has interest in itself and it is useful in the proof of Theorems~\ref{Th_3.1_OX} and \ref{Th_4.1_OX}.

    \begin{Prop}\label{Lem_principal}
        Let $\B$ be a Banach space, $\lambda>0$ and $2 \leq q < \infty$. Then, the following assertions are equivalent.
        \begin{itemize}
            \item[$(i)$] For some $1<p<\infty$, there exists $C>0$ such that
            $$\|S^q_{\lambda,+}(f)\|_{L^p(0,\infty)} \leq C \|f\|_{L^p((0,\infty),\B)}, \quad f \in L^p((0,\infty),\B).$$
            \item[$(ii)$] For every $1<p<\infty$, there exists $C>0$ such that
            $$\|S^q_{\lambda,+}(f)\|_{L^p(0,\infty)} \leq C \|f\|_{L^p((0,\infty),\B)}, \quad f \in L^p((0,\infty),\B).$$
            \item[$(iii)$] There exists $C>0$ for which
            $$ \|S^q_{\lambda,+}(f)\|_{L^1(0,\infty)} \leq C \|f\|_{H^1_o(\mathbb{R},\B)}, \quad f \in H^1_o(\mathbb{R},\B).$$
            \item[$(iv)$] $\B$ has an equivalent $q$-uniformly convex norm. \\ \quad
            \item[$(v)$] $\B^*$, the dual space of $\B$, has an equivalent $q'$-uniformly smooth norm, where $q'=q/(q-1)$.
        \end{itemize}
    \end{Prop}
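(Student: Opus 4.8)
The plan is to prove the chain of equivalences by combining known equivalences with transference between the Bessel Poisson semigroup and the classical Poisson semigroup on $\mathbb R$. The equivalence $(iv)\Leftrightarrow(v)$ is standard duality (Pisier): $\B$ has an equivalent $q$-uniformly convex norm iff $\B^*$ has an equivalent $q'$-uniformly smooth norm, so nothing new is needed there. For the bulk of the argument, I would set up the implications $(i)\Rightarrow(ii)$, $(ii)\Rightarrow(iii)$, $(iii)\Rightarrow(iv)$ and $(iv)\Rightarrow(i)$ (or $(iv)\Rightarrow(ii)$), so that the circle closes.

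For $(iv)\Rightarrow(ii)$ I would invoke the characterization already available in the literature: by \cite{Xu} (for the torus/classical Poisson semigroup) and \cite{MTX} (diffusion semigroups), a Banach space $\B$ has an equivalent $q$-uniformly convex norm if and only if the classical Lusin area integral $S^q$ associated with the Poisson semigroup on $\mathbb R$ is bounded on $L^p(\mathbb R,\B)$ for all (equivalently some) $1<p<\infty$, and also maps $H^1(\mathbb R,\B)\to L^1(\mathbb R)$. The task then reduces to transferring the $L^p$ and $H^1$ estimates for $S^q_{\lambda,+}$ to and from those for the classical $S^q$. The standard device here is to split the Bessel Poisson kernel as
$$P_t^\lambda(x,y)=P_t(x-y)+R_t^\lambda(x,y),$$
where $P_t$ is the classical Poisson kernel on $\mathbb R$ and $R_t^\lambda$ is a remainder. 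One shows, using the explicit formula \eqref{Bessel_kernel} together with the known estimates for $P_t^\lambda$ and its $t$-derivative (as in \cite{BCC1}, \cite{BCFR2}, \cite{BFMT}), that the area integral built from $t\partial_t R_t^\lambda$ defines an operator bounded on $L^p((0,\infty),\B)$ and from $H^1_o$ to $L^1$ \emph{without any geometric assumption on $\B$} — this is the ``local'' part whose kernel enjoys better-than-Calder\'on--Zygmund decay near the diagonal and extra decay off it. This reduces matters to the classical Lusin integral applied to the odd extension $f_o$, restricted to $(0,\infty)$, for which \cite{Xu}, \cite{MTX} apply; the oddness and the symmetry \eqref{(*)} make the reduction to $H^1_o$, rather than the full $H^1$, harmless because the odd atoms in the description of $H^1_o(\mathbb R,\B)$ recalled above are (sums of) classical atoms or the special boundary atoms $b_j\chi_{(0,\delta_j)}/\delta_j$, and the latter are easily handled directly using the size of $P_t^\lambda(x,y)$ for $x,y$ small.

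For $(iii)\Rightarrow(iv)$ I would argue contrapositively or by duality: if $\B$ does not admit an equivalent $q$-uniformly convex norm, then $\B^*$ does not admit an equivalent $q'$-uniformly smooth norm, and one produces, via \cite{MTX} or \cite{Xu}, a failure of the corresponding lower/upper Lusin estimate for the classical semigroup, which transfers back (again using the kernel splitting, now in the reverse direction) to a failure of the $H^1_o\to L^1$ bound for $S^q_{\lambda,+}$. Alternatively, and perhaps more cleanly, one can establish $(iii)\Rightarrow(iv)$ by showing that boundedness of $S^q_{\lambda,+}$ on $H^1_o$ forces a vector-valued Littlewood--Paley $g$-function inequality of the type characterized in \cite{BFMT}, whose validity is equivalent to $q$-uniform convexity of a renorming of $\B$; the passage from the area integral $S^q_{\lambda,+}$ to the $g$-function $g^q_\lambda$ is the usual Fubini/averaging argument, using that $\int_{|y-x|<t}\,dy\simeq t$.

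The implications $(i)\Leftrightarrow(ii)$ are the expected ``one $p$ implies all $p$'' statement, handled by the vector-valued Calder\'on--Zygmund theory for the $\B$-valued (resp.\ $\ell^q_\B$-valued) singular integral whose kernel is $t\partial_t P_t^\lambda(x,y)$ viewed as taking values in $L^q((0,\infty),dt/t^2\text{ over the cone})$; the needed kernel bounds are exactly those already used in \cite{BCFR2} and \cite{BFMT}, so this step is routine once those estimates are in hand. Finally, $(ii)\Rightarrow(iii)$ follows from the atomic decomposition of $H^1_o(\mathbb R,\B)$: it suffices to bound $\|S^q_{\lambda,+}(a)\|_{L^1(0,\infty)}$ uniformly over $\infty$-atoms $a$ supported in some $I\subset(0,\infty)$ and over the boundary atoms $b\chi_{(0,\delta)}/\delta$; for $x$ in a fixed dilate of $\operatorname{supp}(a)$ one uses the $L^p$ bound from $(ii)$ together with H\"older's inequality, and for $x$ far away one uses the cancellation $\int a=0$ and the smoothness in $y$ of $t\partial_t P_t^\lambda(x,y)$ to gain the integrable tail, the boundary atoms being again directly estimated from the size of the kernel.

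The main obstacle I expect is the kernel analysis underpinning the transference: one must prove, with uniformity in the relevant parameters, that the ``remainder'' area operator coming from $t\partial_t(P_t^\lambda(x,y)-P_t(x-y))$ is bounded on $L^p((0,\infty),\B)$ and from $H^1_o(\mathbb R,\B)$ to $L^1(0,\infty)$ with \emph{no} geometric hypothesis on $\B$, so that all the geometry is concentrated in the classical piece. Extracting the precise pointwise bounds on $R_t^\lambda(x,y)$ and $\partial_t R_t^\lambda(x,y)$ — in particular the gain of one power of $\min(x,y)/\max(|x-y|,t)$ or $t/(x+y)$ off the diagonal that makes the remainder operator a nice Calder\'on--Zygmund-type operator — from the integral representation \eqref{Bessel_kernel} is the technical heart of the proof; the hypothesis $\lambda>0$ (and in the theorems $\lambda>1$) is used precisely to make these estimates and the associated integrals converge.
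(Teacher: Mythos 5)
Your proposal is essentially the same as the paper's: transfer the Bessel Lusin integral $S^q_{\lambda,+}$ to the classical one $S^q$ by comparing kernels and showing that the remainder/difference operators are bounded on $L^p$ and $H^1\to L^1$ with no geometric hypothesis on $\B$, then invoke the known classical Lusin-area characterization of $q$-uniform convexity (the paper uses \cite[Lemma 4.2]{OX}, the Lusin-integral form of what you quote from \cite{Xu}, \cite{MTX}). The paper organizes the reduction through the auxiliary operators $S^q_+$ and $S^q_{+,loc}$ before reaching $S^q_{\lambda,+}$, but the kernel-splitting, Hardy-operator estimates, and atomic/oddness arguments are all as you describe.
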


    This paper is organized as follows. In Section~\ref{sec:propo} we prove Proposition~\ref{Lem_principal}. Proofs of Theorems~\ref{Th_3.1_OX}
    and \ref{Th_4.1_OX} are presented in Sections~\ref{sec:proof1} and \ref{sec:proof2}, respectively. In order to make
    Sections~\ref{sec:proof1} and \ref{sec:proof2} more legible we include in Section~\ref{sec:appendix} (Appendix) the proofs of some auxiliary results
    that we need to show Theorems~\ref{Th_3.1_OX} and \ref{Th_4.1_OX}.\\

    Throughout this paper by $C$ we always denote a positive constant that is not necessarily the same in each occurrence. The duality pairing between
    a Banach space $\B$ and its dual $\B^*$ will be represented by $\langle \cdot , \cdot \rangle_{\B \times \B^*}$ or simply $\langle \cdot , \cdot \rangle$.\\

    \textbf{Acknowledgements}. The authors would like to thank Professor José Luis Torrea our always helpful discussions with him about vector
    valued harmonic analysis.

    \section{Proof of Proposition~\ref{Lem_principal}}\label{sec:propo}

    In order to proof Proposition~\ref{Lem_principal} we use \cite[Lemma~4.2]{OX} where the convexity and smoothness of a Banach space $\B$
    is described in terms of the boundedness properties of the Lusin area integral associated with the classical Poisson integral.\\

    If $f$ is a strongly measurable $\B$-valued function defined in $\mathbb{R}$ such that $\int_\mathbb{R} \|f(x)\|_\B/(1+x^2)dx<\infty$,
    and $q>1$ the $q$-Lusin area integral $S^q(f)$ is defined by
    $$S^q(f)(x)= \left( \int_{\G(x)} \left\| t \partial_t P_t(f)(y) \right\|_\B^q \frac{dt dy}{t^2} \right)^{1/q}, \quad x \in \mathbb{R},$$
    where $P_t(f)$ represents the Poisson integral of $f$ given as follows
    $$P_t(f)(y)=\int_\mathbb{R} P_t(y-z)f(z)dz, \quad y \in \mathbb{R}, \ t>0.$$
    As usual, we denote the Poisson kernel by
    $$P_t(z)=\frac{1}{\pi} \frac{t}{t^2+z^2}, \quad z \in \mathbb{R}, \ t>0.$$
    We also consider the following partial Lusin integrals
    $$S^q_+(f)(x)= \left( \int_{\G_+(x)} \left\| t \partial_t P_t(f)(y) \right\|_\B^q \frac{dt dy}{t^2} \right)^{1/q}, \quad x \in (0,\infty),$$
    and
    $$ S_{+,loc}^q(f)(x)= \left( \int_{\G_+(x)} \left\| t \partial_t P_t(f\chi_{(x/2,2x)})(y) \right\|_\B^q \frac{dt dy}{t^2} \right)^{1/q}, \quad x \in (0,\infty),$$
    with $q>1$.\\

    We prove Proposition~\ref{Lem_principal} in two steps. Firstly, we establish that the $L^p$-boundedness of $S^q$ is equivalent to the
    $L^p$-boundedness of $S^{q}_{\lambda,+}$, for every $1<p<\infty$.

    \begin{Lem}\label{Lem_Lp}
        Let $\B$ be a Banach space, $\lambda>0$, $2 \leq q < \infty$, and $1<p<\infty$. Then, the following assertions are equivalent.
        \begin{itemize}
            \item[$(i)$]    $S^q$ is bounded from $L^p(\mathbb{R},\B)$ into $L^p(\mathbb{R})$.\\
            \item[$(ii)$]   $S^q_+$ is bounded from $L^p(\mathbb{R},\B)$ into $L^p(0,\infty)$.\\
            \item[$(iii)$]  $S^q_{+,loc}$ is bounded from $L^p((0,\infty),\B)$ into $L^p(0,\infty)$.\\
            \item[$(iv)$]   $S^q_{\lambda,+}$ is bounded from $L^p((0,\infty),\B)$ into $L^p(0,\infty)$.\\
        \end{itemize}
    \end{Lem}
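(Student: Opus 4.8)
The plan is to prove the chain of equivalences $(i)\Leftrightarrow(ii)\Leftrightarrow(iii)\Leftrightarrow(iv)$ by a sequence of comparison arguments, all for a fixed $\lambda>0$, $2\le q<\infty$, $1<p<\infty$. The guiding principle is that the Bessel Poisson kernel $P_t^\lambda(x,y)$ and the classical Poisson kernel $P_t(x-y)$ behave alike when $x,y,t$ are of comparable size, and the discrepancy at different scales produces only harmless, pointwise bounded error terms. The implication $(i)\Rightarrow(ii)$ is trivial: restricting the Lusin cone from $\G(x)$ to $\G_+(x)$ and restricting $x$ to $(0,\infty)$ only decreases the integrals, so $S^q_+(f)(x)\le S^q(f)(x)$ for $x\in(0,\infty)$ and every $f$ supported where we like; one extends $f$ from $(0,\infty)$ to $\mathbb R$ by zero (or by its odd extension) without increasing the $L^p$ norm by more than a constant. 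The reverse direction $(ii)\Rightarrow(i)$ uses the odd symmetry: decompose a general $f$ on $\mathbb R$ into its restrictions to $(0,\infty)$ and $(-\infty,0)$, apply $(ii)$ to each piece after a reflection, and control the cone over $x<0$ by the cone over $-x>0$ using $P_t(y-z)=P_t(-y-(-z))$; the "crossing" contributions (where $y$ and $z$ have opposite signs, or the cone straddles $0$) are estimated by an elementary off-diagonal bound on $P_t$, since then $|y-z|\gtrsim |y|$ or $t\gtrsim|y|$.

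Next I would address $(ii)\Leftrightarrow(iii)$. The localized operator $S^q_{+,loc}$ replaces $f$ by $f\chi_{(x/2,2x)}$ inside the cone $\G_+(x)$. For $(y,t)\in\G_+(x)$ one has $t>|y-x|$ and $y\sim x$, so splitting $f=f\chi_{(x/2,2x)}+f\chi_{(0,\infty)\setminus(x/2,2x)}$, the "far" part contributes a kernel $P_t(y-z)$ with $|y-z|\gtrsim x\gtrsim t$ off the cone's reach, or more precisely $|y-z|\gtrsim\max(x,|z|)$; one bounds $t\partial_t P_t(y-z)$ by $Ct/(t^2+|y-z|^2)\le Ct/(t^2+x^2)$ times a decay factor in $z$, integrates in $(y,t)$ over the cone, and recognizes the resulting operator in $z$ as a Hardy-type / maximal operator bounded on $L^p$. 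This shows $S^q_+(f)\le S^q_{+,loc}(f)+C\,M(f)$-type control and conversely $S^q_{+,loc}(f)\le S^q_+(f)+(\text{similar error})$, giving the equivalence of the $L^p$ bounds. The key point is that the difference between the global and localized square functions is a \emph{positive} error operator with an explicitly integrable kernel, so no cancellation is needed there.

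Finally, $(iii)\Leftrightarrow(iv)$ is the heart of the matter and I expect it to be the main obstacle. Here one compares, for $(y,t)\in\G_+(x)$ and $z\in(x/2,2x)$, the kernels $t\partial_t P_t^\lambda(y,z)$ and $t\partial_t P_t(y-z)$. Using the integral representation \eqref{Bessel_kernel} and the elementary estimate $1-\cos\theta\sim\theta^2$ near $0$, one shows that on this "local" region
$$\bigl|\partial_t P_t^\lambda(y,z)-\partial_t P_t(y-z)\bigr|\le \frac{C}{x}\cdot\frac{1}{t+|y-z|}\cdot\frac{t}{t^2+|y-z|^2}$$
or a similar bound with a gain of a power of $t/x$, integrable against $dt\,dy/t^2$ over $\G_+(x)$ uniformly in $z\sim x$; this reduces the $L^p$-boundedness of $S^q_{+,loc}$ (the Bessel one) to that of its classical analogue, which is the operator $S^q_{+,loc}$ built from $P_t$, hence — by the already-established $(ii)\Leftrightarrow(iii)$ for the \emph{classical} kernel and the equivalence of these with $S^q$ — to the $L^p$-boundedness of the classical Lusin area integral $S^q$. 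By \cite[Lemma 4.2]{OX} the latter is equivalent to $\B$ having an equivalent $q$-uniformly convex norm. (The separately-stated $(v)$ follows from $(iv)$ by the standard duality $\delta_\B\leftrightarrow\rho_{\B^*}$ between moduli of convexity and smoothness, together with the fact that $q$-uniform convexity of $\B$ is equivalent to $q'$-uniform smoothness of $\B^*$; this does not involve the Bessel machinery.) The delicate part of the kernel comparison is making the difference estimate \emph{uniform} in the relevant parameters and checking that the $\theta$-integral in \eqref{Bessel_kernel}, which concentrates near $\theta=0$, genuinely produces the classical Poisson kernel plus a controlled remainder; I would isolate this as an auxiliary lemma (of the type deferred to the Appendix) rather than carry it out in line.
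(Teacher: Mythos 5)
Your chain $(i)\Leftrightarrow(ii)\Leftrightarrow(iii)\Leftrightarrow(iv)$ follows the same architecture as the paper, and the first two links are sound in spirit. For $(ii)\Rightarrow(i)$ the paper does something cleaner than your restriction-and-reflection scheme: it writes $f=f_o+f_e$ (odd plus even) and observes that $\|t\partial_t P_t(f_o)(y)\|_\B$ and $\|t\partial_t P_t(f_e)(y)\|_\B$ are even in $y$, whence $S^q(f_o)$ and $S^q(f_e)$ are even and $S^q(f_o)(x)\le 2^{1/q}S^q_+(f_o)(|x|)$; this sidesteps the crossing-term case analysis you outline. For $(ii)\Leftrightarrow(iii)$ the paper bounds the difference $|S^q_+(f)-S^q_{+,loc}(f)|$ by two kernel operators whose kernels, measured in $L^q(\G_+(x),dtdy/t^2)$, decay like $|x-z|^{-1}$ and $(x+z)^{-1}$ and are hence dominated by the Hardy operators $H_0,H_\infty$; your sketch of this is roughly right (though the phrase ``$|y-z|\gtrsim x\gtrsim t$'' is off, since $t$ is unbounded on $\G_+(x)$).

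The genuine gap is in $(iii)\Leftrightarrow(iv)$, precisely where you flagged the difficulty. The kernel estimate you propose,
\begin{equation*}
\bigl|\partial_t P_t^\lambda(y,z)-\partial_t P_t(y-z)\bigr|\lesssim \frac{1}{x}\cdot\frac{1}{t+|y-z|}\cdot\frac{t}{t^2+|y-z|^2},\qquad z\sim y\sim x,
\end{equation*}
is false and would be insufficient even if it held. It is false because the actual difference has a logarithmic singularity as $y\to z$: the paper (following \cite{BCFR2}) shows
\begin{equation*}
\bigl|P_{t,1}^\lambda(y,z)-P_t(y-z)\bigr|\le C\,\frac{t}{yz}\Bigl(1+\log\bigl(1+\tfrac{yz}{(y-z)^2}\bigr)\Bigr),
\end{equation*}
which blows up as $y\to z$ while your right-hand side stays bounded in $|y-z|$ for fixed $t$; no ``clean'' power gain in $t/x$ can dominate that logarithm. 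It would be insufficient because inserting your bound into $\|t\partial_t[\cdots]\|_{L^q(\G_+(x),\,dtdy/t^2)}$ yields roughly $C/(x|x-z|)$ for $z\sim x$, i.e.\ a nonintegrable $|x-z|^{-1}$ singularity on $(x/2,2x)$, which does not define an $L^p$-bounded operator on its own. What the paper actually does to handle $\mathcal K_3$ is to split $P_{t,1}^\lambda$ further via the intermediate kernels $P_{t,1,1}^\lambda$ and $P_{t,1,2}^\lambda$, obtain the log-weighted estimate
\begin{equation*}
\bigl\|t\partial_t[P_{t,1}^\lambda(y,z)-P_t(y-z)]\bigr\|_{L^q(\G_+(x),\,dtdy/t^2)}\le \frac{C}{z}\Bigl(1+\log_+\frac{z}{|x-z|}\Bigr),\qquad x/2<z<2x,
\end{equation*}
and then prove $L^p$-boundedness of $\mathcal K_3$ by Jensen's inequality against the \emph{finite} measure $\frac{1}{u}(1+\log_+\frac{u}{|1-u|})\,du$ on $(1/2,2)$. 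So the auxiliary lemma you defer would have to establish exactly this log-weighted bound, not a power gain; as stated, your plan for this step does not close.
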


    \begin{proof}

        $(i) \Rightarrow (ii)$. It is sufficient to note that $S_+^q(f)(x) \leq S^q(f)(x)$, $x \in (0,\infty)$, for every $f \in L^p(\mathbb{R},\B)$.\\

        $(ii) \Rightarrow (i)$. Let $f \in L^p(\mathbb{R},\B)$. We decompose $f$ as follows, $f=f_o + f_e$,
        where $f_o(x)=(f(x)-f(-x))/2$ and  $f_e(x)=(f(x)+f(-x))/2$, $x \in \mathbb{R}$. We have that
        $$P_t(f_o)(y) = \int_0^\infty [P_t(y-z)-P_t(y+z)]f_o(z)dz, \quad y \in \mathbb{R}, \ t>0,$$
        and
        $$P_t(f_e)(y) = \int_0^\infty [P_t(y-z)+P_t(y+z)]f_e(z)dz,  \quad y \in \mathbb{R}, \ t>0.$$
        For every $t>0$, the function $\|t\partial_t P_t(f_o)(y)\|_\B$ is even. Then, $S^q(f_o)$ is also an even function and
        \begin{equation*}\label{Sq-Sq+}
            S^q(f_o)(x) \leq 2^{1/q} S_+^q(f_o)(|x|), \quad x \in \mathbb{R}.
        \end{equation*}
        Hence, we get
        $$\|S^q(f_o)\|_{L^p(\mathbb{R})} \leq 2^{1/p+1/q} \|S^q_+(f_o)\|_{L^p(0,\infty)}.$$
        In a similar way we obtain
        $$\|S^q(f_e)\|_{L^p(\mathbb{R})} \leq 2^{1/p+1/q} \|S^q_+(f_e)\|_{L^p(0,\infty)}.$$
        The above inequalities allow us to show that $(ii)$ implies $(i)$.\\

        $(ii) \Leftrightarrow (iii)$. We are going to see that there exists $C>0$ such that
        \begin{equation}\label{2.1}
            \|S^q_+(f) - S^q_{+,loc}(f)\|_{L^p(0,\infty)} \leq C \|f\|_{L^p(\mathbb{R},\B)}, \quad f \in L^p(\mathbb{R},\B).
        \end{equation}
        Let $f \in L^p(\mathbb{R},\B)$. We can write
        $$P_t(f)(y)
            = \int_0^\infty P_t(y-z) f(z)dz + \int_0^\infty P_t(y+z) f(-z)dz, \quad y,t \in (0,\infty).$$
        By applying Minkowski's inequality we get
        \begin{align*}
            |S^q_+(f)(x) - S^q_{+,loc}(f)(x)|
                \leq & \left( \int_{\G_+(x)} \left\| t \partial_t \left(
                            \int_{\mathbb{R}} P_t(y-z)f(z)dz - \int_{x/2}^{2x} P_t(y-z)f(z)dz
                        \right) \right\|_\B^q \frac{dtdy}{t^2} \right)^{1/q} \\
                \leq & \mathcal{J}_1(\|f\|_\B)(x) + \mathcal{J}_2(\|\tilde{f}\|_\B)(x), \quad x \in (0,\infty),
        \end{align*}
        where $\tilde{f}(x)=f(-x)$, $x \in (0,\infty)$, and the operators $\mathcal{J}_i$, $i=1,2$, are defined by
        $$\mathcal{J}_1(g)(x) = \int_{(0,x/2) \cup (2x,\infty)} \left\| t \partial_t P_t(y-z) \right\|_{L^q\left(\G_+(x),\frac{dtdy}{t^2}\right)} g(z) dz, \quad x \in (0,\infty),$$
        and
        $$\mathcal{J}_2(g)(x) = \int_0^\infty \left\| t \partial_t P_t(y+z) \right\|_{L^q\left(\G_+(x),\frac{dtdy}{t^2}\right)} g(z) dz, \quad x \in (0,\infty).$$
        Our objective \eqref{2.1} will be established when we prove that the operators $\mathcal{J}_1$ and $\mathcal{J}_2$
        are bounded from $L^p(0,\infty)$ into itself.\\

        First, we observe that
        \begin{align} \label{bound1}
            & \left\| t \partial_t P_t(y-z) \right\|^q_{L^q\left(\G_+(x),\frac{dtdy}{t^2}\right)}
                = \int_{\G_+(x)} \left| t \partial_t P_t(y-z) \right|^q \frac{dt dy}{t^2}
                 \leq C \int_0^\infty \int_{|x-y|}^\infty \frac{t^{q-2}}{(|y-z|+t)^{2q}} dt dy \nonumber \\
            & \quad    \leq  C \int_0^\infty \int_{|x-y|}^\infty \frac{dt}{(|y-z|+t)^{q+2}} dy
                 \leq  C \left(\int_{I_{x,z}} \frac{dy}{(|y-z|+|x-y|)^{q+1}} + \int_{\mathbb{R} \setminus I_{x,z}} \frac{dy}{(|y-z|+|x-y|)^{q+1}} \right)  \nonumber\\
            & \quad     \leq  \frac{C}{|x-z|^q}, \quad x,z \in (0,\infty), \ x \neq z.
        \end{align}
        Here, $I_{x,z}$ represents the interval $(\min\{x,z\},\max\{x,z\})$. We also get
        \begin{align}\label{bound2}
            \left\| t \partial_t P_t(y+z) \right\|^q_{L^q\left(\G_+(x),\frac{dtdy}{t^2}\right)}
                & \leq C \left(\int_0^x + \int_x^\infty \right) \frac{dy}{(y+z+|x-y|)^{q+1}}
                 \leq \frac{C}{(x+z)^q}, \quad x,z \in (0,\infty).
        \end{align}
        These estimates lead to, for $i=1,2$,
        $$|\mathcal{J}_i(g)| \leq C \left(H_0(|g|) + H_\infty(|g|) \right),$$
        where $H_0$ and $H_\infty$ denote the Hardy type operators defined by
        $$H_0(g)(x)=\frac{1}{x}\int_0^x g(y) dy, \quad H_\infty(g)(x)=\int_x^\infty \frac{g(y)}{y} dy, \quad x \in (0,\infty).$$
        Since $H_0$ and $H_\infty$ are bounded from $L^p(0,\infty)$ into itself (see \cite{Mu}), we conclude that $\mathcal{J}_1$ and
        $\mathcal{J}_2$ are bounded from $L^p(0,\infty)$ into itself. Now the desired equivalence follows from \eqref{2.1}.\\

        $(iii) \Leftrightarrow (iv)$. This property will be proved when we show that there exists $C>0$ such that
        \begin{equation}\label{7.1}
            \|S^q_{+,loc}(f) - S^q_{\lambda,+}(f)\|_{L^p(0,\infty)} \leq C \|f\|_{L^p((0,\infty),\B)}, \quad f \in L^p((0,\infty),\B).
        \end{equation}
        We decompose the Bessel Poisson kernel as follows
        \begin{align}\label{7.2}
            P_t^\lambda(x,y)
                = & \frac{2\lambda (xy)^\lambda t}{\pi} \int_0^{\pi/2} \frac{(\sin \theta)^{2\lambda-1}}{[(x-y)^2+t^2+2xy(1-\cos \theta)]^{\lambda+1}}d\theta \nonumber \\
                & + \frac{2\lambda (xy)^\lambda t}{\pi} \int_{\pi/2}^\pi \frac{(\sin \theta)^{2\lambda-1}}{[(x-y)^2+t^2+2xy(1-\cos \theta)]^{\lambda+1}} d\theta \nonumber \\
                = & P_{t,1}^\lambda(x,y)+P_{t,2}^\lambda(x,y), \quad x,y,t \in (0,\infty).
        \end{align}
        By applying Minkowski's inequality we get
        \begin{equation}\label{2.3a}
            |S^q_{+,loc}(f) - S^q_{\lambda,+}(f)| \leq  \sum_{j=1}^3 \mathcal{K}_j(\|f\|_\B),
        \end{equation}
        where
        \begin{equation*}
            \mathcal{K}_1(g)(x) = \int_{(0,x/2) \cup (2x,\infty)} \left\| t \partial_t P_{t,1}^\lambda(y,z) \right\|_{L^q\left(\G_+(x),\frac{dtdy}{t^2}\right)} g(z) dz, \quad x \in (0,\infty),
        \end{equation*}
        \begin{equation*}
            \mathcal{K}_2(g)(x) = \int_0^\infty \left\| t \partial_t P_{t,2}^\lambda(y,z) \right\|_{L^q\left(\G_+(x),\frac{dtdy}{t^2}\right)} g(z) dz, \quad x \in (0,\infty),
        \end{equation*}
        and
        \begin{equation*}
            \mathcal{K}_3(g)(x) = \int_{x/2}^{2x} \left\| t \partial_t [P_{t,1}^\lambda(y,z)-P_t(y-z)] \right\|_{L^q\left(\G_+(x),\frac{dtdy}{t^2}\right)} g(z) dz, \quad x \in (0,\infty).
        \end{equation*}
        Our objective is to see that $\mathcal{K}_j$ is a bounded operator in $L^p(0,\infty)$, for $j=1,2,3$.\\

        According to \cite[p.  481--482]{BCFR2} we have that
        \begin{equation}\label{Pt1}
            |\partial_t P_{t,1}^\lambda(y,z)| \leq C \frac{z^\lambda}{(|z-y|+t)^{\lambda+2}}, \quad y,z,t \in (0,\infty).
        \end{equation}
        Also, we get
        \begin{equation}\label{Pt2}
            |\partial_t P_{t,2}^\lambda(y,z)| \leq C \frac{z^\lambda}{(z+y+t)^{\lambda+2}}, \quad y,z,t \in (0,\infty).
        \end{equation}

        By proceeding as in \eqref{bound1} and \eqref{bound2} we can see that
        \begin{equation}\label{||Pt1||}
            \left\| t \partial_t P_{t,1}^\lambda(y,z) \right\|_{L^q\left(\G_+(x),\frac{dtdy}{t^2}\right)}
                \leq C \frac{z^\lambda}{|x-z|^{\lambda+1}}, \quad x,z \in (0,\infty), \ x \neq z,
        \end{equation}
        and
        \begin{equation}\label{||Pt2||}
            \left\| t \partial_t P_{t,2}^\lambda(y,z) \right\|_{L^q\left(\G_+(x),\frac{dtdy}{t^2}\right)}
                \leq C \frac{z^\lambda}{(x+z)^{\lambda+1}}, \quad x,z \in (0,\infty).
        \end{equation}
        Then, we get that
        $$|\mathcal{K}_1(g)| + |\mathcal{K}_2(g)| \leq C (H_0(|g|)+H_\infty(|g|)).$$
        Therefore, $\mathcal{K}_1$ and $\mathcal{K}_2$ are bounded from $L^p(0,\infty)$ into itself.\\

        Next, we deal with the most involved operator $\mathcal{K}_3$. In order to do this we introduce the new kernels
        \begin{equation*}
            P_{t,1,1}^\lambda(y,z)=\frac{2\lambda (yz)^\lambda t}{\pi} \int_0^{\pi/2} \frac{\theta^{2\lambda-1}}{[(y-z)^2+t^2+2yz(1-\cos \theta)]^{\lambda+1}} d\theta, \quad y,z,t \in (0,\infty),
        \end{equation*}
        and
        \begin{equation*}
            P_{t,1,2}^\lambda(y,z)=\frac{2\lambda (yz)^\lambda t}{\pi} \int_0^{\pi/2} \frac{\theta^{2\lambda-1}}{[(y-z)^2+t^2+yz\theta^2]^{\lambda+1}} d\theta, \quad y,z,t \in (0,\infty).
        \end{equation*}
        We can write
        \begin{align*}
            |\mathcal{K}_3(g)(x)|
               \leq &   \int_{x/2}^{2x} \left\| t \partial_t [P_{t,1}^\lambda(y,z)-P_{t,1,1}^\lambda(y,z)] \right\|_{L^q\left(\G_+(x),\frac{dtdy}{t^2}\right)} |g(z)| dz \\
                    & + \int_{x/2}^{2x} \left\| t \partial_t [P_{t,1,1}^\lambda(y,z)-P_{t,1,2}^\lambda(y,z)] \right\|_{L^q\left(\G_+(x),\frac{dtdy}{t^2}\right)} |g(z)| dz \\
                    & + \int_{x/2}^{2x} \left\| t \partial_t [P_{t,1,2}^\lambda(y,z)-P_t(y-z)] \right\|_{L^q\left(\G_+(x),\frac{dtdy}{t^2}\right)} |g(z)| dz,
                    \quad x \in (0,\infty).
        \end{align*}
        By arguing like in \cite[p.  483--487]{BCFR2}, we deduce that, for each $x \in (0,\infty)$ and $x/2<z<2x$,
        \begin{itemize}
            \item $\displaystyle \left\| t \partial_t [P_{t,1}^\lambda(y,z)-P_{t,1,1}^\lambda(y,z)] \right\|_{L^q\left(\G_+(x),\frac{dtdy}{t^2}\right)}
                                    \leq   \frac{C}{z} \left( 1 + \log_+\frac{z}{|x-z|} \right),$ \\
            \item $\displaystyle \left\| t \partial_t [P_{t,1,1}^\lambda(y,z)-P_{t,1,2}^\lambda(y,z)] \right\|_{L^q\left(\G_+(x),\frac{dtdy}{t^2}\right)}
                                    \leq   \frac{C}{z} \left( 1 + \log_+\frac{z}{|x-z|} \right),$\\
            \item $\displaystyle \left\| t \partial_t [P_{t,1,2}^\lambda(y,z)-P_t(y-z)] \right\|_{L^q\left(\G_+(x),\frac{dtdy}{t^2}\right)}
                                    \leq   \frac{C}{z}.$ \\ \quad
        \end{itemize}
        There (in \cite[p.  483--487]{BCFR2}), the case $q=2$ is considered, but the same arguments are still valid for $2 \leq q < \infty$.\\

        Hence, we have that
        \begin{equation}\label{2.8}
            |\mathcal{K}_3(g)(x)|
                \leq C \int_{x/2}^{2x} \frac{1}{z} \left( 1 + \log_+\frac{z}{|x-z|} \right) |g(z)| dz, \quad x \in (0,\infty).
        \end{equation}
        We denote $\displaystyle C_0= \int_{1/2}^2 \frac{1}{u}\left( 1 + \log_+\frac{u}{|1-u|} \right) du$. By using Jensen's inequality we get
        \begin{align*}
            |\mathcal{K}_3(g)(x)|^p
                \leq & C \left( \int_{1/2}^1 \frac{1}{u}\left( 1 + \log_+\frac{u}{|1-u|} \right)  |g(xu)|du \right)^p \\
                \leq & C C_0^{p-1} \int_{1/2}^1 \frac{1}{u}\left( 1 + \log_+\frac{u}{|1-u|} \right) |g(xu)|^p du, \quad x \in (0,\infty).
        \end{align*}
        Then,
        $$\|\mathcal{K}_3(g)\|_{L^p(0,\infty)}
            \leq C \|g\|_{L^p(0,\infty)}, \quad g \in L^p(0,\infty).$$
        Putting together the above estimations we obtain \eqref{7.1}, and the proof of $(iii) \Leftrightarrow (iv)$ is finished.
    \end{proof}

    In the following lemma we establish the endpoint result $p=1$.

    \begin{Lem}\label{Lem_L1}
        Let $\B$ be a Banach space, $\lambda>0$ and $2 \leq q < \infty$. Then, the following statements are equivalent.
        \begin{itemize}
            \item[$(i)$] $\displaystyle \|S^q(f)\|_{L^1(\mathbb{R})} \leq C \|f\|_{H^1(\mathbb{R},\B)}, \quad f \in H^{1}(\mathbb{R},\B),$ \\
            \item[$(ii)$] $\displaystyle \|S^q_{\lambda,+}(f)\|_{L^1(0,\infty)} \leq C \|f\|_{H^1_o(\mathbb{R},\B)}, \quad f \in H^1_o(\mathbb{R},\B).$ \\ \quad
        \end{itemize}
    \end{Lem}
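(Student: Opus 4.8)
The plan is to deduce Lemma~\ref{Lem_L1} from a single \emph{unconditional} comparison between the Bessel and the classical Lusin area integrals, so that the geometry of $\B$ enters only through the two hypotheses. If $f$ is odd, its classical Poisson integral $P_t(f)(y)=\int_0^\infty[P_t(y-z)-P_t(y+z)]f(z)\,dz$ is odd in $y$, so $y\mapsto\|t\partial_t P_t(f)(y)\|_\B$ is even, and arguing as in the proof of Lemma~\ref{Lem_Lp} one gets $\|S^q(f)\|_{L^1(\mathbb R)}\simeq\|S^q_+(f)\|_{L^1(0,\infty)}$. Introduce the sublinear operator
$$\mathcal D(f)(x)=\Big(\int_{\G_+(x)}\big\|t\,\partial_t\big(P_t^\lambda(f)(y)-P_t(f)(y)\big)\big\|_\B^q\,\frac{dt\,dy}{t^2}\Big)^{1/q},\qquad x\in(0,\infty).$$
Minkowski's inequality gives $|S^q_{\lambda,+}(f)(x)-S^q_+(f)(x)|\le\mathcal D(f)(x)$, so, together with $\|S^q(f)\|_{L^1(\mathbb R)}\simeq\|S^q_+(f)\|_{L^1(0,\infty)}$ and $\|f\|_{H^1(\mathbb R,\B)}\simeq\|f\|_{H^1_o(\mathbb R,\B)}$ for odd $f$, the lemma will follow once we prove that $\mathcal D$ maps $H^1_o(\mathbb R,\B)$ boundedly into $L^1(0,\infty)$. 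Indeed this gives $(i)\Rightarrow(ii)$ immediately, and it gives $(ii)\Rightarrow$ ``$S^q$ is bounded from $H^1_o(\mathbb R,\B)$ into $L^1(\mathbb R)$''; but by \cite[Lemma~4.2]{OX} the latter forces $\B$ to admit an equivalent $q$-uniformly convex norm (the obstruction to $q$-uniform convexity being already visible on odd functions), and \cite[Lemma~4.2]{OX} then yields $(i)$.

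To bound $\mathcal D$, by subadditivity and the atomic descriptions of $H^1_o(\mathbb R,\B)$ recalled in Section~\ref{sec:intro} it suffices to show $\|\mathcal D(a)\|_{L^1(0,\infty)}\le C$ uniformly over the admissible atoms, i.e. over $(\alpha)$ an $\infty$-atom $a$ with $\supp a\subset I=(c,d)\subset(0,\infty)$, and $(\beta)$ $a=b\,\chi_{(0,\delta)}/\delta$ with $\|b\|_\B=1$ and $\delta>0$. Fix such an $a$ and split $(0,\infty)$, for each $z\in\supp a$, into the set $\{x:z\in(x/2,2x)\}$ and its complement. On the first set I would write $P_t^\lambda=P_{t,1}^\lambda+P_{t,2}^\lambda$ as in \eqref{7.2} and decompose $P_t^\lambda(y,z)-P_t(y-z)+P_t(y+z)=[P_{t,1}^\lambda(y,z)-P_t(y-z)]+P_{t,2}^\lambda(y,z)+P_t(y+z)$; the three-step comparison $P_{t,1}^\lambda\to P_{t,1,1}^\lambda\to P_{t,1,2}^\lambda\to P_t(y-z)$ carried out in \cite[pp.~483--487]{BCFR2} (there for $q=2$; the same computations work for $2\le q<\infty$, as already used for $\mathcal K_3$ in the proof of Lemma~\ref{Lem_Lp}), together with the estimates \eqref{||Pt2||} and \eqref{bound2}, yields, for $z\in(x/2,2x)$,
$$\big\|t\,\partial_t\big(P_t^\lambda(y,z)-P_t(y-z)+P_t(y+z)\big)\big\|_{L^q(\G_+(x),\,dt\,dy/t^2)}\le\frac Cz\Big(1+\log_+\frac z{|x-z|}\Big).$$
Integrating this against $\|a(z)\|_\B\,dz$ over $z\in(x/2,2x)$ and then in $x$ over $(0,\infty)$, and exchanging the order of integration exactly as for $\mathcal K_3$ in the proof of Lemma~\ref{Lem_Lp}, this part of $\|\mathcal D(a)\|_{L^1(0,\infty)}$ is $\le C\|a\|_{L^1((0,\infty),\B)}\le C$.

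On the complementary set $\{x:z\notin(x/2,2x)\}$ one has $|x-z|\ge\tfrac12\max(x,z)$ for $z\in\supp a$, and here I would not split $P_t^\lambda$: by the triangle inequality it is enough to bound $\|t\partial_t P_t^\lambda(y,z)\|_{L^q(\G_+(x),dtdy/t^2)}$ and $\|t\partial_t(P_t(y-z)-P_t(y+z))\|_{L^q(\G_+(x),dtdy/t^2)}$ separately. For the first I would use the pointwise bound $|t\partial_t P_t^\lambda(y,z)|\le C\min(y,z)^\lambda\,t\,(|y-z|+t)^{-\lambda-2}$, which follows from \eqref{Pt1}--\eqref{Pt2} and the symmetry of the kernels; for the second, the exact identity $P_t(y-z)-P_t(y+z)=4tyz\,\pi^{-1}(t^2+(y-z)^2)^{-1}(t^2+(y+z)^2)^{-1}$ supplies an extra factor $\min(x,z)/\max(x,z)$. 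A routine case distinction in $t$ (small versus large compared with $\max(x,z)$) then gives, for $x>2z$, the bounds $Cz^\lambda x^{-\lambda-1}$ and $Cz\,x^{-2}$, and, for $x<z/2$, bounds of the form $Cz^{-1}$ (independent of $x$); integrating against $\|a(z)\|_\B\,dz$ and in $x$ --- over $(2z,\infty)$ in the first case, over the interval $(0,z/2)$ of length $z/2$ in the second --- produces in each case a constant times $\|a\|_{L^1((0,\infty),\B)}$. The convergence of the $x$-integrals over $(2z,\infty)$ is where $\lambda>0$ is used, and no cancellation of $a$ is needed anywhere: the gain needed for $x$ large is already built into the odd kernel $P_t(y-z)-P_t(y+z)$, and the one needed for $x$ small into the factors $\min(y,z)^\lambda$ and $yz$.

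The point I expect to be the main obstacle is the delicate three-kernel comparison of \cite[pp.~483--487]{BCFR2} underlying the local estimate of the second paragraph: one must make sure that the computations there, performed for $q=2$, really do go through for every $2\le q<\infty$ --- this is merely asserted in the proof of Lemma~\ref{Lem_Lp}. The remaining ingredients --- the case distinction for the complementary set, the $L^1(0,\infty)$-boundedness of $H_0$ and $H_\infty$ (with the gain coming from the weights $\min(x,z)^\lambda$, $\lambda>0$) and of the logarithmic Hardy operator, and, for the direction $(ii)\Rightarrow(i)$, the reduction of the $H^1(\mathbb R,\B)$-statement to the odd case via \cite[Lemma~4.2]{OX} --- are routine.
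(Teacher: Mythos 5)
Your argument for $(i)\Rightarrow(ii)$ is sound and is, in substance, the paper's own route: the single sublinear operator $\mathcal D$ packages the two successive gap operators $S^q_+-S^q_{+,loc}$ and $S^q_{+,loc}-S^q_{\lambda,+}$ that the paper bounds separately in the chain $(a)\Leftrightarrow(b)\Leftrightarrow(c)\Leftrightarrow(d)$; your local comparison $\frac{C}{z}\bigl(1+\log_+\frac{z}{|x-z|}\bigr)$ for $z\in(x/2,2x)$ is exactly the paper's bound for $\mathcal K_3$ combined with those for $\mathcal K_2$ and $\mathcal H_2$, and the far-range bounds of the form $Cz^\lambda|x-z|^{-\lambda-1}$ and $Cz|x-z|^{-2}$ (the latter using the exact identity $P_t(y-z)-P_t(y+z)=4tyz\pi^{-1}(t^2+(y-z)^2)^{-1}(t^2+(y+z)^2)^{-1}$, which is correct and slightly sharper than the paper's $C\sqrt z|x-z|^{-3/2}$) match the paper's \eqref{||Pt1||}, \eqref{||Pt2||} and the $\mathcal H_1$ estimate. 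Your observation that no atomic cancellation is needed here is also the paper's: all these difference operators are in fact $L^1((0,\infty),\B)\to L^1(0,\infty)$ bounded on odd functions.

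The gap is in $(ii)\Rightarrow(i)$. You correctly get that $(ii)$ plus the bound on $\mathcal D$ yields ``$S^q$ is bounded from $H^1_o(\mathbb R,\B)$ into $L^1(\mathbb R)$'', but you then claim ``by \cite[Lemma~4.2]{OX} the latter forces $\B$ to admit an equivalent $q$-uniformly convex norm (the obstruction being already visible on odd functions).'' This is not what \cite[Lemma~4.2]{OX} says: that lemma equates $q$-uniform convexifiability with boundedness of $S^q$ on the full $H^1(\mathbb R,\B)$ (and on $L^p$), not on the proper closed subspace $H^1_o(\mathbb R,\B)$. Your parenthetical is asserting a genuine sharpening of \cite[Lemma~4.2]{OX} for which you give no argument, and which you even describe at the end as ``routine''; it is not. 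The paper avoids this altogether by exploiting a structural feature of the Bessel side that your unified $\mathcal D$ discards: $S^q_{\lambda,+}(f)$ depends only on $f|_{(0,\infty)}$, so for an even $h\in H^1_e(\mathbb R,\B)$ one has $S^q_{\lambda,+}(h)=S^q_{\lambda,+}(H)$ where $H$ is the odd extension of $h|_{(0,\infty)}$, and $\|H\|_{H^1(\mathbb R,\B)}\le C\|h\|_{H^1(\mathbb R,\B)}$. This reduces the even case on the Bessel side to $(ii)$ directly, and the price is a separate comparison $(b')\Leftrightarrow(c')$ for the classical operator on even functions, which is genuinely different: since $P_t(h)(y)=\int_0^\infty[P_t(y-z)+P_t(y+z)]h(z)\,dz$ for even $h$, the favorable cancellation in $P_t(y-z)-P_t(y+z)$ is absent, and the paper must use the zero mean of even atoms (subtracting the term $t\partial_t P_t(y+\beta)$, estimate \eqref{y+beta}) to make the kernel integrable. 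Your proposal omits this entire part. Either supply a proof that boundedness on $H^1_o$ alone already forces $q$-uniform convexifiability (for instance by a translation/limiting argument), or replace the geometric detour by the paper's odd-extension reduction plus the even-atom comparison; as written, the argument for $(ii)\Rightarrow(i)$ is incomplete.
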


    \begin{proof}
        $(i) \Rightarrow (ii)$. We claim that the properties below are equivalent. \\
        \begin{itemize}
            \item[$(a)$] $\displaystyle \|S^q(f)\|_{L^1(\mathbb{R})} \leq C \|f\|_{H^1_o(\mathbb{R},\B)}, \quad f \in H^1_o(\mathbb{R},\B).$ \\
            \item[$(b)$] $\displaystyle \|S^q_+(f)\|_{L^1(0,\infty)} \leq C \|f\|_{H^1_o(\mathbb{R},\B)}, \quad f \in H^1_o(\mathbb{R},\B).$ \\
            \item[$(c)$] $\displaystyle \|S^q_{+,loc}(f)\|_{L^1(0,\infty)} \leq C \|f\|_{H^1_o(\mathbb{R},\B)}, \quad f \in H^1_o(\mathbb{R},\B).$ \\
            \item[$(d)$] $\displaystyle \|S^q_{\lambda,+}(f)\|_{L^1(0,\infty)} \leq C \|f\|_{H^1_o(\mathbb{R},\B)}, \quad f \in H^1_o(\mathbb{R},\B).$ \\ \quad
        \end{itemize}
        This implies what we are looking for.\\

        $(a) \Leftrightarrow (b)$. This equivalence can be proved by proceeding as in the proof of
        $(i) \Leftrightarrow (ii)$ of Lemma~\ref{Lem_Lp}.\\

        $(b) \Leftrightarrow (c)$. Let $f \in H^1_o(\mathbb{R},\B)$. Since $f$ is odd, Minkowski's inequality implies that
        \begin{align*}
            |S^q_+(f)(x)-S^q_{+,loc}(f)(x)|
                \leq \mathcal{H}_1(\|f\|_\B)(x) + \mathcal{H}_2(\|f\|_\B)(x), \quad x \in (0,\infty),
        \end{align*}
        where
        \begin{equation*}
            \mathcal{H}_1(g)(x) = \int_{(0,x/2) \cup (2x,\infty)} \left\| t \partial_t [P_t(y-z)-P_t(y+z)] \right\|_{L^q\left(\G_+(x),\frac{dtdy}{t^2}\right)} g(z) dz, \quad x \in (0,\infty),
        \end{equation*}
        and
        \begin{equation*}
            \mathcal{H}_2(g)(x) = \int_{x/2}^{2x} \left\| t \partial_t P_t(y+z) \right\|_{L^q\left(\G_+(x),\frac{dtdy}{t^2}\right)} g(z) dz, \quad x \in (0,\infty).
        \end{equation*}
        A straightforward manipulation leads to
        \begin{align*}
            \left| \partial_t[P_t(z-y)-P_t(z+y)] \right|
                \leq & C \left( \frac{zy}{((z-y)^2+t^2)((z+y)^2+t^2)} + \frac{zyt^2 (z^2+y^2+t^2)}{((z-y)^2+t^2)^2((z+y)^2+t^2)^2} \right) \\
                \leq & C \frac{zy}{((z-y)^2+t^2)((z+y)^2+t^2)}
                \leq   C \frac{\sqrt{z}}{(|z-y|+t)^2\sqrt{z+y+t}} \\
                \leq & C \frac{\sqrt{z}}{(|z-y|+t)^{5/2}}, \quad y,z,t \in (0,\infty).
        \end{align*}
        Hence, by proceeding as in \eqref{bound1} we obtain
        \begin{align*}
            \int_{\G_+(x)}  \left|t \partial_t[P_t(z-y)-P_t(z+y)] \right|^q \frac{dtdy}{t^2}
                \leq & C \int_{\G_+(x)} \frac{z^{q/2}t^{q-2}}{(|z-y|+t)^{5q/2}} dt dy \\
                \leq & C \frac{z^{q/2}}{|x-z|^{3q/2}}, \quad x,z \in (0,\infty), \ x \neq z.
        \end{align*}
        Then, we get
        $$\left\| t \partial_t [P_t(y-z)-P_t(y+z)] \right\|_{L^q\left(\G_+(x),\frac{dtdy}{t^2}\right)}
            \leq C \frac{\sqrt{z}}{|x-z|^{3/2}}, \quad x,z \in (0,\infty), \ x \neq z.$$
        Therefore,
        $$\| \mathcal{H}_1(g)  \|_{L^1(0,\infty)}
            \leq C \|g\|_{L^1(0,\infty)}, \quad g \in L^1(0,\infty).$$
        Moreover, according to \eqref{bound2}, we have that
        $$|\mathcal{H}_2(g)(x)|
            \leq C \int_{x/2}^{2x} \frac{|g(z)|}{z} dz, \quad x \in (0,\infty),$$
        and it follows that
        $$\| \mathcal{H}_2(g)  \|_{L^1(0,\infty)}
            \leq C \|g\|_{L^1(0,\infty)}, \quad g \in L^1(0,\infty).$$
        Hence, we conclude that
        $$ \| S^q_+(f)-S^q_{+,loc}(f) \|_{L^1(0,\infty)}
            \leq C \|f\|_{L^1((0,\infty),\B)}
            \leq C \|f\|_{H^1_o(\mathbb{R},\B)}.$$

        \quad

        $(c) \Leftrightarrow (d)$. Let $f \in H^1_o(\mathbb{R},\B)$. By \eqref{2.3a} we have to analyze the operators  $\mathcal{K}_j$, $j=1,2,3$.
        From \eqref{||Pt1||} it follows that
        \begin{align*}
            \int_0^\infty \mathcal{K}_1(\|f\|_\B)&(x) dx
                \leq  C \int_0^\infty \int_{(0,x/2) \cup (2x,\infty)} \frac{z^\lambda}{|x-z|^{\lambda+1}} \|f(z)\|_\B dz dx \\
                \leq & C \int_0^\infty \|f(z)\|_\B \left(\int_0^{z/2} \frac{z^\lambda}{|x-z|^{\lambda+1}} dx + \int_{2z}^\infty \frac{z^\lambda}{|x-z|^{\lambda+1}} dx\right) dz
                \leq  C \int_0^\infty \|f(z)\|_\B  dz,
        \end{align*}
        and from \eqref{||Pt2||} we deduce, in a similar way, that
        $$\int_0^\infty \mathcal{K}_2(\|f\|_\B)(x) dx
            \leq C \int_0^\infty \|f(z)\|_\B  dz.$$
        Finally, \eqref{2.8} implies that
        \begin{align*}
            \int_0^\infty \mathcal{K}_3(\|f\|_\B)(x) dx
                \leq & C \int_0^\infty \int_{x/2}^{2x} \frac{1}{z} \left( 1 + \log_+\frac{z}{|x-z|} \right) \|f(z)\|_\B dz dx \\
                \leq & C \int_0^\infty \|f(z)\|_\B \int_{z/2}^{2z} \frac{1}{z} \left( 1 + \log_+\frac{z}{|x-z|} \right) dx dz
                \leq  C \int_0^\infty \|f(z)\|_\B  dz.
        \end{align*}
        By combining the above estimates we get
        $$\|S^q_{+,loc}(f) - S^q_{\lambda,+}(f)\|_{L^1(0,\infty)}
            \leq C \|f\|_{L^1((0,\infty),\B)}
            \leq C \|f\|_{H^1_o(\mathbb{R},\B)}.$$
        Thus $(i) \Rightarrow (ii)$ is established.\\

        $(ii) \Rightarrow (i)$. Let $f \in H^1(\mathbb{R},\B)$. We write $f=f_o+f_e$, where
        $$f_o(x)=\frac{f(x)-f(-x)}{2}, \quad f_e(x)=\frac{f(x)+f(-x)}{2}, \quad x \in \mathbb{R}.$$
        It is clear that $f_o$, $f_e \in H^1(\mathbb{R},\B)$. Moreover
        $\|f_o\|_{H^1(\mathbb{R},\B)} \leq \|f\|_{H^1(\mathbb{R},\B)}$ and $\|f_e\|_{H^1(\mathbb{R},\B)} \leq \|f\|_{H^1(\mathbb{R},\B)}$.
        Assume that $(ii)$ holds. Since $f_o \in H^1_o(\mathbb{R},\B)$, by using that $(d) \Leftrightarrow (a)$, we get
        \begin{equation}\label{2.9}
            \|S^q(f_o)\|_{L^1(\mathbb{R})}
                \leq C \|f_o\|_{H^1(\mathbb{R},\B)}.
        \end{equation}
        We define $H^1_e(\mathbb{R},\B)$ the space constituted by  all those even functions in $H^1(\mathbb{R},\B)$. We consider the properties
        $(a')$, $(b')$, $(c')$ and $(d')$ that are analogous to $(a)$, $(b)$, $(c)$ and $(d)$, respectively, but replacing $H^1_o(\mathbb{R},\B)$
        by $H^1_e(\mathbb{R},\B)$. By proceeding as in the odd case we can see that $(a') \Leftrightarrow (b')$ and
        $(c') \Leftrightarrow (d')$. We are going to see that $(b') \Leftrightarrow (c')$.\\

        Suppose that $h \in H^1_e(\mathbb{R},\B)$ and that $h = \sum_{j=1}^\infty \lambda_j a_j$, where, for each $j \in \mathbb{N}$, $a_j$ is a
        $H^1(\mathbb{R},\B)$-atom, and $\{\lambda_j\}_{j=1}^\infty \subset \mathbb{C}$ is such that $\sum_{j=1}^\infty |\lambda_j|<\infty$. Then,
        $h=\sum_{j=1}^\infty \lambda_j (a_j+\tilde{a}_j)/2$, being $\tilde{a}_j(x)=a_j(-x)$, $x \in \mathbb{R}$, and $j \in \mathbb{N}$. We define
        $b_j$ and $\gamma_j$, $j \in \mathbb{N}$, as follows
        \begin{itemize}
            \item $b_j=a_j$ and $\gamma_j=\lambda_j/2$, provided that $\supp(a_j) \subset [0,\infty),$
            \item $b_j=\tilde{a}_j$ and $\gamma_j=\lambda_j/2$, when $\supp(a_j) \subset (-\infty,0],$
            \item if $\supp(a_j) \cap (0,\infty) \neq \varnothing$ and $\supp(a_j) \cap (-\infty,0) \neq \varnothing$, then
                  $b_j=\chi_{(0,\infty)}(a_j+\tilde{a}_j)/2$ and $\gamma_j=\lambda_j$.
        \end{itemize}
        Thus, $b_j$ is a $H^1(\mathbb{R},\B)$-atom. Indeed, in the two first cases it is clear. Assume now that $b_j=\chi_{(0,\infty)}(a_j+\tilde{a}_j)/2$
        where  $\supp(a_j) \cap (0,\infty) \neq \varnothing$, $\supp(a_j) \cap (-\infty,0) \neq \varnothing$, $\supp(a_j) \subset [-\alpha,\beta]$
        being $0<\alpha<\beta$ (similarly, $0<\beta<\alpha$), and $\|a_j\|_{L^\infty(\mathbb{R},\B)} \leq 1/(\beta+\alpha)$. Then, $\supp(b_j) \subset [0,\beta]$,
        $\|b_j\|_{L^\infty(\mathbb{R},\B)} \leq \|a_j\|_{L^\infty(\mathbb{R},\B)} \leq 1/(\beta+\alpha) \leq 1/\beta$, and
        $$\int_{\mathbb{R}} b_j(x) dx
            = \frac{1}{2}\int_{-\alpha}^\beta a_j(x) dx =0.$$

        From now on we write $h = \sum_{j=1}^\infty 2\gamma_j g_j$, where $g_j(x)=b_j(|x|)/2$, $x \in \mathbb{R}$ and $j \in \mathbb{N}$, and $b_j$
        and $\gamma_j$, $j \in \mathbb{N}$, are those ones we have just defined.\\

        We can write
        \begin{align*}
            & |S^q_+(h)(x)  -  S^q_{+,loc}(h)(x)|
                \leq  \left( \int_{\G_+(x)} \left\| t\partial_t \left(
                     \int_{\mathbb{R}} P_t(y-z) h(z) dz - \int_{x/2}^{2x}  P_t(y-z) h(z) dz
                     \right)\right\|^q_\B \frac{dtdy}{t^2} \right)^{1/q} \\
                & \quad  \leq  2\sum_{j=1}^\infty |\gamma_j| \left( \int_{\G_+(x)} \left\| t\partial_t \left(
                     \int_{\mathbb{R}} P_t(y-z) g_j(z) dz - \int_{x/2}^{2x}  P_t(y-z) g_j(z) dz
                     \right)\right\|^q_\B \frac{dtdy}{t^2} \right)^{1/q}, \ x \in (0,\infty).
        \end{align*}

        Assume that $g$ is an even $H^1(\mathbb{R},\B)$-atom such that $\supp(g) \subset [-\beta,\beta]$ with $\beta>0$ and $\|g\|_{L^\infty(\mathbb{R},\B)}\leq 1/2\beta$.
        We have that
        $$\int_{\mathbb{R}} P_t(y-z) g(z) dz
            = \int_0^\beta [P_t(y-z) + P_t(y+z)]g(z) dz, \quad y \in \mathbb{R}.$$
        For every $x \in (0,\infty)$, by using Minkowski's inequality we get
        \begin{align*}
            & \left( \int_{\G_+(x)} \left\| t\partial_t \left( \int_{\mathbb{R}} P_t(y-z) g(z) dz - \int_{x/2}^{2x}  P_t(y-z) g(z) dz
                                  \right)\right\|^q_\B \frac{dtdy}{t^2} \right)^{1/q}\\
                & \qquad = \left( \int_{\G_+(x)} \left\| t\partial_t \left( \int_{(0,x/2)\cup(2x,\infty)} P_t(y-z) g(z) dz + \int_{0}^\infty P_t(y+z) g(z) dz
                                                 \right)\right\|^q_\B \frac{dtdy}{t^2} \right)^{1/q} \\
                & \qquad\leq  \chi_{(0,2\beta)}(x) \int_0^{x/2} \left\| t \partial_t P_t(y-z) \right\|_{L^q\left(\G_+(x),\frac{dtdy}{t^2}\right)} \|g(z)\|_\B dz \\
                & \qquad \qquad + \chi_{(2\beta, \infty)}(x) \int_0^{\beta}   \left\|t \partial_t [P_t(y-z)-P_t(y+\beta)]   \right\|_{L^q\left(\G_+(x),\frac{dtdy}{t^2}\right)} \|g(z)\|_\B dz\\
                & \qquad \qquad + \int_{2x}^\infty \left\| t \partial_t P_t(y-z) \right\|_{L^q\left(\G_+(x),\frac{dtdy}{t^2}\right)} \|g(z)\|_\B dz \\
                & \qquad \qquad + \chi_{(0,2\beta)}(x) \int_0^{x/2} \left\| t \partial_t P_t(y+z) \right\|_{L^q\left(\G_+(x),\frac{dtdy}{t^2}\right)} \|g(z)\|_\B dz \\
                & \qquad \qquad  + \chi_{(2\beta, \infty)}(x)  \int_0^{\beta}  \left\|t \partial_t [P_t(y+z)-P_t(y+\beta)]   \right\|_{L^q\left(\G_+(x),\frac{dtdy}{t^2}\right)} \|g(z)\|_\B dz\\
                & \qquad \qquad  + \int_{x/2}^\infty \left\| t \partial_t P_t(y+z) \right\|_{L^q\left(\G_+(x),\frac{dtdy}{t^2}\right)} \|g(z)\|_\B dz \\
            &\qquad =      \sum_{i=1}^6 \mathpzc{H}_{i}(\|g\|_\B)(x).
        \end{align*}
        Note that it is possible to introduce the factor $t \partial_t P_t(y+\beta)$, because $g$ is even and has zero mean.\\

        Our goal is to see that, for a certain $C>0$ independent of $g$,
        \begin{equation*} 
            \| \mathpzc{H}_{i}(\|g\|_\B) \|_{L^1(0,\infty)}
                \leq C, \quad i=1, \dots, 6.
        \end{equation*}

        According to \eqref{bound1} it follows that
        \begin{align*}
            \mathpzc{H}_{1}(\|g\|_\B)(x)
                \leq & C \chi_{(0,2\beta)}(x) \int_0^{x/2} \frac{\|g(z)\|_\B}{x-z} dz
                \leq C \frac{\chi_{(0,2\beta)}(x)}{\beta}, \quad x \in (0,\infty),
        \end{align*}
        and then
        \begin{equation}\label{2.10}
            \| \mathpzc{H}_{1}(\|g\|_\B) \|_{L^1(0,\infty)}
                \leq C.
        \end{equation}
        In a similar way \eqref{bound2} leads to
        \begin{equation}\label{2.11}
            \| \mathpzc{H}_{4}(\|g\|_\B) \|_{L^1(0,\infty)}
                \leq C.
        \end{equation}

        By using again \eqref{bound1} and \eqref{bound2} we obtain
        \begin{align*}
            \mathpzc{H}_{3}(\|g\|_\B)(x) + \mathpzc{H}_{6}(\|g\|_\B)(x)
                \leq & C  \int_{x/2}^\infty \frac{\|g(z)\|_\B}{z} dz, \quad x \in (0,\infty).
        \end{align*}
        Since the Hardy operator $H_\infty$ is bounded from $L^1(0,\infty)$ into itself, we conclude that
        \begin{equation}\label{2.12}
            \| \mathpzc{H}_{3}(\|g\|_\B) \|_{L^1(0,\infty)} + \| \mathpzc{H}_{6}(\|g\|_\B) \|_{L^1(0,\infty)}
                \leq C \int_0^\beta \|g(z)\|_\B dz
                \leq C.
        \end{equation}

        In order to analyze $\mathpzc{H}_{j}(\|g\|_\B)$, $j=2,5$, we claim that
        \begin{equation}\label{y+beta}
            \left\|t \partial_t [P_t(y \pm z)-P_t(y+\beta)]   \right\|_{L^q\left(\G_+(x),\frac{dtdy}{t^2}\right)}
                \leq C \frac{\beta}{|x-z|^2}, \quad x \in (0,\infty), \ 0<z<\beta \text{ and } x \neq z.
        \end{equation}
        If \eqref{y+beta} holds we obtain
        \begin{align}\label{2.14}
            \| \mathpzc{H}_{2}(\|g\|_\B) \|_{L^1(0,\infty)} + \| \mathpzc{H}_{5}(\|g\|_\B) \|_{L^1(0,\infty)}
                \leq & C  \int_0^\beta \|g(z)\|_\B \int_{2\beta}^\infty \frac{\beta}{|x-z|^2}  dx dz \nonumber \\
                \leq & C \int_0^\beta \|g(z)\|_\B  \frac{\beta}{2\beta-z} dz
                \leq C.
        \end{align}
        Note that the constants $C>0$ in \eqref{2.10}-\eqref{2.14} do not depend on $g$.\\

        To justify \eqref{y+beta} we observe that
        \begin{align*}
            \partial_t [P_t(y - z)-P_t(y+\beta)]
                 = & \frac{\beta^2-z^2+2y(\beta+z)}{\pi} \left( \frac{1}{(t^2+|y-z|^2)(t^2+(y+\beta)^2)} \right. \\
                 & \left. - 2t^2 \frac{2t^2+(y+\beta)^2+|y-z|^2}{(t^2+|y-z|^2)^2(t^2+(y+\beta)^2)^2} \right), \quad y,z \in \mathbb{R}, \ t>0.
        \end{align*}
        Moreover, if $0<y<\infty$ and $0<z<\beta$, $|y-z| \leq y + \beta$, and
        \begin{align*}
            |\partial_t [P_t(y - z)-P_t(y+\beta)]|
                \leq C & \frac{\beta(y+\beta)}{(t+|y-z|)^2(t+y+\beta)^2}
                \leq \frac{\beta}{(t+|y-z|)^3}.
        \end{align*}
        In a similar way we can see, for each $0<y<\infty$ and $0<z<\beta$,
        \begin{align*}
            |\partial_t [P_t(y + z)-P_t(y+\beta)]|
                \leq C \frac{\beta}{(t+y+z)^2(t+y+\beta)}
                \leq  \frac{\beta}{(t+|y-z|)^3}.
        \end{align*}
        By proceeding as in \eqref{bound1} we obtain \eqref{y+beta}.\\

        Putting together all the above estimations we conclude that
        \begin{align*}
            \|S^q_+(h) - S^q_{+,loc}(h)\|_{L^1(0,\infty)}
                \leq & C \sum_{j=1}^\infty |\gamma_j|
                \leq  C \sum_{j=1}^\infty |\lambda_j|.
        \end{align*}
        Hence,
        \begin{align*}
            \|S^q_+(h) - S^q_{+,loc}(h)\|_{L^1(0,\infty)}
                \leq & C \|h\|_{H^1(\mathbb{R},\B)}.
        \end{align*}
        Thus, $(b') \Leftrightarrow (c')$ is established.\\

        Assume again that $h \in H^1_e(\mathbb{R},\B)$. We define $H$ as the odd extension of $h_{|_{[0,\infty)}}$ to $\mathbb{R}$. It is clear
        that $H \in H^1_o(\mathbb{R},\B)$ and $\|H\|_{H^1(\mathbb{R},\B)} \leq C \|h\|_{H^1(\mathbb{R},\B)}$. Then, according to $(ii)$ we get
        \begin{align*}
            \|S^q_{\lambda,+}(h)\|_{L^1(0,\infty)}
                = & \|S^q_{\lambda,+}(H)\|_{L^1(0,\infty)}
                \leq C \|H\|_{H^1(\mathbb{R},\B)}
                \leq C \|h\|_{H^1(\mathbb{R},\B)}.
        \end{align*}
        Hence, we have that
        \begin{equation}\label{2.15}
            \|S^q(f_e)\|_{L^1(\mathbb{R})}
                \leq C \|f_e\|_{H^1(\mathbb{R},\B)}.
        \end{equation}
        By combining \eqref{2.9} and \eqref{2.15} we conclude that
        \begin{equation*}
            \|S^q(f)\|_{L^1(\mathbb{R})}
                \leq C \|f\|_{H^1(\mathbb{R},\B)}.
        \end{equation*}
        Thus the proof of this lemma is completed.
    \end{proof}

    The proof of Proposition~\ref{Lem_principal} is now consequence of Lemmas~\ref{Lem_Lp} and \ref{Lem_L1} and \cite[Lemma 4.2]{OX}.

    \section{Proof of Theorem~\ref{Th_3.1_OX}} \label{sec:proof1}

    \subsection{Proof of $(ii) \Rightarrow (i)$} \label{subsec:2->1}

    Assume that $\B$ has an equivalent norm which is $q$-uniformly convex.\\

    Let $f \in BMO_o(\mathbb{R},\B)$ and take $I=(a,b)$ such that $0 \leq a<b<\infty$. We denote by $2I$
    the interval $(0,\infty) \cap (x_I-|I|,x_I+|I|)$ where $x_I=(a+b)/2$. We decompose $f$ as follows:
    $$f \chi_{(0,\infty)}=(f-f_{2I})\chi_{2I} + (f-f_{2I}) \chi_{(0,\infty)\setminus 2I} + f_{2I}=f_1  + f_2 + f_3.$$
    We are going to show, for $i=1,2,3,$
    \begin{equation}\label{objetivo2}
        \left( \frac{1}{|I|}\int_0^{|I|} \int_I \|t \partial_t P_t^\lambda(f_i)(x) \|^q_\B \frac{dx dt}{t} \right)^{1/q}
                                        \leq C \|f\|_{BMO_o(\mathbb{R},\B)},
    \end{equation}
    where the constant $C>0$ depends neither on $I$ nor on $f$.\\

    Firstly, we prove \eqref{objetivo2} for $i=1$. Note that $|I \cap (x-t,x+t)| \sim t$, when $x \in I$ and $0<t<|I|$. We have that
    \begin{align} \label{3.1.a}
        \int_0^{|I|} \int_I \|t \partial_t P_t^\lambda(f_1)(x) \|^q_\B \frac{dx dt}{t}
            \leq & C \int_0^{|I|} \int_I \|t \partial_t P_t^\lambda(f_1)(x) \|^q_\B \frac{|I \cap (x-t,x+t)|}{t^2} dx dt  \nonumber\\
            = & C \int_0^{|I|} \int_I \int_{I \cap (x-t,x+t)} \|t \partial_t P_t^\lambda(f_1)(x) \|^q_\B dz \frac{dx dt}{t^2} \nonumber \\
            \leq & C \int_I \int_{\G_+(z)} \|t \partial_t P_t^\lambda(f_1)(x) \|^q_\B  \frac{dt dx }{t^2}dz
            \leq  C \|S^q_{\lambda,+}(f_1)\|_{L^q(0,\infty)}^q.
    \end{align}
    By using Proposition~\ref{Lem_principal} and John-Nirenberg's inequality we get
    \begin{align*}
        \left( \frac{1}{|I|}\int_0^{|I|} \int_I \|t \partial_t P_t^\lambda(f_1)(x) \|^q_\B \frac{dx dt}{t} \right)^{1/q}
            \leq & C \left( \frac{1}{|I|} \|S^q_{\lambda,+}(f_1)\|_{L^q(0,\infty)}^q \right)^{1/q} \\
            \leq & C \left( \frac{1}{|I|} \int_{2I} \| f(x)  - f_{2I}\|_\B^q dx\right)^{1/q}
            \leq C \|f\|_{BMO_o(\mathbb{R},\B)}.
    \end{align*}

    On the other hand, from \cite[(b), p.  86]{MS} we deduce that
    \begin{align}\label{3.2a}
        |t\partial_t P_t^\lambda(x,y)|
            \leq & C \left( P_t^\lambda(x,y) + t^3 (xy)^\lambda \int_0^\pi \frac{(\sin \theta)^{2\lambda-1}}{(x^2 + y^2 + t^2 - 2xy \cos \theta)^{\lambda+2}} d\theta \right) \nonumber \\
            \leq & C P_t^\lambda(x,y)
            \leq C \frac{t}{t^2+(x-y)^2}, \quad x,y,t \in (0,\infty).
    \end{align}
    Then, we can write, for every $x \in I$ and $t>0$,
    \begin{align*}
        \| t\partial_t P_t^\lambda(f_2)(x) \|_\B
            \leq & C \int_{(0,\infty) \setminus 2I} \frac{t}{t^2+(x-y)^2} \|f(y)-f_{2I}\|_\B dy \\
            \leq & C \int_{(0,\infty) \setminus 2I} \frac{t}{t^2+(x_I-y)^2} \|f(y)-f_{2I}\|_\B dy \\
            \leq & C \frac{t}{|I|} \sum_{k=1}^\infty \frac{1}{2^k} \left( \frac{1}{2^k |I|} \int_{2^{k+1}I \cap (0,\infty)}  \|f(y)-f_{2^{k+1}I}\|_\B dy + \|f_{2^{k+1}I} - f_{2I}\|_\B \right) \\
            \leq & C \frac{t}{|I|} \sum_{k=1}^\infty \frac{k}{2^k} \|f\|_{BMO_o(\mathbb{R},\B)}
            \leq C \frac{t}{|I|} \|f\|_{BMO_o(\mathbb{R},\B)}.
    \end{align*}
    Hence,
    \begin{align*}
        \left( \frac{1}{|I|}\int_0^{|I|} \int_I \|t \partial_t P_t^\lambda(f_2)(x) \|^q_\B \frac{dx dt}{t} \right)^{1/q}
            \leq & C \left( \frac{1}{|I|^{q+1}}\int_0^{|I|} t^{q-1} dt \int_I dx  \right)^{1/q} \|f\|_{BMO_o(\mathbb{R},\B)} \\
            \leq & C \|f\|_{BMO_o(\mathbb{R},\B)}.
    \end{align*}

    Finally, we show \eqref{objetivo2} for $i=3$. Observe that in the classical case (see \cite{OX}) this term does not appear because the
    corresponding integral vanishes. First of all, we notice that
    $$\|f_{2I}\|_\B \leq \frac{1}{2|I|} \int_0^{x_I+|I|} \|f(x)\|_\B dx \leq \frac{x_I+|I|}{2|I|} \|f\|_{BMO_o(\mathbb{R},\B)}.$$
    Then, in order to establish \eqref{objetivo2} for $i=3$ it is sufficient to show that
    \begin{equation}\label{objetivo3}
          \frac{(x_I+|I|)^q}{|I|^{q+1}}\int_0^{|I|} \int_I  |t\partial_t P_t^\lambda(1)(x)|^q \frac{dx dt}{t}
            \leq C,
    \end{equation}
    where $C>0$ does not depend on $I$.\\

    By taking into account that $|t \partial_t P_t^\lambda(x,y)| \leq C P_t^\lambda(x,y)$, $x,y,t \in (0,\infty)$,
    (see \eqref{3.2a}) we can write
    \begin{align*}
         |t\partial_t P_t^\lambda(1)(x)|
            \leq & C \left( \int_0^{x/2} P_t^\lambda(x,y) dy +  \left| t\partial_t \int_{x/2}^{3x/2}  P_t^\lambda(x,y) dy \right| + \int_{3x/2}^\infty P_t^\lambda(x,y) dy \right)
            =  C \sum_{j=1}^3 J_j(x,t).
    \end{align*}
    According to \cite[(b), p. 86]{MS} we have that,
    \begin{equation}\label{J1}
        J_1(x,t)
            \leq C \int_0^{x/2} \frac{(xy)^\lambda t}{[(x-y)^2+t^2]^{\lambda+1}} dy
            \leq C  \frac{x^{2\lambda+1} t}{(x^2+t^2)^{\lambda+1}}
            \leq C \frac{t}{t+x}, \quad x,t \in (0,\infty),
    \end{equation}
    \begin{equation}\label{J2}
        J_2(x,t)
            \leq C \int_{x/2}^{3x/2} \frac{(xy)^\lambda t}{[(x-y)^2+t^2]^{\lambda+1}} dy
            \leq C  \frac{x^{2\lambda+1}}{t^{2\lambda+1}}, \quad x,t \in (0,\infty),
    \end{equation}
    and
    \begin{align}\label{J3}
        J_3(x,t)
            \leq & C \int_{3x/2}^\infty \frac{(xy)^\lambda t}{[(x-y)^2+t^2]^{\lambda+1}} dy
            \leq  C x^\lambda t \int_{3x/2}^\infty \frac{y^\lambda }{(y^2+t^2)^{\lambda+1}} dy \nonumber \\
            \leq & C x^\lambda t \int_{3x/2}^\infty \frac{1 }{(y+t)^{\lambda+2}} dy
            \leq C \frac{x^\lambda t}{(x+t)^{\lambda+1}}
            \leq C \frac{t}{t+x}, \quad x,t \in (0,\infty).
    \end{align}

    We need also to estimate $J_2$ in a different way. The classical Poisson kernel is introduced as follows
    \begin{align*}
        J_2(x,t)
            \leq &  \left|\int_{x/2}^{3x/2} t\partial_t [P_t^\lambda(x,y) - P_t(x-y)]  dy \right|  + \left| t\partial_t\int_{x/2}^{3x/2}  P_t(x-y) dy \right|
            =  \sum_{j=1}^2 J_{2,j}(x,t), \ x,t \in (0,\infty).
    \end{align*}
    The function under the integral sign in $J_{2,1}$ is decomposed as follows
    \begin{align}\label{3.5}
        t\partial_t & [P_t^\lambda(x,y) - P_t(x-y)]
            =  P_t^\lambda(x,y)- P_t(x-y) \nonumber \\
              &  - \left( \frac{4(xy)^\lambda t^3 \lambda (\lambda+1)}{\pi} \int_0^\pi \frac{(\sin \theta)^{2\lambda-1}}{((x-y)^2 + t^2 + 2xy(1- \cos \theta))^{\lambda+2}} d\theta
                - \frac{2t^3}{\pi} \frac{1}{((x-y)^2+t^2)^2}\right) \nonumber \\
            = & (P_{t,1}^\lambda(x,y)- P_t(x-y)) + P_{t,2}^\lambda(x,y)  \nonumber \\
              &  - \left( \frac{4(xy)^\lambda t^3 \lambda (\lambda+1)}{\pi} \int_0^{\pi/2} \frac{(\sin \theta)^{2\lambda-1}}{((x-y)^2 + t^2 + 2xy(1- \cos \theta))^{\lambda+2}} d\theta
                - \frac{2t^3}{\pi} \frac{1}{((x-y)^2+t^2)^2}\right) \nonumber \\
              & - \frac{4(xy)^\lambda t^3 \lambda (\lambda+1)}{\pi} \int_{\pi/2}^\pi \frac{(\sin \theta)^{2\lambda-1}}{(x^2+y^2 + t^2 - 2xy\cos \theta)^{\lambda+2}} d\theta,
              \quad x,y,t \in (0,\infty),
    \end{align}
    where as above $P_{t,1}^\lambda$ and $P_{t,2}^\lambda$ is defined in \eqref{7.2}.\\

    Firstly note that
    \begin{align}\label{3.6}
        \Big| (xy)^\lambda t^3   &  \int_{\pi/2}^\pi \frac{(\sin \theta)^{2\lambda-1}}{(x^2 + y^2 + t^2 - 2xy \cos \theta)^{\lambda+2}} d\theta \Big|
            \leq  C P_{t,2}^\lambda(x,y) \nonumber \\
            \leq & C  \frac{(xy)^\lambda t}{(x+y+t)^{2\lambda+2}} \int_{\pi/2}^\pi (\sin \theta)^{2\lambda-1} d\theta
            \leq  C \frac{t}{xy}, \quad x,y,t \in (0,\infty).
    \end{align}

    Since $\sin \theta \sim \theta$ and $2(1-\cos \theta) \sim \theta^2$, $\theta \in [0,\pi/2]$, by using the mean value theorem we obtain
    \begin{align}\label{3.7}
        & \left| \frac{2(xy)^\lambda t^3 \lambda (\lambda+1)}{\pi} \int_0^{\pi/2} \frac{(\sin \theta)^{2\lambda-1}}{((x-y)^2 + t^2 + 2xy(1- \cos \theta))^{\lambda+2}} d\theta
                - \frac{t^3}{\pi} \frac{1}{((x-y)^2+t^2)^2}\right| \nonumber \\
        & \quad \leq \left| \frac{2(xy)^\lambda t^3 \lambda (\lambda+1)}{\pi} \int_0^{\pi/2} \frac{(\sin \theta)^{2\lambda-1}-\theta^{2\lambda-1}}{((x-y)^2 + t^2 + 2xy(1- \cos \theta))^{\lambda+2}} d\theta \right| \nonumber \\
        & \qquad + \left| \frac{2(xy)^\lambda t^3 \lambda (\lambda+1)}{\pi} \int_0^{\pi/2}\theta^{2\lambda-1}
                                \left( \frac{1}{((x-y)^2 + t^2 + 2xy(1- \cos \theta))^{\lambda+2}} - \frac{1}{((x-y)^2 + t^2 + xy \theta^2)^{\lambda+2}}\right) d\theta \right| \nonumber \\
        & \qquad + \left| \frac{2(xy)^\lambda t^3 \lambda (\lambda+1)}{\pi} \int_0^{\pi/2} \frac{\theta^{2\lambda-1}}{((x-y)^2 + t^2 + xy \theta^2)^{\lambda+2}} d\theta
                - \frac{t^3}{\pi} \frac{1}{((x-y)^2+t^2)^2}\right| \nonumber \\
        & \quad \leq C \left( (xy)^\lambda t^3 \int_0^{\pi/2} \frac{\theta^{2\lambda+1}}{((x-y)^2 + t^2 + xy\theta^2)^{\lambda+2}} d\theta
                            + (xy)^{\lambda+1} t^3 \int_0^{\pi/2} \frac{\theta^{2\lambda+3}}{((x-y)^2 + t^2 + xy\theta^2)^{\lambda+3}} d\theta \right) \nonumber\\
        & \qquad + \frac{t^3}{\pi}\left| \frac{2 \lambda (\lambda+1)}{((x-y)^2+t^2)^2} \int_0^{\frac{\pi}{2}\sqrt{\frac{xy}{(x-y)^2+t^2}}} \frac{u^{2\lambda-1}}{(1+u^2)^{\lambda+2}} du
                - \frac{1}{((x-y)^2+t^2)^2}\right| \nonumber
    \end{align}
    \begin{align}
        & \quad \leq C t^3 \left( \frac{1}{xy((x-y)^2+t^2)} \int_0^{\frac{\pi}{2}\sqrt{\frac{xy}{(x-y)^2+t^2}}} \frac{u^{2\lambda+1}}{(1+u^2)^{\lambda+2}} du
                                + \frac{1}{((x-y)^2+t^2)^2} \int_{\frac{\pi}{2}\sqrt{\frac{xy}{(x-y)^2+t^2}}}^\infty \frac{u^{2\lambda-1}}{(1+u^2)^{\lambda+2}} du\right) \nonumber \\
        & \quad \leq C t^3 \left( \frac{1}{xy((x-y)^2+t^2)} \int_0^\infty \frac{u^{2\lambda+1}}{(1+u^2)^{\lambda+2}} du
                                + \frac{1}{((x-y)^2+t^2)^2} \frac{1}{1+\frac{xy}{(x-y)^2+t^2}}\int_{0}^\infty \frac{u^{2\lambda-1}}{(1+u^2)^{\lambda+1}} du\right) \nonumber\\
        & \quad \leq C \frac{t}{xy}, \quad x,y,t \in (0,\infty).
    \end{align}
    We have used that $  \int_0^\infty u^{2\lambda-1}/(1+u^2)^{\lambda+2} du = 1/(2\lambda(\lambda+1))$.\\

    Finally, by proceeding in a similar way and using that $  \int_0^\infty u^{2\lambda-1}/(1+u^2)^{\lambda+1} du = 1/2\lambda$,
    we get
    \begin{align}\label{3.8}
        & |P_{t,1}^\lambda(x,y)- P_t(x-y)|
        = \left| \frac{2(xy)^\lambda t \lambda }{\pi} \int_0^{\pi/2} \frac{(\sin \theta)^{2\lambda-1}}{((x-y)^2 + t^2 + 2xy(1- \cos \theta))^{\lambda+1}} d\theta
                - \frac{t}{\pi} \frac{1}{(x-y)^2+t^2}\right| \nonumber \\
        & \quad \leq \left| \frac{2(xy)^\lambda t\lambda }{\pi} \int_0^{\pi/2} \frac{(\sin \theta)^{2\lambda-1}-\theta^{2\lambda-1}}{((x-y)^2 + t^2 + 2xy(1- \cos \theta))^{\lambda+1}} d\theta \right| \nonumber \\
        & \qquad + \left| \frac{2(xy)^\lambda t \lambda }{\pi} \int_0^{\pi/2}\theta^{2\lambda-1}
                                \left( \frac{1}{((x-y)^2 + t^2 + 2xy(1- \cos \theta))^{\lambda+1}} - \frac{1}{((x-y)^2 + t^2 + xy \theta^2)^{\lambda+1}}\right) d\theta \right| \nonumber \\
        & \qquad + \left| \frac{2(xy)^\lambda t \lambda }{\pi} \int_0^{\pi/2} \frac{\theta^{2\lambda-1}}{((x-y)^2 + t^2 + xy \theta^2)^{\lambda+1}} d\theta
                - \frac{t}{\pi} \frac{1}{(x-y)^2+t^2}\right| \nonumber \\
        & \quad \leq C \left( (xy)^\lambda t \int_0^{\pi/2} \frac{\theta^{2\lambda+1}}{((x-y)^2 + t^2 + xy\theta^2)^{\lambda+1}} d\theta
                            + (xy)^{\lambda+1} t \int_0^{\pi/2} \frac{\theta^{2\lambda+3}}{((x-y)^2 + t^2 + xy\theta^2)^{\lambda+2}} d\theta  \right. \nonumber\\
        & \qquad \left. + t \left| \frac{2 \lambda }{(x-y)^2+t^2} \int_0^{\frac{\pi}{2}\sqrt{\frac{xy}{(x-y)^2+t^2}}} \frac{u^{2\lambda-1}}{(1+u^2)^{\lambda+1}} du
                - \frac{1}{(x-y)^2+t^2}\right| \right) \nonumber\\
        & \quad \leq C t \left( \int_0^{\pi/2} \frac{\theta}{(x-y)^2 + t^2 + xy\theta^2} d\theta +
            \frac{1}{(x-y)^2+t^2}\int_{\frac{\pi}{2}\sqrt{\frac{xy}{(x-y)^2+t^2}}}^\infty \frac{u^{2\lambda-1}}{(1+u^2)^{\lambda+1}} du\right) \nonumber\\
        & \quad \leq C \frac{t}{xy} \left(1+ \log \left( 1 + \frac{xy}{(x-y)^2} \right) \right)
            , \quad x,y,t \in (0,\infty), \quad x \neq y.
    \end{align}
    Putting together \eqref{3.5}-\eqref{3.8} we obtain
    $$J_{2,1}(x,t)
        \leq C \frac{t}{x} \int_{x/2}^{3x/2} \frac{1}{y} \left( 1+\log \left( 1 + \frac{x^2}{(x-y)^2} \right)  \right) dy
        \leq C \frac{t}{x}, \quad x,t \in (0,\infty).$$
    Moreover, we have that
    \begin{align*} 
        J_{2,2}(x,t)
            &= \left |t\partial_t \int_{x/2}^{3x/2} P_t(x-y) dy \right|
            = \frac{2}{\pi}  \left |t\partial_t \int_{0}^{x/2} \frac{t}{t^2+u^2} du \right|
                      = \frac{2}{\pi}  \left |t\partial_t \left( \frac{\pi}{2} - \int_{x/2}^\infty \frac{t}{t^2+u^2} du \right) \right| \nonumber  \\
                     & \leq C t \int_{x/2}^\infty \frac{du}{u^2} \leq C \frac{t}{x}, \quad x,t \in (0,\infty).
    \end{align*}
    Hence, it follows that
    \begin{equation}\label{3.9}
        J_2(x,t)
            \leq C \frac{t}{x}, \quad x,t \in (0,\infty).
    \end{equation}

    We now prove \eqref{objetivo3}. Suppose firstly $|I| \leq x_I$. Then, since $q \geq 2$,
    \begin{align*}
        \frac{(x_I+|I|)^q}{|I|^{q+1}}\int_0^{|I|} \int_I  |t\partial_t P_t^\lambda(1)(x)|^q \frac{dx dt}{t}
        \leq &C  \frac{x_I^q}{|I|^{q+1}}\int_0^{|I|} t^{q-1} dt \int_{x_I-|I|/2}^{x_I+|I|/2}  \frac{dx}{x^q}\\
        \leq &C  \frac{x_I^q}{(x_I-|I|/2)^{q}}
        \leq C,
    \end{align*}
    because $|t\partial_t P_t^\lambda(1)(x)| \leq C t/x$, $x,t \in (0,\infty)$ (see \eqref{J1}, \eqref{J3} and \eqref{3.9}).\\

    Assume now that $|I|>x_I$. From \eqref{J1} and \eqref{J3} we deduce
    \begin{align*}
        \frac{(x_I+|I|)^q}{|I|^{q+1}}\int_0^{|I|} \int_I  J_i(x,t)^q \frac{dx dt}{t}
            \leq  & \frac{C}{|I|} \int_0^{|I|} t^{q-1} \int_0^{x_I+|I|} \frac{dx}{(t+x)^q} dt
            \leq C, \quad i=1,3.
    \end{align*}
    Finally, from \eqref{J2} and \eqref{3.9} it follows that
    \begin{align*}
        \frac{(x_I+|I|)^q}{|I|^{q+1}}\int_0^{|I|} \int_I  J_2(x,t)^q \frac{dx dt}{t}
            \leq  &   \frac{C}{|I|}  \int_I  \left( \int_0^x \left( \frac{t}{x} \right)^q \frac{dt}{t} +  \int_x^\infty \left( \frac{x}{t} \right)^{(2\lambda+1)q} \frac{dt}{t}\right) dx
            \leq C.
    \end{align*}

    Hence, \eqref{objetivo3} is established and the proof of $(ii) \Rightarrow (i)$ is finished.

    \begin{flushright}
        \qed
    \end{flushright}

    \subsection{Proof of $(i) \Rightarrow (ii)$} \label{subsec:1->2}

    Assume that $(i)$ holds. According to Proposition~\ref{Lem_principal}, in order to see that $\B$ has an equivalent $q$-uniformly convex norm
    it is enough to prove that there exists $C>0$ such that
    \begin{equation}\label{16.1}
        \| S^q_{\lambda,+}(f) \|_{L^q(0,\infty)}
            \leq C \|f\|_{L^q((0,\infty),\B)}, \quad f \in L^q((0,\infty),\B).
    \end{equation}
    Note firstly that \eqref{16.1} is a finite dimensional inequality in the following sense: if \eqref{16.1} holds when $\B$ is replaced
    by $\E$, where $\E$ is a finite dimensional subspace of $\B$, with a constant $C>0$ independent of $\E$, then \eqref{16.1}
    is also true for every $f \in L^q((0,\infty),\B)$ with the same constant $C>0$. This fact is a consequence of the density of
    $L^q(0,\infty) \otimes \B$ into $L^q((0,\infty),\B)$. Recall that every $f \in L^q(0,\infty) \otimes \B$ can be written as $f=\sum_{j=1}^n b_j f_j$, where $b_j \in \B$, $f_j \in L^q(0,\infty)$, $j=1, \dots, n$, and $n \in \mathbb{N}$.\\

    Let $\E$ be a subspace of $\B$ such that $\dim \E<\infty$. Applying again Proposition~\ref{Lem_principal}, instead of proving \eqref{16.1}
    for functions taking values in $\E$, it is sufficient to show that
    \begin{equation}\label{3.10}
        \| S^q_{\lambda,+}(f) \|_{L^1(0,\infty)}
            \leq C \|f\|_{H^1_o(\mathbb{R},\E)}, \quad f \in H^1_o(\mathbb{R},\E),
    \end{equation}
    being $C>0$ a constant independent of $\E$.
    Moreover, \eqref{3.10} holds provided that, for a certain $C>0$,
    \begin{equation}\label{3.11}
        \| S^q_{\lambda}(f) \|_{BMO(0,\infty)}
            \leq C \|f\|_{L^\infty((0,\infty),\E)}, \quad f \in L^\infty_c((0,\infty),\E),
    \end{equation}
    where $L^\infty_c((0,\infty),\E)$ denotes the space of functions in $L^\infty((0,\infty),\E)$ that have compact support. To make easier the reading of this part, the proof of that \eqref{3.11} implies \eqref{3.10} will be included in Section~\ref{sec:appendix} (see Proposition~\ref{Prop_BMO_H1}).\\

    Observe that \eqref{3.11} can be written as follows
    \begin{equation}\label{objetivo4}
        \left\| t \partial_t P_t^\lambda(f)(x+y) \right\|_{BMO\left((0,\infty),L^q\left(\G(0),\frac{dtdy}{t^2},\E\right)\right)}
            \leq C \|f\|_{L^\infty((0,\infty),\E)}, \quad f \in L^\infty_{c}((0,\infty),\E).
    \end{equation}

    Inequality \eqref{objetivo4} will be proved by using duality. Our objective is to show that, there exists $C>0$ such that for every
    $f \in L^\infty_{c}((0,\infty),\E)$ and  $h \in H^1\left((0,\infty), L^{q'}\left(\G(0),\frac{dtdy}{t^2}, \E^*\right) \right)$,
    \begin{equation}\label{3.13}
        |\langle t \partial_t P_t^\lambda(f)(x+y), h(x,y,t) \rangle|
            \leq C  \|f\|_{L^\infty((0,\infty),\E)} \|h\|_{H^1\left((0,\infty), L^{q'}\left(\G(0),\frac{dtdy}{t^2}, \E^*\right) \right)}.
    \end{equation}
    Recalling the atomic definition of $H^1\left((0,\infty), L^{q'}\left(\G(0),\frac{dtdy}{t^2}, \E^*\right) \right)$,
    by density arguments it is sufficient to prove \eqref{3.13} for every $f \in L^\infty_{c}((0,\infty),\E)$ and
    $h \in L^\infty_{c}\left((0,\infty), L^{q'}\left(\G(0),\frac{dtdy}{t^2},\E^*\right) \right)
           \cap H^1\left((0,\infty), L^{q'}\left(\G(0),\frac{dtdy}{t^2},\E^*\right) \right) $.\\

    Let $f \in L^\infty_{c}((0,\infty),\E)$ and $h \in L^\infty_{c}\left((0,\infty), L^{q'}\left(\G(0),\frac{dtdy}{t^2},\E^*\right) \right)$.
    We can write,
    \begin{align*}
        \langle t \partial_t P_t^\lambda(f)(x+y), h(x,y,t) \rangle
            =& \int_0^\infty \int_{\G(0)} \langle t \partial_t P_t^\lambda(f)(x+y),h(x,y,t) \rangle \frac{dtdy}{t^2} dx\\
            =& \lim_{N \to \infty} \int_0^\infty \int_{\G_N(0)} \langle t \partial_t P_t^\lambda(f)(x+y),h(x,y,t)\rangle \frac{dtdy}{t^2} dx,
    \end{align*}
    where, for every $N \in \mathbb{N}$, the truncated cone $\G_N(0)$ is defined by
    \begin{equation}\label{conoN}
        \G_N(0)=\{(y,t) \in \G(0) : 1/N < t < N\}.
    \end{equation}
    Note that the above limit exists because the integral is absolutely convergent. Indeed, for every $x \in (0,\infty)$,
    $S^q_\lambda(f)(x) \leq 2^{1/q} S_{\lambda,+}^q(f)(x).$ Then, according to Proposition~\ref{Lem_principal},
    since $\dim \E<\infty$, $S_\lambda^q$ is bounded from $L^2((0,\infty),\E)$ into $L^2(0,\infty)$. By applying Hölder's inequality and by
    taking into account that $f$ and $h$ have compact support we deduce that the integral under analysis is absolutely convergent.\\

    Interchanging the order of integration we get
    \begin{equation}\label{3.14}
        \langle t \partial_t P_t^\lambda(f)(x+y), h(x,y,t) \rangle
            = \lim_{N \to \infty} \int_0^\infty \langle f(z) , \Psi_N(h)(z) \rangle dz,
    \end{equation}
    where, for each $N \in \mathbb{N}$,
    \begin{equation*} 
        \Psi_N(h)(z) =  \int_{\G_N(0)} \int_0^\infty t \partial_t P_t^\lambda(x+y,z)h(x,y,t) dx \frac{dtdy}{t^2}, \quad z \in (0,\infty).
    \end{equation*}
    The interchange in the order of integration is justified because by using Hölder's inequality we obtain
    \begin{align}\label{finito}
         \int_0^\infty \int_0^\infty  \int_{\G_N(0)}  & |t \partial_t P_t^\lambda(x+y,z)|  \|h(x,y,t)\|_{\E^*}  \|f(z)\|_{\E}  \frac{dtdy}{t^2}  dx dz \nonumber \\
            \leq & C \|f\|_{L^\infty((0,\infty),\E)}  \int_{\supp(f)} \int_{\supp(h)}
                    \left(\int_{\G_N(0)} \|h(x,y,t)\|_{\E^*}^{q'} \frac{dtdy}{t^2} \right)^{1/q'} \nonumber \\
            & \times \left(\int_{\G_N(0)} |t \partial_t P_t^\lambda(x+y,z)|^{q} \frac{dtdy}{t^2} \right)^{1/q} dx dz \nonumber  \\
            \leq & C  \|f\|_{L^\infty((0,\infty),\E)} \|h\|_{L^\infty\left((0,\infty), L^{q'}\left(\G(0),\frac{dtdy}{t^2},\E^*\right) \right)} \nonumber \\
            & \times \int_{\supp(f)} \int_{\supp(h)}  \left(\int_{\G_N(x)} |t \partial_t P_t^\lambda(y,z)|^{q} \frac{dtdy}{t^2} \right)^{1/q} dx dz, \quad N \in \mathbb{N}.
    \end{align}
    Then, since $\supp(f)$ and $\supp(h)$ are compact, by using \eqref{Pt1} and \eqref{Pt2} we conclude that
    $$\int_0^\infty \int_0^\infty  \int_{\G_N(0)}   |t \partial_t P_t^\lambda(x+y,z)|  \|h(x,y,t)\|_{\E^*}  \|f(z)\|_{\E}   \frac{dtdy}{t^2}  dx dz
        < \infty, \quad N \in \mathbb{N}.$$

    For the incoming reasoning it is convenient to consider the operator
    $$g \longmapsto t \partial_t P_t^\lambda(\Psi_N(g))(x+y).$$
    In order to make this proof more legible the main properties of this operator will be shown in Section~\ref{sec:appendix} (Appendix 2).\\

    By interchanging the order of integration we can write, for each $x,t \in (0,\infty)$, $y \in \mathbb{R}$, and $N \in \mathbb{N}$,
    \begin{align}\label{(*b)}
        t \partial_t P_t^\lambda(\Psi_N(h))(x+y)
            = & \int_0^\infty  \int_{\G_N(0)} \int_0^\infty t \partial_t P_t^\lambda(x+y,z) s \partial_s P_s^\lambda(v+u,z)h(v,u,s) dv \frac{dsdu}{s^2} dz \nonumber\\
            = & \int_0^\infty \int_{\G_N(0)} k^\lambda_{s,t}(x,y;u,v)  h(v,u,s)\frac{dsdu}{s^2} dv
            = \Phi_N(h)(x,y,t),
    \end{align}
    where the kernel $k^{\lambda}_{s,t}$ is given by
    $$ k^\lambda_{s,t}(x,y;u,v) = \int_0^\infty t \partial_t P_t^\lambda(x+y,z) s \partial_s P_s^\lambda(v+u,z) dz,
        \quad v,x,s,t \in (0,\infty), \ u,y \in \mathbb{R}.$$
    The interchange in the order of integration is justified because the integral is absolutely convergent.
    Indeed, according to  \eqref{(*)}, \eqref{Pt1} and \eqref{Pt2} we have that
    \begin{align*}
        \int_0^\infty \int_0^\infty \int_{\G_N(0)}  & |t \partial_t P_t^\lambda(x+y,z)| \ |s \partial_s P_s^\lambda(v+u,z)| \ \|h(v,u,s)\|_{\E^*}  \frac{dsdu}{s^2} dv dz \\
            \leq & C \int_0^\infty \frac{t z^\lambda}{(||x+y|-z|+t)^{\lambda+2}} \int_{\supp(h)} \left(\int_{\G_N(0)} \|h(v,u,s)\|_{\E^*}^{q'} \frac{dsdu}{s^2} \right)^{1/q'} \\
            & \times \left(\int_{\G_N(0)} |s \partial_s P_s^\lambda(v+u,z)|^{q} \frac{dsdu}{s^2} \right)^{1/q} dv dz <\infty, \
            x,t \in (0,\infty), \ y \in \mathbb{R}, \ N \in \mathbb{N}.
    \end{align*}

    In Section~\ref{sec:appendix}, Proposition~\ref{Pro5.2}, we establish that the sequence
    of operators $\{\Phi_N\}_{N \in \mathbb{N}}$ is uniformly bounded from $H^1\left((0,\infty),L^{q'}\left(\G(0),\frac{dtdy}{t^2},\E^*\right)\right)$
    into $L^1\left((0,\infty),L^{q'}\left(\G(0),\frac{dtdy}{t^2},\E^*\right)\right)$.\\

    We now return back to \eqref{3.14}. Let $N \in \mathbb{N}$. We can write
    \begin{equation}\label{3.17}
    \int_0^\infty \langle f(z) ,\Psi_N(h)(z) \rangle dz
        = 4 \int_0^\infty \int_0^\infty \langle t\partial_tP_t^\lambda(f)(y) , t\partial_tP_t^\lambda(\Psi_N(h))(y) \rangle \frac{dydt}{t}.
    \end{equation}
    This equality can be shown by proceeding as in the proof of \cite[Proposition 4.4]{BCFR2} and by taking into account the following facts:
    \begin{itemize}
        \item $f \in L^\infty_c((0,\infty),\E)$.
        \item $(1+z^2)^{-1}\Psi_N(h) \in L^1((0,\infty),\E^*)$. Indeed, arguing as in \eqref{finito} it can be proved that
              $\Psi_N(h) \in L^\infty((0,\infty),\E^*)$.
        \item Since condition $(i)$ is assumed, if we define
              $$C_\lambda^q(f)(x)
                    =\left( \sup_{I \ni x} \frac{1}{|I|}\int_0^{|I|} \int_I  \|t \partial_t P_t^\lambda(f)(y)\|_\E^q \frac{dydt}{t} \right)^{1/q}, \quad x \in (0,\infty),$$
              where the supremum is taken over all bounded intervals $I \subset (0,\infty)$ such that $x \in I$, then $C_\lambda^q(f) \in L^\infty(0,\infty)$.
        \item $\Phi_N(h) \in L^1\left((0,\infty),L^{q'}\left(\G(0),\frac{dtdy}{t^2},\E^*\right)\right)$ because
              $h \in H^1\left((0,\infty),L^{q'}\left(\G(0),\frac{dtdy}{t^2},\E^*\right)\right)$ (Proposition~\ref{Pro5.2}).
    \end{itemize}
    By using Hölder's inequality and \eqref{3.17} if follows that (see \cite[Proposition 4.3]{BCFR2})
    \begin{align*}
        \left|\int_0^\infty \langle f(z) ,\Psi_N(h)(z) \rangle dz \right|
            \leq & C \int_0^\infty C_\lambda^q(f)(x) S_{\lambda,+}^{q'}(\Psi_N(h))(x)dx \\
            \leq & C \|C_\lambda^q(f)\|_{L^\infty(0,\infty)} \|S_{\lambda,+}^{q'}(\Psi_N(h))\|_{L^1(0,\infty)}.
    \end{align*}
    Finally, since $(i)$ holds we get
    \begin{align*}
        |\langle t \partial_t P_t^\lambda(f)(x+y), h(x,y,t) \rangle|
            \leq C & \|C_\lambda^q(f)\|_{L^\infty(0,\infty)} \limsup_{N \to \infty}  \|\Phi_N(h)\|_{L^1\left((0,\infty),L^{q'}\left(\G(0),\frac{dtdy}{t^2},\E^*\right)\right)} \\
            \leq C & \|f_o\|_{BMO_o(\mathbb{R},\E)} \|h\|_{H^1\left((0,\infty),L^{q'}\left(\G(0),\frac{dtdy}{t^2},\E^*\right)\right)},
    \end{align*}
    being $f_o$ the odd extension of $f$ to $\mathbb{R}$.\\

    Thus the proof of $(ii)$ is completed.
    \begin{flushright}
        \qed
    \end{flushright}

    \section{Proof of Theorem~\ref{Th_4.1_OX}} \label{sec:proof2}

    \subsection{Proof of $(ii) \Rightarrow (i)$} \label{subsec2:1->2}
        Assume that $(ii)$ holds. Let $f$ be an odd $\B$-valued function such that $\int_0^\infty \|f(z)\|_\B/(1+z^2) dz <\infty$.
        If
        $$\sup_{I} \frac{1}{|I|} \int_0^{|I|} \int_I  \|t \partial_t P_t^\lambda(f)(y)\|_\B^q \frac{dydt}{t}=\infty,$$
        where the supremum is taken over all the bounded intervals $I \subset (0,\infty)$, we have nothing to prove. Assume that
        $$\sup_{I} \frac{1}{|I|} \int_0^{|I|} \int_I  \|t \partial_t P_t^\lambda(f)(y)\|_\B^q \frac{dydt}{t}<\infty.$$
        According to \cite[Propositions 4.3 and 4.4]{BCFR2}, for every $g \in L^\infty_c(0,\infty) \otimes \B^*$,
        \begin{align}\label{18.1}
            \left| \int_0^\infty \langle f(x) , g(x) \rangle dx \right|
                = & 4 \left| \int_0^\infty \int_0^\infty  \langle t\partial_t P_t^\lambda(f)(y) , t\partial_t P_t^\lambda(g)(y) \rangle \frac{dy dt}{t} \right| \nonumber \\
                \leq &C  \int_0^\infty C_\lambda^q(f)(x) S_{\lambda,+}^{q'}(g)(x) dx
                \leq C \| C_\lambda^q(f) \|_{L^\infty(0,\infty)} \|S_{\lambda,+}^{q'}(g)\|_{L^1(0,\infty)},
        \end{align}
        being
        $$C_\lambda^q(f)(x)=\left( \sup_{I \ni x} \frac{1}{|I|} \int_I \int_0^{|I|} \|t \partial_t P_t^\lambda(f)(y)\|^q_{\B} \frac{dtdy}{t} \right)^{1/q}, \quad x \in (0,\infty),$$
        and
        $$ S_{\lambda,+}^{q'}(g)(x)
            = \left( \int_{\G_+(x)} \| t \partial_t P_t^\lambda(g)(y) \|^{q'}_{\B^*} \frac{dtdy}{t^2}\right)^{1/q'}, \quad x \in (0,\infty).$$

        Moreover, by using \cite[Colloraries 2.6 and 3.2]{Xu} and Proposition~\ref{Lem_principal} we deduce that
        \begin{equation} \label{18.2}
            \|S_{\lambda,+}^{q'}(g)\|_{L^1(0,\infty)}  \leq C  \|g\|_{H^1_o(\mathbb{R},\B^*)}, \quad g \in H^1_o(\mathbb{R},\B^*).
        \end{equation}

        Let $I = (a,b)$, being $0<a<b<\infty$. By applying \cite[Lemma 2.3]{GLY} and taking into account that $\overline{C_c(I) \otimes \B^*}=L^2(I,\B^*)$
        and $(f-f_I)_I=0$, we obtain,
        \begin{align*}
             \left(\frac{1}{|I|}\int_I \|f(x)-f_I\|_{\B}^2  dx \right)^{1/2}
                = & \frac{1}{|I|^{1/2}}
                    \sup_{\substack{g \in L^2(I,\B^*) \\ \|g\|_{L^2(I,\B^*)} \leq 1}}
                    \left| \int_I \langle f(x) - f_I , g(x) \rangle dx \right| \\
                = & \sup_{\substack{g \in C_c(I) \otimes \B^* \\ \|g\|_{L^2(I,\B^*)} \leq 1}}
                    \left| \int_I \langle f(x), \frac{g(x) - g_I}{|I|^{1/2}} \rangle dx \right|.
        \end{align*}
        If $g \in C_c(I)\otimes \B^*$ it is clear that $h=(g-g_I)\chi_I/2|I|^{1/2}$ is a $2$-atom in $(0,\infty)$ for $H^1_o(\mathbb{R},\B^*)$, because       $\supp(h) \subset I$, $h_I=0$ and
        \begin{align*}
            \|h\|_{L^2((0,\infty),\B^*)}
                \leq & \frac{\sqrt{2}}{2|I|^{1/2}} \left( \int_I \|g(x)\|^2_{\B^*}dx + \frac{1}{|I|}  \left(\int_I \|g(x)\|_{\B^*}dx\right)^2  \right)^{1/2}
                \leq \frac{1}{|I|^{1/2}}.
        \end{align*}
        Hence, by applying \eqref{18.1} and \eqref{18.2}, we deduce
        \begin{align*}
            \left(\frac{1}{|I|}\int_I \|f(x)-f_I\|_{\B}^2  dx \right)^{1/2}
                \leq & C \| C_\lambda^q(f) \|_{L^\infty(0,\infty)} \sup_{\substack{g \in C_c(I) \otimes \B^* \\ \|g\|_{L^2(I,\B^*)} \leq 1}}
                       \left\| \frac{g - g_I}{2|I|^{1/2}} \right\|_{H^1_o(\mathbb{R},\B^*)}
                \leq  C \| C_\lambda^q(f) \|_{L^\infty(0,\infty)}.
        \end{align*}

        Suppose now that $I=(0,\beta)$, for some $\beta>0$. By proceeding as before we get
        \begin{align*}
             \left(\frac{1}{\beta}\int_0^\beta \|f(x)\|_{\B}^2  dx \right)^{1/2}
                = & \sup_{\substack{g \in C_c(0,\beta) \otimes \B^* \\ \|g\|_{L^2((0,\beta),\B^*)} \leq 1}}
                    \left| \int_0^\beta \langle f(x), \frac{g(x)}{\beta^{1/2}} \rangle dx \right|
        \end{align*}
        and the same conclusion follows because $h=g\chi_{(0,\beta)}/\beta^{1/2}$ satisfies $\supp(h) \subset [0,\beta]$ and $\|h\|_{L^2((0,\infty),\B^*)} \leq 1/\beta^{1/2}$.\\

        Thus $(i)$ is established.
        \begin{flushright}
        \qed
        \end{flushright}

    \subsection{Proof of $(i) \Rightarrow (ii)$} \label{subsec2:1->2}

        Suppose that $(i)$ holds. In order to see $(ii)$, according to Proposition~\ref{Lem_principal},
        \cite[Colloraries 2.6 and 3.2]{Xu} and \eqref{equivalentes}, we prove that, for some $C>0$,
        \begin{equation}\label{19.1}
            \| S_{\lambda}^{q'} (g) \|_{L^{q'}(0,\infty)} \leq C \|g\|_{L^{q'}((0,\infty),\B^*)}, \quad g \in L^{q'}((0,\infty),\B^*).
        \end{equation}

        Moreover, it is sufficient to see that, there exists $C>0$ such that
        \begin{equation}\label{19.2}
            \| S_\lambda^{q'} (g) \|_{L^{q'}(0,\infty)} \leq C \|g\|_{L^{q'}((0,\infty),\E^*)}, \quad g \in L^{q'}((0,\infty),\E^*),
        \end{equation}
        for every subspace $\E$ of $\B$, being $\dim \E < \infty$.
        Indeed, assume that \eqref{19.2} holds and take $g \in L^{q'}((0,\infty),\B^*)$. By \cite[Lemma 2.3]{GLY}, we can write
        \begin{align*}
            \| S_{\lambda}^{q'} (g) \|_{L^{q'}(0,\infty)}
                & = \|t\partial_t P_t^\lambda(g)(x+y)\|_{L^{q'}\left((0,\infty) \times \G(0),\frac{dtdy}{t^2}dx,\B^*\right) } \\
                & = \sup_{\substack{G \in L^{q}\left((0,\infty) \times \G(0),\frac{dtdy}{t^2}dx,\B\right)  \\
                                    \|G\|_{L^{q}\left((0,\infty) \times \G(0),\frac{dtdy}{t^2}dx,\B\right) }\leq 1}}
                    \left|\int_0^\infty \int_{\G(0)} \langle t \partial_t P_t^\lambda(g)(x+y),  G(x,y,t) \rangle \frac{dtdy}{t^2} dx \right| \\
                & = \sup_{\substack{G \in L^{q}\left((0,\infty)\times\G(0),\frac{dtdy}{t^2}dx  \right)\otimes \B \\
                                    \|G\|_{L^{q}\left((0,\infty) \times \G(0),\frac{dtdy}{t^2}dx,\B\right) }\leq 1}}
                    \left|\int_0^\infty \int_{\G(0)} \langle t \partial_t P_t^\lambda(g)(x+y),  G(x,y,t) \rangle \frac{dtdy}{t^2} dx \right|.
        \end{align*}
        Observe that in the last equality we have applied that $L^{q}\left((0,\infty)\times\G(0),\frac{dtdy}{t^2}dx  \right)\otimes \B$
        is a dense subspace of $L^{q}\left((0,\infty) \times \G(0),\frac{dtdy}{t^2}dx,\B\right)$. Fix $\varepsilon>0$. There exists
        $G \in L^{q}\left((0,\infty) \times \G(0),\frac{dtdy}{t^2}dx  \right)\otimes \B$, such that
        $\|G\|_{L^{q}\left((0,\infty) \times \G(0),\frac{dtdy}{t^2}dx,\B\right) }\leq 1$ and
        \begin{align*}
            \| S_{\lambda}^{q'} (g) \|_{L^{q'}(0,\infty)}
                \leq \left|\int_0^\infty \int_{\G(0)} \langle t \partial_t P_t^\lambda(g)(x+y),  G(x,y,t) \rangle \frac{dtdy}{t^2} dx \right|  + \varepsilon,
        \end{align*}
        being $G = \sum_{j=1}^n a_j G_j$, $a_j \in \B$, $G_j \in L^{q}\left((0,\infty) \times \G(0),\frac{dtdy}{t^2}dx  \right) $, $j=1, \dots, n$
        and $n \in \mathbb{N}$. If we define $\E=\spann\{a_j\}_{j=1}^n$, it is clear that
        \begin{align*}
            \langle t \partial_t P_t^\lambda(g)(x+y),  G(x,y,t) \rangle_{\B^* \times \B}
                = & \langle t \partial_t P_t^\lambda(g)(x+y),  G(x,y,t) \rangle_{\E^* \times \E}, \quad x\in (0,\infty), \ (y,t) \in \G(0),
        \end{align*}
        because every element of  $\B^*$ can be seen by restriction as an element of $\E^*$. Hence, by Hölder's inequality and \eqref{19.2} we conclude that
        \begin{align*}
            \| S_{\lambda}^{q'} (g) \|_{L^{q'}(0,\infty)}
                & \leq \left|\int_0^\infty \int_{\G(0)} \langle t \partial_t P_t^\lambda(g)(x+y),  G(x,y,t) \rangle_{\E^* \times \E} \frac{dtdy}{t^2} dx \right|  + \varepsilon \\
                & \leq \|t \partial_t P_t^\lambda(g)(x+y)\|_{L^{q'}\left((0,\infty) \times \G(0),\frac{dtdy}{t^2}dx,\E^*\right) }
                       \|G\|_{L^{q}\left((0,\infty) \times \G(0),\frac{dtdy}{t^2}dx,\E\right) } + \varepsilon \\
                & \leq C\|g\|_{L^{q'}\left((0,\infty), \E^*\right)} + \varepsilon
                \leq C\|g\|_{L^{q'}\left((0,\infty), \B^*\right)} + \varepsilon,
        \end{align*}
        and this gives \eqref{19.1}.\\

        Let $\E$ be a finite dimensional subspace of $\B$. In order to prove \eqref{19.2}, by the equivalences shown in Proposition~\ref{Lem_principal},
        we are going to see that there exists $C>0$, independent of $\E$, such that
        \begin{equation*}\label{4.3XU}
            \| S_\lambda^{q'} (g) \|_{L^1(0,\infty)} \leq C \|g\|_{H^1_o(\mathbb{R},\E^*)}, \quad g \in H^{1}_o(\mathbb{R},\E^*).
        \end{equation*}
        Fix $g \in H^{1}_o(\mathbb{R},\E^*)$.
        We denote, for every $N \in \mathbb{N}$, $\E_N=L^{q}\left( \G_N(0),\dfrac{dtdy}{t^2}, \E \right)$, where the truncated cone $\G_N(0)$
        is defined in \eqref{conoN}. By invoking \cite[Corollary III.2.13]{DU}  we have that
        $\E_N^*=L^{q'}\left( \G(0),\dfrac{dtdy}{t^2}, \E^* \right)$ and
        $\left(L^1((0,\infty),\E_N^*)\right)^* = L^\infty((0,\infty),\E_N)$.\\

        It is clear that
        $$S_\lambda^{q'}(g)(x)
            = \lim_{N \to \infty} \left( \int_{\G_N(0)} \|t \partial_t P_t^\lambda(g)(x+y) \|_{\E^*}^{q'} \frac{dtdy}{t^2} \right)^{1/q'}, \quad x \in (0,\infty).$$

        Let $m \in \mathbb{N}$ and $N \in \mathbb{N}$. Assume that $G \in L^\infty_c([0,\infty),\E^*)$. Estimations \eqref{(*)}, \eqref{Pt1} and \eqref{Pt2} lead to
        \begin{align*}
            \int_0^m &  \left( \int_{\G_N(0)} \| t \partial_t P_t^\lambda (G)(x+y)\|_{\E^*}^{q'} \frac{dtdy}{t^2} \right)^{1/q'} dx \\
                \leq & C \int_0^m  \left( \int_{-N}^{N} \int_{1/N}^N
                    \left( \int_{\supp(G)} \frac{tz^\lambda}{(||x+y|-z|+t)^{\lambda+2}} \|G(z)\|_{E^*} dz \right)^{q'}  \frac{dtdy}{t^2} \right)^{1/q'} dx \\
                \leq & C \int_0^m  \left( \int_{-N}^{N} \int_{1/N}^N
                    \left( \left(\int_{\supp(G) \cap [0,2(m+N)]} + \int_{\supp(G)\cap [2(m+N),\infty)} \right) \right. \right.\\
                    & \left. \left. \times \frac{tz^\lambda}{(||x+y|-z|+t)^{\lambda+2}} \|G(z)\|_{E^*} dz \right)^{q'}  \frac{dtdy}{t^2} \right)^{1/q'} dx \\
                \leq & C \|G\|_{L^\infty((0,\infty),\E^*)} \int_0^m  \left( \int_{-N}^{N} \int_{1/N}^N
                      \left( \left( \frac{(2(m+N))^\lambda}{t^{\lambda+1}}  + \frac{1}{t} \right) \int_{\supp(G)} dz \right)^{q'}  \frac{dtdy}{t^2} \right)^{1/q'} dx\\
                \leq & C  \|G\|_{L^\infty((0,\infty),\E^*)} |\supp(G)|,
        \end{align*}
        where $C>0$ does not depend on $G$.\\

        Hence, recalling the atomic representation of the elements of $H^1_o(\mathbb{R},\E^*)$ we deduce that
        $t\partial_t P_t^\lambda(g) \in L^1\left((0,m),L^{q'}\left(\G_N(0),\dfrac{dtdy}{t^2},\E^*\right)\right)$.\\

        According to \cite[Lemma 2.3]{GLY} we have that
        \begin{align*}
            & \left\|  \left( \int_{\G_N(0)} \| t \partial_t P_t^\lambda (g)(x+y)\|_{\E^*}^{q'} \frac{dtdy}{t^2} \right)^{1/q'} \right\|_{L^1(0,m)} \\
            & \qquad \qquad = \sup_{\substack{ h \in L^\infty((0,m),\E_N) \\ \|h\|_{L^\infty((0,m),\E_N)} \leq 1}}
               \left| \int_0^m \int_{\G_N(0)} \langle t \partial_t P_t^\lambda (g)(x+y) , h(x,y,t) \rangle \frac{dtdy}{t^2} dx \right|.
        \end{align*}
        Let $h \in L^\infty((0,m),\E_N)$ such that $\|h\|_{L^\infty((0,m),\E_N)} \leq 1$. By using Hölder's inequality and repeating the above
        manipulations we can see that
        $$\int_0^m \int_{\G_N(0)} \int_0^\infty  \left| \langle t \partial_t P_t^\lambda (x+y,z)g(z) , h(x,y,t) \rangle \right| dz \frac{dtdy}{t^2} dx
            < \infty.$$
        Then, we can interchange the order of integration to write
        \begin{align*}
           &   \int_0^m \int_{\G_N(0)} \langle t \partial_t P_t^\lambda(g)(x+y) , h(x,y,t) \rangle  \frac{dtdy}{t^2} dx
            = \int_0^\infty \langle g(z) , \Psi_N( \chi_{(0,m)}(x) h)(z) \rangle  dz,
        \end{align*}
        where, as above,
        $$\Psi_N(H)(z)
            = \int_{\G_N(0)} \int_0^\infty t \partial_t P_t^\lambda(x+y,z) H(x,y,t) dx \frac{dtdy}{t^2}, \quad z \in (0,\infty).$$

        According to \eqref{(*)} and \eqref{3.2a} we get
        \begin{align*}
            \| \Psi_N( \chi_{(0,m)}(x) h)(z) \|_{\E_N}
                & \leq \int_0^m \int_{\G_N(0)}  |t \partial_t P_t^\lambda(x+y,z)| \ \|h(x,y,t)\|_\E   \frac{dtdy}{t^2} dx\\
                & \leq C \int_0^m \int_{\G_N(0)}  \frac{t}{(|x+y|-z)^2+t^2} \ \|h(x,y,t)\|_\E   \frac{dtdy}{t^2} dx\\
                & \leq C \int_0^m \left( \int_{-N}^{N} \int_{1/N}^N  \left(\frac{t}{(|x+y|-z)^2+t^2}\right)^{q'}  \frac{dtdy}{t^2} \right)^{1/q'} dx
                  \leq C, \ z \in (0,\infty),
        \end{align*}
        where $C>0$ depends on $m$ and $N$. Thus, this function is locally integrable.\\

        Suppose for a moment that there exists $C>0$ independent of $\E$, $m$ and $N \in \mathbb{N}$ such that
        \begin{equation}\label{objetivo6}
            \sup_{I} \frac{1}{|I|} \int_0^{|I|} \int_I \|t \partial_t P_t^\lambda(\Psi_N(\chi_{(0,m)}h))(x)\|_\E^q \frac{dxdt}{t}
                \leq C,
        \end{equation}
        where the supremum is taken over all the bounded intervals $I \subset (0,\infty)$. Since $(i)$ holds, by using duality,
        \eqref{objetivo6} leads to
        \begin{align*}
            \left| \int_0^\infty \langle g(z) , \Psi_N( \chi_{(0,m)}(x) h)(z) \rangle  dz \right|
                & \leq C \|g\|_{H^1_o(\mathbb{R},\E^*)} \| \left(\Psi_N(\chi_{(0,m)}(x) h)\right)_o\|_{BMO_o(\mathbb{R},\E)} \\
                & \leq C \|g\|_{H^1_o(\mathbb{R},\E^*)},
        \end{align*}
        where $C$ depends neither on $m,N \in \mathbb{N}$ nor on $\E$.\\

        We conclude, by taking $m \to \infty$, that
        $$\left\|  \left( \int_{\G_N(0)} \| t \partial_t P_t^\lambda (g)(x+y)\|_{\E^*}^{q'} \frac{dtdy}{t^2} \right)^{1/q'} \right\|_{L^1(0,\infty)}
            \leq C \|g\|_{H^1_o(\mathbb{R},\E^*)},$$
        where $C>0$ is independent of $N \in \mathbb{N}$, and by taking $N \to \infty$, it follows that
        \begin{align*}
            \|S_\lambda^{q'}(g)\|_{L^1(0,\infty)}
                \leq & C \|g\|_{H^1_o(\mathbb{R},\E^*)}.
        \end{align*}

        We now prove \eqref{objetivo6}. Fix $m,N \in \mathbb{N}$ and a bounded interval $I \subset (0,\infty)$. We decompose
        $H_{m}=\chi_{(0,m)}h$ as follows
        $$H_{m}
            = H_{m} \chi_{2I} + H_{m} \chi_{(0,\infty) \setminus 2I}
            = H_{m}^1 + H_{m}^2.$$
        By proceeding as in \eqref{3.1.a} and by using Proposition~\ref{Pro5.2} we get
        \begin{align}\label{4.4}
            \frac{1}{|I|} \int_0^{|I|} & \int_I  \|t \partial_t P_t^\lambda(\Psi_N(H_{m}^1))(x)\|_\E^q \frac{dxdt}{t}
                =  \frac{1}{|I|} \int_0^{|I|} \int_I \|\Phi_N(H_{m}^1)(x,0,t)\|_\E^q \frac{dxdt}{t} \nonumber \\
                \leq & C  \frac{1}{|I|}   \|\Phi_N(H_{m}^1)\|^q_{L^q\left((0,\infty),L^q\left( \G(0),\frac{dtdy}{t^2},\E \right)\right)}
                \leq  C   \frac{1}{|I|}  \int_{2I} \|H_{m}(z,x,t)\|^q_{\E_N} dz
                \leq C  \|h\|^q_{L^\infty((0,m),\E_N)}
                \leq C,
        \end{align}
        where $C>0$ does not depend on $m$ and $N$. Here $\Phi_N$ is defined as in \eqref{(*b)}.\\

        On the other hand, for each $x,t \in (0,\infty)$,
        \begin{align*}
            \|t \partial_t P_t^\lambda(\Psi_N(H_{m}^2))(x)\|_\E
                \leq & \int_{(0,m) \setminus 2I} \int_{\G_N(0)} |k_{s,t}^\lambda(x,0;u,v)| \|h(v,u,s)\|_\E  \frac{dsdu}{s^2} dv.
        \end{align*}

        We claim that
        \begin{equation}\label{kst}
            |k^\lambda_{s,t}(x,y;u,v)|
                \leq C \frac{st}{(||x+y|-|u+v||+s+t)^3}, \quad v,x,s,t \in (0,\infty), \ u,y \in \mathbb{R}.
        \end{equation}
        Indeed, let $v,x,s,t \in (0,\infty)$ and $u,y \in \mathbb{R}$.
        Since $\{P_t^\lambda\}_{t \geq 0}$ is a semigroup of operators we have that
        $$\int_0^\infty P_t^\lambda(x,z)P_s^\lambda(z,y)dz=P_{t+s}^\lambda(x,y).$$
        Then, we can write
        \begin{align*} 
            k^\lambda_{s,t}(x,y;u,v)
                = & t s \partial_t \partial_s \int_0^\infty  P_t^\lambda(x+y,z)  P_s^\lambda(z,u+v) dz
                = t s \partial_t \partial_s P_{t+s}^\lambda(x+y,u+v) \nonumber \\
                = & t s \partial_r^2 P_r^\lambda(x+y,u+v)_{|{r=t+s}}.
        \end{align*}
        By \cite[in the bottom of the p. 280]{BFMT} it follows that \eqref{kst} holds.\\

        From \eqref{kst} we deduce that, for every $x,v,t \in (0,\infty)$,
        \begin{align*}
            \int_{\G_N(0)}& |k_{s,t}^\lambda(x,0;u,v)| \|h(v,u,s)\|_\E  \frac{dsdu}{s^2}
                \leq  \|h\|_{L^\infty((0,m),\E_N)} \left( \int_{\G_N(0)} |k_{s,t}^\lambda(x,0;u,v)|^{q'} \frac{dsdu}{s^2} \right)^{1/q'} \\
                \leq & C  t  \left( \int_{\mathbb{R}} \int_{|u|}^\infty \frac{dsdu}{(|x-|u+v||+s+t)^{2q'+2}}  \right)^{1/q'}
                \leq  C  t  \left( \int_{\mathbb{R}} \frac{du}{(|x-|u+v||+|u|+t)^{2q'+1}}  \right)^{1/q'}.
        \end{align*}
        In order to estimate the last integral we distinguish several cases. We have that
        \begin{align*} 
            \int_0^\infty  \frac{du}{(|x-u-v|+u+t)^{2q'+1}}
                \leq & \int_0^{|x-v|} \frac{du}{(|x-v|+t)^{2q'+1}}  + \int_{|x-v|}^\infty \frac{du}{(-|x-v|+2u+t)^{2q'+1}} \nonumber \\
                 \leq &  \frac{C}{(|x-v|+t)^{2q'}}, \quad x,v,t \in (0,\infty),
        \end{align*}
        and
        \begin{align*} 
            \int_0^\infty & \frac{du}{(|x-|u-v||+u+t)^{2q'+1}}
                \leq  \int_0^{\max\{\min\{v,x-v\},0\}} \frac{du}{(x-v+t)^{2q'+1}}  \nonumber \\
                & + \int_{\max\{\min\{v,x-v\},0\}}^v \frac{du}{(2u+v-x+t)^{2q'+1}}
                 +    \int_v^{x+v} \frac{du}{(x+v+t)^{2q'+1}}
                     + \int_{x+v}^\infty \frac{du}{(2u-x-v+t)^{2q'+1}}  \nonumber \\
               \leq &   \frac{C}{(|x-v|+t)^{2q'}}, \quad x,v,t \in (0,\infty).
        \end{align*}
        Hence, we obtain
        $$ \int_{\G_N(0)} |k_{s,t}^\lambda(x,0;u,v)| \|h(v,u,s)\|_\E  \frac{dsdu}{s^2}
            \leq C \frac{t}{(|x-v|+t)^{2}}, \quad x,v,t \in (0,\infty).$$
        Finally, we deduce
        \begin{align}\label{45.1}
            \frac{1}{|I|} \int_0^{|I|} \int_I \|t \partial_t P_t^\lambda(\Psi_N(H_{m}^2))(x)\|_\E^q \frac{dxdt}{t}
                \leq C & \frac{1}{|I|} \int_0^{|I|} \int_I t^{q-1} \left(\int_{(0,\infty) \setminus 2I} \frac{dv}{|x-v|^2} \right)^q dxdt
                \leq C,
        \end{align}
        where $C>0$ depends neither on $m,N$  nor on $I$.\\

        By combining \eqref{4.4} and \eqref{45.1} we establish \eqref{objetivo6}.\\

        Thus the proof of this theorem is completed.
    \begin{flushright}
        \qed
    \end{flushright}

    \section{Appendices} \label{sec:appendix}

    In this section we show two results that have been very useful in the proof of Theorems~\ref{Th_3.1_OX} and \ref{Th_4.1_OX}.

    \subsection{Appendix 1}\label{subsec:app1}

    The following property was used in Subsection~\ref{subsec:1->2}.

    \begin{Prop}\label{Prop_BMO_H1}
        Let $\B$ be a Banach space, $\lambda>0$ and $2 \leq q < \infty$. Suppose that
        \begin{equation}\label{BMO-L^inf}
            \|S^q_\lambda(f)\|_{BMO(0,\infty)} \leq C \|f\|_{L^\infty((0,\infty),\B)}, \quad f \in L^\infty_{c}((0,\infty),\B).
        \end{equation}
        Then,
        \begin{equation}\label{H1-L1}
            \|S^q_{\lambda,+}(f)\|_{L^1(0,\infty)} \leq C \|f\|_{H^1_o(\mathbb{R},\B)}, \quad f \in H^1_o(\mathbb{R},\B).
        \end{equation}
    \end{Prop}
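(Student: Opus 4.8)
The plan is to carry out, in the half-line Bessel setting, the familiar passage ``$L^\infty\to BMO$ bound $\Rightarrow$ $H^1\to L^1$ bound'' via the atomic description of $H^1_o(\mathbb{R},\B)$. Recall that $f\in H^1_o(\mathbb{R},\B)$ can be written $f=\sum_{j}\lambda_j a_j$ in $L^1((0,\infty),\B)$ with $\sum_j|\lambda_j|\leq C\|f\|_{H^1_o(\mathbb{R},\B)}$, where each $a_j$ is either an $\infty$-atom supported in a bounded interval $I_j\subset(0,\infty)$ (atoms of the first kind) or $a_j=b_j\chi_{(0,\delta_j)}/\delta_j$ with $\|b_j\|_\B=1$ (atoms of the second kind). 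Since $P_t^\lambda$ is $L^1$-contractive and, for fixed $t,y>0$, $g\mapsto\partial_t P_t^\lambda(g)(y)$ is a bounded functional on $L^1((0,\infty),\B)$ (by the kernel bounds behind \eqref{Pt1}--\eqref{Pt2}), the partial sums converge pointwise in $(y,t)$; so Fatou's lemma and the triangle inequality in $L^q(\G_+(x),\frac{dtdy}{t^2},\B)$ give $S^q_{\lambda,+}(f)\leq\sum_j|\lambda_j|S^q_{\lambda,+}(a_j)$ a.e.\ on $(0,\infty)$, and it suffices to bound $\|S^q_{\lambda,+}(a)\|_{L^1(0,\infty)}$ uniformly over atoms. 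Atoms of the second kind are immediate: the dilation covariance $P^\lambda_{rt}(rx,ry)=r^{-1}P^\lambda_t(x,y)$ coming from \eqref{Bessel_kernel} yields $S^q_{\lambda,+}(a)(x)=\delta_j^{-1}\|b_j\|_\B\,S^q_{\lambda,+}(\chi_{(0,1)})(x/\delta_j)$, whence $\|S^q_{\lambda,+}(a)\|_{L^1(0,\infty)}=\|S^q_{\lambda,+}(\chi_{(0,1)})\|_{L^1(0,\infty)}$, which is finite (near the support it has at worst an integrable logarithmic singularity at the jump, and for large $x$ it decays like $x^{-\lambda-1}$ thanks to the factor $z^\lambda$ in \eqref{Pt1}--\eqref{Pt2}). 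So from now on $a$ is an $\infty$-atom supported in a bounded interval $I\subset(0,\infty)$, with centre $x_I$ and $\|a\|_{L^\infty((0,\infty),\B)}\leq 1/|I|$.

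Fix dilates $\widetilde I\subset\widehat I$ of $I$, intersected with $(0,\infty)$, of measure $\sim|I|$ and such that $\mathrm{dist}(x,I)\gtrsim|I|$ for $x\notin\widetilde I$. For the far region I would use the vanishing moment, writing $t\partial_t P^\lambda_t(a)(y)=\int_I[t\partial_t P^\lambda_t(y,z)-t\partial_t P^\lambda_t(y,x_I)]a(z)\,dz$ and splitting the cone $\G_+(x)$ according to whether $t$ is smaller or larger than $\mathrm{dist}(x,I)$. Using the size and space-gradient bounds for the two pieces $t\partial_t P^\lambda_{t,1}(y,z)$, $t\partial_t P^\lambda_{t,2}(y,z)$ of the kernel behind \eqref{Pt1}--\eqref{Pt2} — these are the computations of \cite{BCFR2}, which, as already used in this paper, remain valid for all $2\leq q<\infty$ — one obtains pointwise estimates for $S^q_{\lambda,+}(a)$ on $(0,\infty)\setminus\widetilde I$ whose integral there is $\leq C$, with $C$ absolute; in particular $S^q_{\lambda,+}(a)\leq C/|I|$ at points at distance $\sim|I|$ from $I$.

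For the remaining region $\widetilde I$ the hypothesis \eqref{BMO-L^inf} enters. Since $a\in L^\infty_c((0,\infty),\B)$, \eqref{BMO-L^inf} gives $\|S^q_\lambda(a)\|_{BMO(0,\infty)}\leq C\|a\|_{L^\infty((0,\infty),\B)}\leq C/|I|$. Choose a subinterval $J\subset(0,\infty)$ of $\widehat I\setminus\widetilde I$ with $|J|\sim|I|$ and $\mathrm{dist}(J,I)\sim|I|$ (take $J$ on the side of larger values, which is always possible). By the far estimate and \eqref{equivalentes}, $S^q_\lambda(a)\leq 2^{1/q}S^q_{\lambda,+}(a)\leq C/|I|$ a.e.\ on $J$, so $(S^q_\lambda(a))_J\leq C/|I|$; on the other hand the $BMO$ bound controls the differences of averages over comparable intervals, $|(S^q_\lambda(a))_{\widetilde I}-(S^q_\lambda(a))_{\widehat I}|+|(S^q_\lambda(a))_J-(S^q_\lambda(a))_{\widehat I}|\leq C\|S^q_\lambda(a)\|_{BMO(0,\infty)}\leq C/|I|$. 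Hence $(S^q_\lambda(a))_{\widetilde I}\leq C/|I|$, and, using once more $S^q_{\lambda,+}\leq S^q_\lambda$ from \eqref{equivalentes}, $\int_{\widetilde I}S^q_{\lambda,+}(a)\leq\int_{\widetilde I}S^q_\lambda(a)=|\widetilde I|\,(S^q_\lambda(a))_{\widetilde I}\leq C$. Together with the far part this gives $\|S^q_{\lambda,+}(a)\|_{L^1(0,\infty)}\leq C$ uniformly, and the reduction in the first paragraph yields \eqref{H1-L1}.

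The step I expect to be the main obstacle is the far-region pointwise estimate for $\infty$-atoms of the first kind: one has to treat separately the part of $\G_+(x)$ on which $y$ stays at distance $\gtrsim\mathrm{dist}(x,I)$ from $I$ (where the cancellation $\int_I a=0$ together with the size of $t\partial_t P^\lambda_t(y,z)$ suffices) and the part on which $y$ may be comparable to $x_I$ — which happens only for $t\gtrsim\mathrm{dist}(x,I)$, and there one needs the cancellation combined with the $t$-weighted gradient decay of the kernel; this is precisely where the estimates borrowed from \cite{BCFR2} do the real work. Minor additional points are: checking that the reference interval $J$ can always be placed inside $(0,\infty)$ (automatic once $J$ is chosen on the side of larger values), and, if one prefers not to use the scaling shortcut, the routine limiting argument needed to apply \eqref{BMO-L^inf} to atoms of the second kind whose support reaches the origin.
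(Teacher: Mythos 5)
Your proposal follows essentially the same Journé-type route as the paper: reduce to atoms via the atomic description of $H^1_o(\mathbb{R},\B)$, bound the far region using the atom's cancellation and the kernel estimates around \eqref{||Pt1||}--\eqref{||Pt2||}, and extract the near-region $L^1$ bound from the hypothesis \eqref{BMO-L^inf} by comparing the average over a dilate of the support with the average over an adjacent reference interval $J$ of comparable length. The two minor differences are cosmetic: your dilation-covariance shortcut $P^\lambda_{rt}(rx,ry)=r^{-1}P^\lambda_t(x,y)$ does neatly dispose of the atoms $b\chi_{(0,\delta)}/\delta$ (the paper instead runs the same near/far split for those, with the size bound \eqref{||Pt1||} replacing cancellation in the far part), but the $t$-weighted spatial-gradient estimate for $\partial_t\partial_u P^\lambda_{t,1}$ that carries the far-region cancellation argument is actually computed from scratch in the paper's Appendix 1, not borrowed from \cite{BCFR2} as you suggest.
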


    \begin{proof}
        According to \eqref{equivalentes} to show \eqref{H1-L1} it is sufficient to see that, for every $f \in H^1_o(\mathbb{R},\B)$,
        \begin{equation}\label{5.3}
            \|S^q_{\lambda}(f)\|_{L^1(0,\infty)} \leq C \|f\|_{H^1_o(\mathbb{R},\B)}.
        \end{equation}
        Let $f \in H^1_o(\mathbb{R},\B)$. We write $f=\sum_{j=1}^\infty \lambda_j a_j$ on $(0,\infty)$ where, for every $j \in \mathbb{N}$,
        $a_j$ is an $o$-atom and $\lambda_j \in \mathbb{C}$, being $\sum_{j=1}^\infty |\lambda_j|<\infty$. Here by $o$-atom we mean the class
        of atoms defined in the introduction, as follows: $a$ is an
        $\infty$-atom supported on $(0,\infty)$ or $a=b \chi_{(0,\delta)}/\delta$, for a certain $b \in \B$, being $\|b\|_\B=1$
        and $\delta>0$. \\

        We have that
        $$\partial_t \int_0^\infty  P_t^\lambda(y,z) f(z)dz
            =  \sum_{j=1}^\infty \lambda_j \int_0^\infty \partial_t P_t^\lambda(y,z) a_j(z)dz , \quad t\in (0,\infty), \ y \in \mathbb{R}.$$
        This equality is justified because the serie
        $$\sum_{j=1}^\infty |\lambda_j| \int_0^\infty |\partial_t P_t^\lambda(y,z)| \|a_j(z)\|_\B dz$$
        is uniformly convergent in $y \in \mathbb{R}$ ant $t \in K$, for every compact subset $K \subset (0,\infty)$. Indeed, let $K$ be a compact subset of
        $(0,\infty)$. By \eqref{(*)} and \eqref{3.2a} we get
        $$|\partial_t P_t^\lambda(y,z)|
            \leq C, \quad t \in K, \ y \in \mathbb{R}, \ z \in (0,\infty).$$
        Then, since $a_j$ is an $o$-atom, for every $j \in \mathbb{N}$, it follows that
        \begin{align*}
            \sum_{j=1}^\infty |\lambda_j| \int_0^\infty |\partial_t P_t^\lambda(y,z)| \|a_j(z)\|_\B dz
                \leq C \sum_{j=1}^\infty |\lambda_j|
                < \infty, \quad t \in K, \ y \in \mathbb{R}.
        \end{align*}
        Hence, we can write
        \begin{align*}
            S_\lambda^q(f)(x)
                =  & \left( \int_{\G(x)} \left\| t \partial_t P_t^\lambda\left( \sum_{j=1}^\infty \lambda_j a_j \right) (y) \right\|^q_\B  \frac{dtdy}{t^2}\right)^{1/q} \nonumber \\
                =  & \left( \int_{\G(x)} \left\|\sum_{j=1}^\infty \lambda_j t \partial_t P_t^\lambda\left(  a_j \right) (y) \right\|^q_\B  \frac{dtdy}{t^2}\right)^{1/q}
                \leq   \sum_{j=1}^\infty |\lambda_j| S_\lambda^q(a_j)(x), \quad x \in (0,\infty).
        \end{align*}
        In order to see \eqref{5.3} it is sufficient to show that there exists $C>0$ such that
        \begin{equation}\label{L1-atom}
            \|S_\lambda^q(a)\|_{L^1(0,\infty)} \leq C,
        \end{equation}
        for every $o$-atom $a$. \\

        To prove \eqref{L1-atom} we use a procedure employed by Journé (\cite[p.  49-51]{J}).
        Let $a$ be an $o$-atom supported in the interval $I=(z_0-|I|/2,z_0+|I|/2) \subset (0,\infty)$ and
        $\|a\|_{L^\infty((0,\infty),\B)} \leq 1/|I|$. We denote again $2I=(z_0-|I|,z_0+|I|) \cap (0,\infty)$ and
        by $J$ we represent the interval $(z_0+|I|,z_0+|I|+|2I|)$.\\

        By \cite[Lemma 1.1 (b), p. 217]{G} and \eqref{BMO-L^inf} we get
        \begin{align*}
            \frac{1}{|2I|} \int_{2I} S_\lambda^q(a)(x)dx
                \leq & \frac{1}{|2I|} \int_{2I} |S_\lambda^q(a)(x) - S_\lambda^q(a)_{2I} | dx + |S_\lambda^q(a)_{2I} - S_\lambda^q(a)_{J}| + S_\lambda^q(a)_{J} \\
                \leq & C \|S_\lambda^q(a)\|_{BMO(0,\infty)} + S_\lambda^q(a)_{J}
                \leq C \|a\|_{L^\infty((0,\infty),\B)} + S_\lambda^q(a)_{J}
                \leq \frac{C}{|I|} + S_\lambda^q(a)_{J}.
        \end{align*}
        Hence, since $J \subset (0,\infty) \setminus 2I$ and $|J|=|2I|$,
        \begin{align*}
            \|S_\lambda^q(a)\|_{L^1(0,\infty)}
                = & \int_{2I} S_\lambda^q(a)(x)dx + \int_{(0,\infty) \setminus 2I} S_\lambda^q(a)(x)dx
                \leq C + |2I| S_\lambda^q(a)_{J} + \int_{(0,\infty) \setminus 2I} S_\lambda^q(a)(x)dx \\
                \leq & C \left(1 + \int_{(0,\infty) \setminus 2I} S_\lambda^q(a)(x)dx \right).
        \end{align*}
        By writing $P_t^\lambda(y,z)=P_{t,1}^\lambda(y,z) + P_{t,2}^\lambda(y,z)$, $t,z \in (0,\infty)$, $y \in \mathbb{R}$, where
        $$P_{t,1}^\lambda(y,z)
                = \frac{2\lambda (|y|z)^\lambda t}{\pi} \int_0^{\pi/2} \frac{(\sin \theta)^{2\lambda-1}}{[(|y|-z)^2+t^2+2|y|z(1-\cos \theta)]^{\lambda+1}}d\theta, $$
        it follows that
        \begin{align}\label{objetivo5}
            \int_{(0,\infty) \setminus 2I} S_\lambda^q(a)(x)dx
                \leq & \int_{(0,\infty) \setminus 2I}  \left\| \int_0^\infty t \partial_t P_{t,1}^\lambda(y,z)a(z)dz \right\|_{L^q\left(\G(x),\frac{dtdy}{t^2},\B\right)} dx \nonumber \\
                & + \int_{(0,\infty) \setminus 2I}  \left\| \int_0^\infty t \partial_t P_{t,2}^\lambda(y,z)a(z)dz \right\|_{L^q\left(\G(x),\frac{dtdy}{t^2},\B\right)} dx .
        \end{align}
        According to \eqref{(*)} and \eqref{||Pt2||}  and using Minkowski's inequality we have that
        \begin{align*}
            \int_{(0,\infty) \setminus 2I} & \left\| \int_0^\infty t \partial_t P_{t,2}^\lambda(y,z)a(z)dz \right\|_{L^q\left(\G(x),\frac{dtdy}{t^2},\B\right)} dx \\
                \leq & 2 \int_{(0,\infty) \setminus 2I} \int_0^\infty \left\|  t \partial_t P_{t,2}^\lambda(y,z) \right\|_{L^q\left(\G_+(x),\frac{dtdy}{t^2}\right)} \|a(z)\|_\B dz dx\\
                \leq & C \int_{(0,\infty) \setminus 2I} \int_I \frac{z^\lambda}{(x+z)^{\lambda+1}} \|a(z)\|_\B dz dx
                \leq  \frac{C}{|I|} \int_I z^\lambda \int_{(0,\infty) \setminus 2I}  \frac{1}{(x+z)^{\lambda+1}}  dx dz \\
                \leq & \frac{C}{|I|} \int_I z^\lambda \left( \frac{1}{(z_0+|I|+z)^{\lambda}}  + c(I) \left(\frac{1}{z^\lambda} + \frac{1}{(z_0-|I|+z)^{\lambda}} \right)\right) dz
                \leq C,
        \end{align*}
        where $C>0$ does not depend on $a$. Here $c(I)=0$, when $z_0 \leq |I|$, and $c(I)=1$, provided that $z_0>|I|$.\\

        We now deal with the integral involving $P_{t,1}^\lambda$ in \eqref{objetivo5}.
        Assume first that $a=b\chi_{(0,\delta)}/\delta$, where $\delta>0$ and $b \in \B$ such that $\|b\|_\B=1$.
        By using Minkowski's inequality and \eqref{||Pt1||} it follows that
        \begin{align*}
            \int_{(0,\infty) \setminus 2I} & \left\| \int_0^\infty t \partial_t P_{t,1}^\lambda(y,z)a(z)dz \right\|_{L^q\left(\G(x),\frac{dtdy}{t^2},\B\right)} dx
                \leq \frac{\|b\|_\B}{\delta}\int_{2\delta}^\infty  \left\| \int_0^\delta t \partial_t P_{t,1}^\lambda(y,z)dz \right\|_{L^q\left(\G(x),\frac{dtdy}{t^2}\right)} dx \\
                \leq & \frac{2}{\delta}\int_{2\delta}^\infty \int_0^\delta \left\|  t \partial_t P_{t,1}^\lambda(y,z) \right\|_{L^q\left(\G_+(x),\frac{dtdy}{t^2}\right)} dz dx
                \leq  \frac{C}{\delta} \int_0^\delta \delta^\lambda \int_{2\delta}^\infty \frac{dx}{|x-z|^{\lambda+1}} dz
                \leq C,
        \end{align*}
        where $C>0$ is independent of $a$.\\

        Suppose now that $\int_I a(z)dz=0$. By the fundamental theorem of calculus and Minkowski's inequality we can write
        \begin{align*}
            \int_{(0,\infty) \setminus 2I}  & \left\| \int_I t\partial_t P_{t,1}^\lambda(y,z)  a(z)dz \right\|_{L^q\left(\G(x),\frac{dtdy}{t^2},\B\right)} dx \\
                = & \int_{(0,\infty) \setminus 2I}   \left\| \int_I \left[t\partial_t P_{t,1}^\lambda(y,z) - t\partial_t P_{t,1}^\lambda(y,z_0)\right] a(z)dz \right\|_{L^q\left(\G(x),\frac{dtdy}{t^2},\B\right)} dx\\
                \leq  & \int_{(0,\infty) \setminus 2I}   \int_I \|a(z)\|_B  \|t\partial_t P_{t,1}^\lambda(y,z) - t\partial_t P_{t,1}^\lambda(y,z_0)\|_{L^q\left(\G(x),\frac{dtdy}{t^2}\right)}  dz dx \\
                \leq  & \frac{1}{|I|} \int_{(0,\infty) \setminus 2I} \int_I \left| \int_{z_0}^z \|t\partial_t \partial_u P_{t,1}^\lambda(y,u)\|_{L^q\left(\G(x),\frac{dtdy}{t^2}\right)} du \right| dz dx.
        \end{align*}
        We are going to see that
        \begin{equation}\label{5.6}
            \frac{1}{|I|} \int_{(0,\infty) \setminus 2I} \int_I \left| \int_{z_0}^z \|t\partial_t \partial_u P_{t,1}^\lambda(y,u)\|_{L^q\left(\G(x),\frac{dtdy}{t^2}\right)} du \right| dz dx
                \leq C,
        \end{equation}
        where $C>0$ does not depend on $I$.\\

        We have that, for every $u,t \in (0,\infty)$ and $y \in \mathbb{R}$,
        \begin{align*}
            \partial_t \partial_u P_{t,1}^\lambda(y,u)
                = & \frac{2\lambda^2 |y|^\lambda u^{\lambda-1}}{\pi} \int_0^{\pi/2} \frac{(\sin \theta)^{2\lambda-1}}{[(|y|-u)^2+t^2+2|y|u(1-\cos \theta)]^{\lambda+1}} d\theta \\
                & - \frac{4\lambda^2(\lambda+1) |y|^\lambda u^{\lambda-1}t^2}{\pi} \int_0^{\pi/2} \frac{(\sin \theta)^{2\lambda-1}}{[(|y|-u)^2+t^2+2|y|u(1-\cos \theta)]^{\lambda+2}} d\theta \\
                & - \frac{4\lambda(\lambda+1) (|y|u)^\lambda }{\pi} \int_0^{\pi/2} \frac{(\sin \theta)^{2\lambda-1}[(u-|y|) +|y|(1-\cos \theta)]}{[(|y|-u)^2+t^2+2|y|u(1-\cos \theta)]^{\lambda+2}} d\theta\\
                & + \frac{8\lambda(\lambda+1)(\lambda+2) (|y|u)^\lambda t^2}{\pi} \int_0^{\pi/2} \frac{(\sin \theta)^{2\lambda-1}[(u-|y|) +|y|(1-\cos \theta)]}{[(|y|-u)^2+t^2+2|y|u(1-\cos \theta)]^{\lambda+3}} d\theta.
        \end{align*}
        Since $\sin \theta \sim \theta$ and $2(1-\cos \theta) \sim \theta^2$, when $\theta \in [0,\pi/2]$, it follows that
        \begin{align*}
            |\partial_t \partial_u P_{t,1}^\lambda(y,u)|
                \leq  &C \left(u^{\lambda-1} \int_0^{\pi/2} \frac{|y|^\lambda \theta^{2\lambda-1}}{[(|y|-u)^2+t^2+|y|u\theta^2]^{\lambda+1}} d\theta
                         + u^{\lambda} \int_0^{\pi/2} \frac{|y|^\lambda \theta^{2\lambda-1}}{[(|y|-u)^2+t^2+|y|u\theta^2]^{\lambda+3/2}} d\theta \right)\\
                = & C \left( A_t^\lambda(y,u) + B_t^\lambda(y,u) \right), \quad u,t \in (0,\infty), \ y \in \mathbb{R}.
        \end{align*}

        We analyze firstly $A_t^\lambda(y,u)$. Assume $0 < \lambda \leq 1$. We get
        \begin{align*}
            \frac{|y|^\lambda \theta^{\lambda}}{[(|y|-u)^2+t^2+|y|u\theta^2]^{\lambda+1}}
                \leq C \frac{(|y|u\theta^2)^{\lambda/2} }{[(|y|-u)^2+t^2+|y|u\theta^2]^{\lambda+1}}
                \leq \frac{C}{(||y|-u|+t)^{\lambda+2}}, \quad 0 < |y| < 2u,
        \end{align*}
        and
        \begin{align*}
            \frac{|y|^\lambda }{[(|y|-u)^2+t^2+|y|u\theta^2]^{\lambda+1}}
                \leq C \frac{|y|^{\lambda} }{[(|y|-u)^2+t^2]^{\lambda+1}}
                \leq \frac{C}{(||y|-u|+t)^{\lambda+2}}, \quad 0 < 2u < |y|.
        \end{align*}
        Hence,
        \begin{align*}
            A_t^\lambda(y,u)
                \leq C \frac{u^{\lambda-1}}{(||y|-u|+t)^{\lambda+2}}, \quad u,t \in (0,\infty), \ y \in \mathbb{R}.
        \end{align*}
        By proceeding as in \eqref{bound1} we obtain
        $$\|t A_t^\lambda(y,u) \|_{L^q\left(\G(x),\frac{dtdy}{t^2}\right)}
            \leq C \|t A_t^\lambda(y,u) \|_{L^q\left(\G_+(x),\frac{dtdy}{t^2}\right)}
            \leq C \frac{u^{\lambda-1}}{|x-u|^{\lambda+1}}, \quad u,x \in (0,\infty).$$
        Also notice that
        \begin{align*}
            \int_{(0,\infty) \setminus 2I} \frac{dx}{|x-z_0|^{\lambda+1}}
                \leq \frac{C}{|I|^\lambda},
        \end{align*}
        and that $|x-u| \geq |x-z_0|/2$ provided that $u \in I$ and $x \in (0,\infty)\setminus 2I$. By combining these facts we conclude
        \begin{align} \label{5.7}
            \frac{1}{|I|} \int_{(0,\infty) \setminus 2I} &\int_I \left| \int_{z_0}^z \|t A_t^\lambda(y,u) \|_{L^q\left(\G(x),\frac{dtdy}{t^2}\right)} du \right| dz dx
                \leq   \frac{C}{|I|} \int_{(0,\infty) \setminus 2I} \frac{dx}{|x-z_0|^{\lambda+1}}  \int_I \left| \int_{z_0}^z  u^{\lambda-1} du \right| dz \nonumber \\
                \leq &  \frac{C}{|I|^{\lambda+1}}\int_I \left| z^\lambda - z_0^\lambda \right| dz
                =  \frac{C}{|I|^{\lambda+1}} \left(\int_0^{|I|/2}  [(z_0+w)^\lambda - z_0^\lambda] dw +  \int_{-|I|/2}^0  [z_0^\lambda  - (z_0+w)^\lambda] dw \right) \nonumber\\
                \leq &   \frac{C}{|I|^{\lambda+1}} \int_0^{|I|/2}  w^\lambda dw
                \leq C.
        \end{align}
        We have used that $(a+b)^\alpha \leq a^\alpha + b^\alpha$, when $a,b>0$ and $0 < \alpha \leq 1$.\\

        Suppose now $\lambda>1$. We have that
        \begin{align*}
            A_t^\lambda(y,u)
                = &  \int_0^{\pi/2} \left(\frac{|y|u\theta^2}{(|y|-u)^2+t^2+|y|u\theta^2} \right)^{\lambda-1}
                         \frac{|y|\theta}{(|y|-u)^2+t^2+|y|u\theta^2)^2} d\theta \\
                \leq C & \int_0^{\pi/2}  \frac{|y|\theta}{[(|y|-u)^2+t^2+|y|u\theta^2]^{2}}d\theta
                =C A_t^1(y,u) , \quad u,t \in (0,\infty), \ y \in \mathbb{R}.
        \end{align*}
        Then, by using what we have proved in the above case we get
        \begin{align}\label{5.8}
            \frac{1}{|I|} \int_{(0,\infty) \setminus 2I} &\int_I \left| \int_{z_0}^z \|t A_t^\lambda(y,u) \|_{L^q\left(\G(x),\frac{dtdy}{t^2}\right)} du \right| dz dx
                \leq C.
        \end{align}

        Finally, to treat the term $B_t^\lambda(y,u)$ we make the change of variables $\theta = \phi \sqrt{(|y|-u)^2 + t^2}/\sqrt{|y|u}$ and we obtain
        \begin{align*}
            B_t^\lambda(y,u)
                \leq &  \frac{C}{(||y|-u|+t)^3} \int_0^\infty \frac{\phi^{2\lambda-1}}{(1+\phi^2)^{\lambda+1/2}} d\phi
                \leq  \frac{C}{(||y|-u|+t)^3}, \quad u,t \in (0,\infty), \ y \in \mathbb{R}.
        \end{align*}
        As above it follows that
        \begin{align}\label{5.9}
            \frac{1}{|I|} \int_{(0,\infty) \setminus 2I} &\int_I \left| \int_{z_0}^z \|t B_t^\lambda(y,u) \|_{L^q\left(\G(x),\frac{dtdy}{t^2}\right)} du \right| dz dx
                \leq C.
        \end{align}
        Note that the constants $C$ in \eqref{5.7}, \eqref{5.8} and \eqref{5.9} do not depend on $I$. Thus \eqref{5.6} is shown.\\

        Putting together all the estimations that we have just obtained, \eqref{L1-atom} is proved and the proof of this proposition is finished.
    \end{proof}

    \subsection{Appendix 2}\label{subsec:app2}

        In this part we study in detail the operator $\Phi_N$, $N \in \mathbb{N}$, which appears in Subsections~\ref{subsec:1->2} and \ref{subsec2:1->2}.
        We prove that the sequence $\{\Phi_N\}_{N \in \mathbb{N}}$ can be seen as a uniform (in a suitable sense) family of vector
        valued Calderón-Zygmund operators. Consequently, the mapping properties that we need for $\Phi_N$, $N \in \mathbb{N}$, follow from the general theory
        (\cite{RbRuT}).\\

        Let $\B$ be a Banach space, $1 < q < \infty$ and $N \in \mathbb{N}$. For every $h \in L^\infty_c\left((0,\infty),L^q\left(\G_N(0), \dfrac{dydt}{t^2},\B\right)\right)$
        we define
        $$ \Phi_N(h)(x,y,t)
            = \int_0^\infty  \int_{\G_N(0)} k^\lambda_{s,t}(x,y;u,v)  h(v,u,s)\frac{dsdu}{s^2} dv,
            \quad x,t \in (0,\infty), \ y \in \mathbb{R},$$
        where
        $$ k^\lambda_{s,t}(x,y;u,v) = \int_0^\infty t \partial_t P_t^\lambda(x+y,z) s \partial_s P_s^\lambda(v+u,z) dz,
        \quad v,x,s,t \in (0,\infty), \ u,y \in \mathbb{R}.$$
        In Subsection~\ref{subsec:1->2} it was proved that the integral defining $ \Phi_N(h)(x,y,t)$ is absolutely convergent for every
        $h \in L^\infty_c\left((0,\infty),L^q\left(\G_N(0), \dfrac{dydt}{t^2},\B\right)\right)$, $x,t \in (0,\infty)$, $y \in \mathbb{R}$
        and $2 \leq q < \infty$. Notice that this property is also true for $1<q<2$.\\

        To simplify the notation we write in the sequel $\F^q=L^q\left(\G(0),\dfrac{dt dy}{y^2},\B\right)$.

        \begin{Lem}\label{Lem5.1}
            Let $\B$ be a Banach space, $\lambda>0$ and $1<q<\infty$. Then, for every $N \in \mathbb{N}$, the operator $\Phi_N$ is bounded from
            $L^q((0,\infty),\F^q)$ into itself. Moreover, there exists $C>0$ such that, for every $N \in \mathbb{N}$,
            $$\|\Phi_N(g)\|_{L^q((0,\infty),\F^q)}
                \leq C \|g\|_{L^q((0,\infty),\F^q)}, \quad g \in L^q((0,\infty),\F^q).$$
        \end{Lem}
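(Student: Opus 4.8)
The plan is to dominate $\Phi_N$ pointwise by a positive scalar integral operator and then apply Schur's test, producing a bound independent of $N$. By the density of $L^\infty_c\left((0,\infty),L^q(\G_N(0),\tfrac{dtdy}{t^2},\B)\right)$ it is enough to work with $g$ in this class, for which the defining integral is absolutely convergent. Since the Bochner integral satisfies $\left\|\int F\,d\mu\right\|_\B\le\int\|F\|_\B\,d\mu$, for a.e. $(x,y,t)$ one has
\[\|\Phi_N(g)(x,y,t)\|_\B\le\int_0^\infty\!\int_{\G_N(0)}|k^\lambda_{s,t}(x,y;u,v)|\,\|g(v,u,s)\|_\B\,\frac{ds\,du}{s^2}\,dv=:T_N(\|g\|_\B)(x,y,t),\]
where $T_N$ is the positive linear operator on scalar functions on $(0,\infty)\times\G(0)$, with respect to the measure $d\nu=dx\,\tfrac{dt\,dy}{t^2}$, whose kernel is $|k^\lambda_{s,t}(x,y;u,v)|\chi_{\G_N(0)}(u,s)$. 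As $G\mapsto\|G(\cdot)\|_\B$ preserves the norm between $L^q((0,\infty),\F^q)$ and $L^q((0,\infty)\times\G(0),d\nu)$, the lemma reduces to showing that $T_N$ is bounded on $L^q((0,\infty)\times\G(0),d\nu)$ with norm controlled uniformly in $N$.

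For this I would invoke Schur's test with the constant weight $\omega\equiv1$: it suffices to verify
\[\sup_{(x,y,t)}\int_0^\infty\!\int_{\G(0)}|k^\lambda_{s,t}(x,y;u,v)|\,\frac{ds\,du}{s^2}\,dv\le C,\qquad\sup_{(v,u,s)}\int_0^\infty\!\int_{\G(0)}|k^\lambda_{s,t}(x,y;u,v)|\,\frac{dt\,dy}{t^2}\,dx\le C,\]
with $C$ not depending on $N$ — note that enlarging $\G_N(0)$ to $\G(0)$ only makes the first supremum bigger, so it suffices to estimate the untruncated integrals. Both estimates rest on the pointwise bound \eqref{kst}, $|k^\lambda_{s,t}(x,y;u,v)|\le C\,st\,(||x+y|-|u+v||+s+t)^{-3}$, together with the elementary identities $\int_{\mathbb{R}}(||w|-a|+c)^{-3}\,dw\le 2c^{-2}$ (for $a\ge0$, $c>0$) and $\int_0^\infty(\tau+c)^{-2}\,d\tau=c^{-1}$. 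For the first supremum, writing $w'=u+v$ and applying Fubini in the pair $(v,w')$ — for fixed $w'$ and $s$ the slice $\{v>0:|v-w'|<s\}$ has length at most $2s$ — the inner integral collapses to $\int_0^\infty 2t\,(s+t)^{-2}\,ds=2$. The second supremum is symmetric: put $w=x+y$, integrate first in $x$ over a slice of length at most $2t$, and reduce to $\int_0^\infty 2s\,(s+t)^{-2}\,dt=2$.

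Combining the two bounds, Schur's test gives $\|T_N\|_{L^q\to L^q}\le C^{1/q'}C^{1/q}=C$ independently of $N$, and hence, by the domination above, $\|\Phi_N(g)\|_{L^q((0,\infty),\F^q)}\le C\|g\|_{L^q((0,\infty),\F^q)}$ for every $N\in\mathbb{N}$; a density argument then extends the bound to all of $L^q((0,\infty),\F^q)$. There is no genuine obstacle here: the only points needing care are the bookkeeping with the two base measures in Schur's test and the harmless enlargement of the truncated cone, whereas all the real analytic work — the size estimate \eqref{kst} for the composed kernel — has already been done. (Heuristically, $\Phi_N$ is of the form $\mathcal{S}\,\mathcal{S}_N^{*}$ with $\mathcal{S}_N$ the truncated Lusin coarea map $g\mapsto t\partial_tP_t^\lambda(g)(x+y)$, which already explains why no convexity of $\B$ is needed; but the Schur argument gives the $L^q$ estimate directly for every $1<q<\infty$.)
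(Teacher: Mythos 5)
Your proof is correct and follows essentially the same route as the paper: both rest entirely on the kernel bound \eqref{kst} and the fact that $\int_0^\infty\int_{\G(0)}|k^\lambda_{s,t}|\,\frac{dsdu}{s^2}dv$ and the symmetric integral in $(x,y,t)$ are bounded uniformly, which is precisely Schur's test with constant weight. The paper simply carries out the Schur argument inline via H\"older's inequality in $q,q'$ followed by Fubini, rather than invoking it by name, and like you it reduces the uniformity in $N$ to the innocuous enlargement of $\G_N(0)$ to $\G(0)$.
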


        \begin{proof}
            Let $N \in \mathbb{N}$ and $g \in L^q((0,\infty),\F^q)$. Hölder's inequality implies that
            \begin{align*}
                |\Phi_N(g)(x,y,t)|
                    \leq & \left(\int_0^\infty  \int_{\G_N(0)} |k^\lambda_{s,t}(x,y;u,v)| \frac{dsdu}{s^2} dv \right)^{1/q'} \\
                         & \times  \left(\int_0^\infty  \int_{\G_N(0)} |k^\lambda_{s,t}(x,y;u,v)| \ \|g(v,u,s)\|_\B^q \frac{dsdu}{s^2} dv \right)^{1/q},
                         \quad x,t \in (0,\infty), \ y \in \mathbb{R}.
            \end{align*}
           By \eqref{kst} it follows that
            \begin{align}\label{int_kst}
                \int_0^\infty  \int_{\G_N(0)} |k^\lambda_{s,t}(x,y;u,v)| \frac{dsdu}{s^2} dv
                    \leq & C \int_{\G(0)}\int_0^\infty   \frac{st}{(z+s+t)^3} dz \frac{dsdu}{s^2}
                    \leq  C \int_0^\infty  \frac{t}{(s+t)^2}ds
                    \leq C. 
            \end{align}
            Note that $C$ does not depend on $N$.
            Now \eqref{int_kst} leads to
            \begin{align*} 
                \|\Phi_N(g)\|_{L^q((0,\infty),\F^q)}^q
                    \leq C & \int_{\G_N(0)}  \int_0^\infty \left( \int_0^\infty \int_{\G(0)} |k^\lambda_{s,t}(x,y;u,v)|   \frac{dtdy}{t^2} dx \right) \|g(v,u,s)\|_\B^q \frac{dsdu}{s^2}  dv \\
                    \leq C & \|g\|^q_{L^q((0,\infty),\F^q)}.
            \end{align*}
        \end{proof}

        In the next lemma we introduce, for every $N \in \mathbb{N}$, a family $\{K_N^\lambda(x,v)\}_{x,v \in (0,\infty), \ x \neq v}$ of bounded
        operators in $\F^q$. We prove that $K_N^\lambda(x,v)$, $x,v \in (0,\infty)$, $x \neq v$, satisfies the standard Calderón-Zygmund conditions
        uniformly in $N \in \mathbb{N}$.

        \begin{Lem}\label{Lem5.2}
            Let $\B$ be a Banach space, $\lambda>1$ and $1<q<\infty$. For every $N \in \mathbb{N}$, and $x,v \in (0,\infty)$, $x \neq v$, we define
            $$K_N^\lambda(x,v)(h)(y,t)
                = \int_{\G_N(0)} k^\lambda_{s,t}(x,y;u,v) h(u,s)\frac{dsdu}{s^2}, \quad h \in \F^q.$$
            Then,
            \begin{itemize}
                \item[$(a)$] $K_N^\lambda(x,v)$, $N \in \mathbb{N}$, is bounded from $\F^q$ into itself and there exists $C>0$ such that for every $N \in \mathbb{N}$
                and $x,v \in (0,\infty)$, $x \neq v$,
                $$ \|K_N^\lambda(x,v)(h)\|_{\F^q} \leq  \frac{C}{|x-v|} \|h\|_{\F^q}, \quad h \in \F^q.$$
                \item[$(b)$] There exits $C>0$ such that, for every $N \in \mathbb{N}$,
                $$ \|(K_N^\lambda(x_1,v) - K_N^\lambda(x_2,v))(h)\|_{\F^q} \leq C \frac{|x_1-x_2|}{|x_1-v|^2}\|h\|_{\F^q},$$
                being $h \in \F^q$ and $|x_1-v|>2|x_1-x_2|$, $x_1,x_2,v \in (0,\infty)$.
                \item[$(c)$] There exits $C>0$ such that, for every $N \in \mathbb{N}$,
                $$ \|(K_N^\lambda(x,v_1) - K_N^\lambda(x,v_2))(h)\|_{\F^q} \leq C \frac{|v_1-v_2|}{|v_1-x|^2}\|h\|_{\F^q},$$
                being $h \in \F^q$ and $|x-v_1|>2|v_1-v_2|$, $x,v_1,v_2 \in (0,\infty)$.
            \end{itemize}
        \end{Lem}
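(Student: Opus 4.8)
The plan is to show that $\{K_N^\lambda(x,v)\}_{N\in\mathbb{N}}$ is a uniform (in $N$) family of $\F^q$-valued standard Calder\'on--Zygmund kernels, so that $(a)$, $(b)$ and $(c)$ are exactly its size and regularity conditions. The starting point is the semigroup identity already used to deduce \eqref{kst},
$$k^\lambda_{s,t}(x,y;u,v)=ts\,\big(\partial_r^2P_r^\lambda\big)(x+y,u+v)\big|_{r=t+s},\qquad v,x,s,t\in(0,\infty),\ u,y\in\mathbb{R},$$
together with the companion gradient estimate
$$|\partial_xk^\lambda_{s,t}(x,y;u,v)|+|\partial_vk^\lambda_{s,t}(x,y;u,v)|\le C\,\frac{st}{\big(\,\big|\,|x+y|-|u+v|\,\big|+s+t\,\big)^4},$$
which I would obtain by differentiating the identity above in a spatial variable and invoking the bound $|(\partial_a\partial_r^2P_r^\lambda)(a,b)|\le C(|a-b|+r)^{-4}$, $a,b>0$, for the Bessel Poisson kernel (a derivative version of the estimate of $\partial_r^2P_r^\lambda$ underlying \eqref{kst}, obtainable by the arguments of \cite{BFMT} and the pointwise estimates in \cite{MS,BCFR2}), recalling that $P_r^\lambda(a,b)=P_r^\lambda(|a|,|b|)$ by \eqref{(*)} so that $\big|\partial_x|x+y|\big|\le1$, and that $k^\lambda_{s,t}$ is symmetric in the pair $(x+y,u+v)$, which disposes of $\partial_v$. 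Throughout, the crucial elementary fact is that for $(y,t)\in\G(0)$ one has $\big|\,|x+y|-x\,\big|\le|y|<t$, and likewise $\big|\,|u+v|-v\,\big|\le|u|<s$ for $(u,s)\in\G_N(0)$.

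For part $(a)$ I would fix $h\in\F^q$, estimate pointwise by Minkowski's inequality, and then split by H\"older's inequality (exactly as in Lemma~\ref{Lem5.1}) to reach, after a Tonelli interchange,
$$\|K_N^\lambda(x,v)(h)\|_{\F^q}^q\le\Big(\sup_{(y,t)\in\G(0)}\int_{\G_N(0)}|k^\lambda_{s,t}(x,y;u,v)|\,\tfrac{dsdu}{s^2}\Big)^{q-1}\Big(\sup_{(u,s)\in\G_N(0)}\int_{\G(0)}|k^\lambda_{s,t}(x,y;u,v)|\,\tfrac{dtdy}{t^2}\Big)\,\|h\|_{\F^q}^q.$$
It then suffices to bound each supremum by $C/|x-v|$ with $C$ independent of $N$. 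Using \eqref{kst}, integrating first over $u$ (resp.\ $y$) in $(-s,s)$ (resp.\ $(-t,t)$) and then over $s$ (resp.\ $t$) in $(0,\infty)$, and splitting the $u$- (resp.\ $y$-) integral according to whether $s$ (resp.\ $t$) is smaller or larger than $\big|\,|x+y|-v\,\big|/2$, one gets a bound of the shape $Ct\big(\,\big|\,|x+y|-v\,\big|+t\,\big)^{-2}$ for the first supremum (and the symmetric expression for the second). A dichotomy on $t\gtrless|x-v|/2$, combined with $\big|\,|x+y|-x\,\big|<t$ in the small-$t$ regime, turns this into $C/|x-v|$. This proves $(a)$.

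For part $(b)$, take $x_1,x_2,v\in(0,\infty)$ with $|x_1-v|>2|x_1-x_2|$. By the mean value theorem and the gradient estimate,
$$|k^\lambda_{s,t}(x_1,y;u,v)-k^\lambda_{s,t}(x_2,y;u,v)|\le C\,|x_1-x_2|\sup_{\xi}\frac{st}{\big(\,\big|\,|\xi+y|-|u+v|\,\big|+s+t\,\big)^4},$$
the supremum over $\xi$ between $x_1$ and $x_2$, for which $|\xi-v|\ge|x_1-v|-|x_1-x_2|>|x_1-v|/2$. Running the argument of $(a)$ with the exponent $4$ in place of $3$ bounds the two resulting suprema by $C/|\xi-v|^2\le C/|x_1-v|^2$; collecting the factors (and taking $q$-th roots) gives $\|(K_N^\lambda(x_1,v)-K_N^\lambda(x_2,v))(h)\|_{\F^q}\le C\,|x_1-x_2|\,|x_1-v|^{-2}\,\|h\|_{\F^q}$, uniformly in $N$. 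Part $(c)$ is proved in the same way, differentiating in $v$ rather than in $x$; alternatively it follows from $(b)$ and the symmetry of $k^\lambda_{s,t}$ under interchanging $x+y$ and $u+v$.

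The main obstacle I expect is the gradient estimate for $k^\lambda_{s,t}$: it requires the sharp pointwise control of $\partial_a\partial_r^2P_r^\lambda$, uniform in the relevant region — which is where the hypothesis $\lambda>1$ enters — rather than the crude bound $|t\partial_tP_t^\lambda(x,y)|\le CP_t^\lambda(x,y)$ from \eqref{3.2a}. Once that is available, the remaining work — the one-dimensional integral estimates, the reverse-triangle replacement of $|x+y|$ by $x$, and the dichotomy in the scale $t$ — is routine, and, crucially, does not see the truncation parameter $N$, which yields the required uniformity.
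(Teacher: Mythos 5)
Your overall strategy is sound and matches the paper's in spirit: cast $K_N^\lambda(x,v)$ as an $\F^q$-valued Calder\'on--Zygmund kernel, read part~$(a)$ off a size estimate and parts~$(b)$,~$(c)$ off a regularity estimate, both uniform in $N$. For part~$(a)$ you use a Schur test (pull out $\sup\int|k|$ in each variable with the Hölder split $|k|=|k|^{1/q}|k|^{1/q'}$), whereas the paper applies Hölder in $(u,s)$ with the full weight $|k|^{q'}$ and then computes the mixed norm $\bigl(\int_{\G_+(x)}(\int_{\G_+(v)}|k|^{q'}\frac{dsdu}{s^2})^{q/q'}\frac{dtdy}{t^2}\bigr)^{1/q}\le C/|x-v|$ directly. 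Both reduce to the size estimate \eqref{kst} and give the same conclusion; this is a cosmetic difference.

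Where your proposal has a genuine gap is the gradient estimate
$$\bigl|(\partial_a\partial_r^2P_r^\lambda)(a,b)\bigr|\le C\,(|a-b|+r)^{-4},\qquad a,b>0,\ \lambda>1,$$
which drives your proof of $(b)$ and $(c)$. You assert it is ``obtainable by the arguments of \cite{BFMT} and the pointwise estimates in \cite{MS,BCFR2},'' but neither reference contains a spatial derivative of $\partial_r^2P_r^\lambda$; the paper only quotes \cite{BFMT} for the $r$-derivative bound $|\partial_r^2P_r^\lambda(a,b)|\lesssim(|a-b|+r)^{-3}$ underlying \eqref{kst}, and proves the extra spatial derivative estimate itself, by differentiating $\partial_r^2P^\lambda_{r,1}$ explicitly, isolating the term $\mathcal{L}^\lambda_{r,1,1}$ that comes from $\partial_y$ hitting the prefactor $(yu)^\lambda$ (producing the dangerous factor $u^{\lambda}y^{\lambda-1}$), and controlling it by two complementary bounds: the trivial one $|\mathcal{L}^\lambda_{r,1,1}|\le\frac{C}{y}(|y-u|+r)^{-3}$, good when $y\gtrsim|z-v|$, and a second one $|\mathcal{L}^\lambda_{r,1,1}|\le Cur(|y-u|+r)^{-6}$, obtained from the factorisation $[(y-u)^2+r^2+yu\theta^2]^{\lambda+2}\ge[(y-u)^2+r^2]^3[yu\theta^2]^{\lambda-1}$, \emph{which is exactly where $\lambda>1$ enters} (the exponent $\lambda-1$ must be nonnegative). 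Combining these two regime bounds does yield your pointwise estimate, so your claim is true for $\lambda>1$, but the derivation is not in the cited sources and constitutes essentially all the work of the paper's proof of $(b)$. Your proposal identifies this as ``the main obstacle'' and correctly locates the role of $\lambda>1$, but it does not discharge the obstacle. The remainder of your argument (mean value theorem, comparability of $|x+y|$ with $x$ and $|u+v|$ with $v$ on the respective cones, the Schur test with exponent $4$ instead of $3$, and deducing $(c)$ by the symmetry of $P^\lambda_r$) is correct and matches the paper.
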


        \begin{proof}
            $(a)$ Note firstly that if $h \in \F^q$ we have that, for every $N \in \mathbb{N}$ and $v,x \in (0,\infty)$, with $x \neq v$,
            \begin{align*}
                \|K_N^\lambda(x,v)(h)\|_{\F^q}
                    \leq & \left( \int_{\G(0)}
                        \int_{\G_N(0)} \|h(u,s)\|_\B^q \frac{dsdu}{s^2}
                        \left(\int_{\G_N(0)} |k^\lambda_{s,t}(x,y;u,v)|^{q'} \frac{dsdu}{s^2}  \right)^{q/q'}
                        \frac{dtdy}{t^2} \right)^{1/q} \\
                    \leq  & C \left( \int_{\G_+(x)}
                        \left(\int_{\G_+(v)} |k^\lambda_{s,t}(0,y;u,0)|^{q'} \frac{dsdu}{s^2}  \right)^{q/q'}
                        \frac{dtdy}{t^2} \right)^{1/q} \|h\|_{\F^q},
            \end{align*}
            being $C>0$ independent of $N$. Then, $(a)$ is established when we prove that for a certain $C>0$
            \begin{equation}\label{5.11}
                \left( \int_{\G_+(x)}\left(\int_{\G_+(v)} |k^\lambda_{s,t}(0,y;u,0)|^{q'} \frac{dsdu}{s^2}  \right)^{q/q'} \frac{dtdy}{t^2} \right)^{1/q}
                    \leq \frac{C}{|x-v|}, \quad x,v \in (0,\infty), \ x \neq v.
            \end{equation}

            We write
            $$k^\lambda_{s,t}(0,y;u,0)
                = ts \partial_r^2 P_{r,1}^\lambda(y,u)_{|_{r=t+s}} + ts \partial_r^2 P_{r,2}^\lambda(y,u)_{|_{r=t+s}},
                \quad y,u,t,s \in (0,\infty),$$
            where $P_{r,1}^\lambda(y,u)$ and $P_{r,2}^\lambda(y,u)$ are given by \eqref{7.2}.
            We have that
            \begin{align*}
                \partial_r^2 P_{r,1}^\lambda(y,u)
                    = & -\frac{12\lambda(\lambda+1) (yu)^\lambda r}{\pi} \int_0^{\pi/2} \frac{(\sin \theta)^{2\lambda-1}}{[(y-u)^2+r^2+2yu(1-\cos \theta)]^{\lambda+2}} d\theta  \\
                      & + \frac{8\lambda(\lambda+1)(\lambda+2) (yu)^\lambda r^3}{\pi} \int_0^{\pi/2} \frac{(\sin \theta)^{2\lambda-1}}{[(y-u)^2+r^2+2yu(1-\cos \theta)]^{\lambda+3}} d\theta \\
                    = & \mathcal{L}_{r,1}^{\lambda}(y,u)  +  \mathcal{L}_{r,2}^{\lambda}(y,u), \quad r,y,u \in (0,\infty).
            \end{align*}
            It is clear that $ |\mathcal{L}_{r,2}^{\lambda}(y,u)| \leq C |\mathcal{L}_{r,1}^{\lambda}(y,u)|$, $r,y,u \in (0,\infty)$.
            Moreover, by taking into account that $\sin \theta \sim \theta$ and $2(1-\cos \theta) \sim \theta^2$, when $\theta \in [0,\pi/2]$,
            and  making the change of variables $\theta = \sqrt{|y-u|^2+r^2}\phi/\sqrt{uy}$, we get
            $$ |\mathcal{L}_{r,1}^{\lambda}(y,u)|
                \leq C r (yu)^\lambda \int_0^{\pi/2} \frac{\theta^{2\lambda-1}}{((y-u)^2+r^2+yu\theta^2)^{\lambda+2}} d\theta
                \leq  \frac{C}{(|y-u|+r)^3}, \quad r,y,u \in (0,\infty).$$
            Then, since $|y-u| + t+ s \sim |x-v|+t+s$, when $(y,t) \in \G_+(x)$ and $(u,s) \in \G_+(v)$, we obtain
            \begin{align*}
                & \left( \int_{\G_+(x)}\left(\int_{\G_+(v)} |st \partial_r^2 P_{r,1}^\lambda(y,u)_{|r=s+t}|^{q'} \frac{dsdu}{s^2}  \right)^{q/q'}\frac{dtdy}{t^2} \right)^{1/q} \\
                & \qquad \qquad \leq C  \left( \int_{\G_+(x)}t^{q-2}\left(\int_{\G_+(v)} \frac{s^{q'-2}}{(|x-v|+s+t)^{3q'}} dsdu  \right)^{q/q'}dtdy \right)^{1/q} \\
                & \qquad \qquad \leq C  \left( \int_{\G_+(x)}t^{q-2}\left(\int_0^\infty \frac{s^{q'-1}}{(|x-v|+s+t)^{3q'}} ds  \right)^{q/q'}dtdy \right)^{1/q} \\
                & \qquad \qquad \leq C  \left( \int_{\G_+(x)}t^{q-2}\left(\int_0^\infty \frac{ds}{(|x-v|+s+t)^{2q'+1}} ds  \right)^{q/q'}dtdy \right)^{1/q} \\
                & \qquad \qquad \leq C  \left( \int_0^\infty \int_{|y-x|\leq t}\frac{t^{q-2}}{(|x-v|+t)^{2q}}  dtdy \right)^{1/q}
                \leq  \frac{C}{|x-v|}, \quad x,v \in (0,\infty), \ x \neq v.
            \end{align*}
            In a similar way we can get that
            $$\left( \int_{\G_+(x)}\left(\int_{\G_+(v)} |st \partial_r^2 P_{r,2}^\lambda(y,u)_{|r=s+t}|^{q'} \frac{dsdu}{s^2}  \right)^{q/q'}\frac{dtdy}{t^2} \right)^{1/q}
                \leq \frac{C}{|x-v|}, \quad x,v \in (0,\infty), \ x \neq v.$$
            Hence \eqref{5.11} is established and $(a)$ is proved.\\

            $(b)$ By proceeding as above and using Minkowski's inequality we can see that
            \begin{align*}
                \|K_N^\lambda(x_1,v)(h) & - K_N^\lambda(x_2,v)(h)\|_{\F^q} \\
                    \leq & \left( \int_{\G(0)} \left(\int_{\G(0)} |k^\lambda_{s,t}(x_1,y;u,v) - k^\lambda_{s,t}(x_2,y;u,v)|^{q'} \frac{dsdu}{s^2}  \right)^{q/q'}
                            \frac{dtdy}{t^2} \right)^{1/q} \|h\|_{\F^q}\\
                    \leq & \left( \int_{\G(0)} \left(\int_{\G(0)} \left| \int_{x_1}^{x_2} \left| \partial_z k^\lambda_{s,t}(z,y;u,v) \right| dz \right|^{q'} \frac{dsdu}{s^2}  \right)^{q/q'}
                            \frac{dtdy}{t^2} \right)^{1/q} \|h\|_{\F^q} \\
                    \leq & \left| \int_{x_1}^{x_2} \left( \int_{\G(0)} \left(\int_{\G(0)}  \left| \partial_z k^\lambda_{s,t}(z,y;u,v) \right|^{q'} \frac{dsdu}{s^2}  \right)^{q/q'}
                            \frac{dtdy}{t^2} \right)^{1/q}dz \right| \|h\|_{\F^q}.
            \end{align*}
            Hence, $(b)$ is shown when we prove that
            \begin{equation}\label{5.12}
                \left| \int_{x_1}^{x_2} \left( \int_{\G(0)} \left(\int_{\G(0)}  \left| \partial_z k^\lambda_{s,t}(z,y;u,v) \right|^{q'} \frac{dsdu}{s^2}  \right)^{q/q'}
                            \frac{dtdy}{t^2} \right)^{1/q}dz \right|
                    \leq C \frac{|x_1-x_2|}{|x_1-v|^2},
            \end{equation}
            for every $x_1,x_2,v \in (0,\infty)$ such that $|x_1-v|>2|x_1-x_2|$.\\

            From now on, we take into account that $\lambda>1$.  Suppose that $x_1,x_2,v \in (0,\infty)$ such that $|x_1-v|>2|x_1-x_2|$. We can write
            \begin{align}\label{5.13}
                & \left| \int_{x_1}^{x_2} \left( \int_{\G(0)} \left(\int_{\G(0)}  \left| \partial_z k^\lambda_{s,t}(z,y;u,v) \right|^{q'} \frac{dsdu}{s^2}  \right)^{q/q'}
                            \frac{dtdy}{t^2} \right)^{1/q}dz \right| \nonumber \\
                & \qquad  \leq C \left| \int_{x_1}^{x_2} \left( \int_{\G_+(z)} \left(\int_{\G_+(v)}  \left| \partial_y k^\lambda_{s,t}(0,y;u,0) \right|^{q'} \frac{dsdu}{s^2}  \right)^{q/q'}
                            \frac{dtdy}{t^2} \right)^{1/q}dz \right|.
            \end{align}
            By keeping the notation in the proof of $(a)$, straightforward manipulations lead to
            \begin{align*}
                \partial_y \partial_r^2 P_{r,1}^\lambda(y,u)
                    = & -\frac{12\lambda^2(\lambda+1) (yu)^{\lambda-1}u r}{\pi} \int_0^{\pi/2} \frac{(\sin \theta)^{2\lambda-1}}{[(y-u)^2+r^2+2yu(1-\cos \theta)]^{\lambda+2}} d\theta  \\
                      & +\frac{24\lambda(\lambda+1)(\lambda+2) (yu)^\lambda r}{\pi} \int_0^{\pi/2} \frac{(\sin \theta)^{2\lambda-1}((y-u)+u(1-\cos \theta))}{[(y-u)^2+r^2+2yu(1-\cos \theta)]^{\lambda+3}} d\theta  \\
                      & +\frac{8\lambda^2(\lambda+1)(\lambda+2) (yu)^{\lambda-1}u r^3}{\pi} \int_0^{\pi/2} \frac{(\sin \theta)^{2\lambda-1}}{[(y-u)^2+r^2+2yu(1-\cos \theta)]^{\lambda+3}} d\theta\\
                      & - \frac{16\lambda(\lambda+1)(\lambda+2)(\lambda+3) (yu)^\lambda r^3}{\pi} \int_0^{\pi/2} \frac{(\sin \theta)^{2\lambda-1}((y-u)+u(1-\cos \theta))}{[(y-u)^2+r^2+2yu(1-\cos \theta)]^{\lambda+4}} d\theta \\
                    = & \mathcal{L}_{r,1,1}^{\lambda}(y,u)+ \mathcal{L}_{r,1,2}^{\lambda}(y,u)
                        + \mathcal{L}_{r,2,1}^{\lambda}(y,u) +  \mathcal{L}_{r,2,2}^{\lambda}(y,u) , \quad r,y,u \in (0,\infty).
            \end{align*}
            If we define
            $$\mathcal{L}_{r,3}^{\lambda}(y,u)
                = (yu)^\lambda \int_0^{\pi/2} \frac{(\sin \theta)^{2\lambda-1}}{[(y-u)^2+r^2+2yu(1-\cos \theta)]^{\lambda+2}} d\theta, \quad r,y,u \in (0,\infty),$$
            we have the following relations,
            \begin{itemize}
                \item $\mathcal{L}_{r,1,2}^{\lambda}(y,u) \leq C  \left(\mathcal{L}_{r,1,1}^{\lambda}(y,u)  + \mathcal{L}_{r,3}^{\lambda}(y,u)\right), \quad r,y,u \in (0,\infty),$
                \item $\mathcal{L}_{r,2,1}^{\lambda}(y,u) \leq C  \mathcal{L}_{r,1,1}^{\lambda}(y,u) , \quad r,y,u \in (0,\infty),$
                \item $\mathcal{L}_{r,2,2}^{\lambda}(y,u)  \leq C  \left(\mathcal{L}_{r,1,1}^{\lambda}(y,u) + \mathcal{L}_{r,3}^{\lambda}(y,u)\right), \quad r,y,u \in (0,\infty).$
            \end{itemize}
            Therefore, it is sufficient to analyze $\mathcal{L}_{r,1,1}^{\lambda}(y,u)$ and $\mathcal{L}_{r,3}^{\lambda}(y,u)$, $r,y,u \in (0,\infty)$.\\

            Now we can see
            $$\mathcal{L}_{s+t,3}^{\lambda}(y,u)
                \leq \dfrac{C}{(|y-u|+s+t)^4}
                \leq \dfrac{C}{(|z-v|+s+t)^4}, \quad (y,t) \in \G_+(z), \ (u,s) \in \G_+(v).$$
            Moreover, we have that
            $$\mathcal{L}_{s+t,1,1}^{\lambda}(y,u)
                \leq C \dfrac{1}{|z-v|(|z-v|+s+t)^3}, \quad y \geq |z-v|, \  (y,t) \in \G_+(z), \ (u,s) \in \G_+(v),$$
            and, since $\lambda>1$,
            \begin{align*}
                \mathcal{L}_{s+t,1,1}^{\lambda}(y,u)
                    \leq & C (yu)^{\lambda-1}u (s+t) \int_0^{\pi/2} \frac{\theta^{2(\lambda-1)}}{[(y-u)^2+(s+t)^2+yu\theta^2]^{\lambda+2}} d\theta \\
                    \leq & C \frac{u(s+t)}{(|y-u|+s+t)^6}
                    \leq  C \frac{|u-y|+y}{(|y-u|+s+t)^5}
                    \leq \frac{C}{(|z-v|+s+t)^4} \\
                    \leq  & \frac{C}{|z-v|(|z-v|+s+t)^3}, \quad y \leq |z-v|, \  (y,t) \in \G_+(z), \ (u,s) \in \G_+(v).
            \end{align*}
            The same computations made in the proof of  $(a)$ give us
            $$\left( \int_{\G_+(z)}
                            \left(\int_{\G_+(v)} |st \partial_y \partial_r^2 P_{r,1}^\lambda(y,u)_{|r=s+t} |^{q'} \frac{dsdu}{s^2}  \right)^{q/q'}
                        \frac{dtdy}{t^2} \right)^{1/q}
                        \leq \frac{C}{|z-v|^2}, \quad v,z \in (0,\infty).$$
            Similarly we can obtain
            $$\left( \int_{\G_+(z)}
                            \left(\int_{\G_+(v)} |st \partial_y \partial_r^2 P_{r,2}^\lambda(y,u)_{|r=s+t} |^{q'} \frac{dsdu}{s^2}  \right)^{q/q'}
                        \frac{dtdy}{t^2} \right)^{1/q}
                        \leq \frac{C}{|z-v|^2}, \quad v,z \in (0,\infty).$$
            Hence, we conclude that
            $$\left( \int_{\G_+(z)}
                            \left(\int_{\G_+(v)} |\partial_y k^\lambda_{s,t}(0,y;u,0) |^{q'} \frac{dsdu}{s^2}  \right)^{q/q'}
                        \frac{dtdy}{t^2} \right)^{1/q}
                        \leq \frac{C}{|z-v|^2}, \quad v,z \in (0,\infty).$$
            From \eqref{5.13} it follows that
            \begin{align*}
              & \left| \int_{x_1}^{x_2} \left( \int_{\G(0)} \left(\int_{\G(0)}  \left| \partial_z k^\lambda_{s,t}(z,y;u,v) \right|^{q'} \frac{dsdu}{s^2}  \right)^{q/q'}
                            \frac{dtdy}{t^2} \right)^{1/q}dz \right|
               \leq C \left| \int_{x_1}^{x_2} \frac{1}{|z-v|^2} dz \right|
                        \leq C \frac{|x_1-x_2|}{|v-x_1|^2},
            \end{align*}
            for each $x_1,x_2,v \in (0,\infty)$ such that $|x_1-v|>2|x_1-x_2|.$\\

            Thus, \eqref{5.12} is shown and the proof of $(b)$ is completed.\\

            $(c)$ The proof of $(c)$ is essentially the same one of $(b)$.
        \end{proof}

        We now obtain a representation of the operator $\Phi_N$ as a vector valued integral operator, for every $N \in \mathbb{N}$.

        \begin{Lem}\label{Lem5.3}
            Let $\B$ be a Banach space, $\lambda>0$, $1<q<\infty$ and $N \in \mathbb{N}$. We denote by $K_N^\lambda(x,v)$, $x,v \in (0,\infty)$, $x \neq v$,
            the operator introduced in Lemma~\ref{Lem5.2}. Then,
            \begin{equation}\label{5.14}
                \Phi_N(g)(x)
                    = \int_0^\infty K_N^\lambda(x,v)(g(v)) dv, \quad \text{a.e. } x \notin \supp(g), \
                        g \in L^\infty_{c}(0,\infty) \otimes \left( L^q(\G(0),\frac{dtdy}{t^2}) \otimes \B \right),
            \end{equation}
            where if $g \in L^\infty_{c}((0,\infty),\F^q)$ we represent
            \begin{itemize}
                \item for every $v \in (0,\infty)$, $g(v)(y,t)=g(v,y,t)$, $(y,t) \in \G(0)$,
                \item for every $x \in (0,\infty)$, $\Phi_N(g)(x)(u,s)=\Phi_N(g)(x,u,s)$, $(u,s) \in \G(0)$.
            \end{itemize}
            The integral in \eqref{5.14} is understood in the $\F^q$-Bochner sense.
        \end{Lem}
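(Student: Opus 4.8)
The plan is to verify \eqref{5.14} for an elementary tensor and then sum. Write $g=\sum_{j=1}^{n}\varphi_j\otimes w_j$ with $\varphi_j\in L^\infty_c(0,\infty)$ and $w_1,\dots,w_n\in L^q(\G(0),\frac{dtdy}{t^2})\otimes\B$ linearly independent; then $g(v)=0$ in $\F^q$ exactly when all $\varphi_j(v)$ vanish, whence $\supp(g)=\bigcup_{j=1}^{n}\supp\varphi_j$ and, in particular, $\supp\varphi_j\subset\supp(g)$ for every $j$. Decomposing each $w_j$ into finitely many terms $\psi\otimes b$ and using the linearity of $\Phi_N$ and of $h\mapsto K_N^\lambda(x,v)(h)$, it suffices to prove \eqref{5.14} when $g(v,u,s)=\varphi(v)\psi(u,s)b$ with $\varphi\in L^\infty_c(0,\infty)$, $\psi\in L^q(\G(0),\frac{dtdy}{t^2})$, $b\in\B$, and $x\notin\supp\varphi$. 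Fix such a $g$ and such an $x$, and put $d=\mathrm{dist}(x,\supp\varphi)>0$; then $g(v)=\varphi(v)\,\psi b$ in $\F^q$, so $K_N^\lambda(x,v)(g(v))=\varphi(v)\,K_N^\lambda(x,v)(\psi b)$ for every $v\in(0,\infty)$.

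The first step is to show that $v\mapsto K_N^\lambda(x,v)(g(v))$ is an $\F^q$-Bochner integrable function on $(0,\infty)$. Since $k^\lambda_{s,t}(x,y;u,v)$ is obtained by integrating in $z$ functions that are smooth in all their arguments, a routine application of Fubini's theorem together with \eqref{kst} shows that $(v,y,t)\mapsto\int_{\G_N(0)}k^\lambda_{s,t}(x,y;u,v)\psi(u,s)\frac{dsdu}{s^2}$ is jointly measurable and that, for a.e.\ $v$, it is (up to the constant vector $b$) the $\F^q$-representative of $K_N^\lambda(x,v)(\psi b)$; hence $v\mapsto K_N^\lambda(x,v)(g(v))$ is strongly measurable, its essential range lying in the separable subspace $L^q(\G(0),\frac{dtdy}{t^2})\otimes\{b\}$ of $\F^q$. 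For the integrability, Lemma~\ref{Lem5.2}$(a)$ gives
\[
\|K_N^\lambda(x,v)(g(v))\|_{\F^q}=|\varphi(v)|\,\|K_N^\lambda(x,v)(\psi b)\|_{\F^q}\le\frac{C\,|\varphi(v)|}{|x-v|}\,\|\psi b\|_{\F^q}\le\frac{C}{d}\,\|\psi\|_{L^q(\G(0),\frac{dtdy}{t^2})}\,\|b\|_\B\,|\varphi(v)|,
\]
and the last expression is integrable in $v$ because $\varphi\in L^\infty_c(0,\infty)$. Thus $F:=\int_0^\infty K_N^\lambda(x,v)(g(v))\,dv$ exists as an element of $\F^q$.

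The second step is to identify $F$ with $\Phi_N(g)(x)$. Since $F$ is the Bochner integral of a Bochner integrable function with values in $\F^q=L^q\!\left(\G(0),\frac{dtdy}{t^2},\B\right)$, the standard Fubini (scalarization) theorem for Bochner integrals of $L^q$-valued functions gives, for a.e.\ $(y,t)\in\G(0)$,
\[
F(y,t)=\int_0^\infty K_N^\lambda(x,v)(g(v))(y,t)\,dv=\int_0^\infty\int_{\G_N(0)}k^\lambda_{s,t}(x,y;u,v)\,g(v,u,s)\,\frac{dsdu}{s^2}\,dv,
\]
with the $v$-integral an absolutely convergent $\B$-Bochner integral. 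By the very definition of $\Phi_N$ (and the absolute convergence of its defining integral already established for $h\in L^\infty_c$) the right-hand side is exactly $\Phi_N(g)(x,y,t)$; hence $F=\Phi_N(g)(x)$ in $\F^q$, which is \eqref{5.14} for $g=\varphi\otimes\psi\otimes b$. Summing over the finitely many tensors making up a general $g$, and recalling $\supp\varphi_j\subset\supp(g)$ for the chosen representation, yields \eqref{5.14} for every $x\notin\supp(g)$, hence in particular for a.e.\ such $x$.

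The only genuinely delicate point I anticipate is the scalarization step, i.e.\ the passage from the pointwise (in $(y,t)$) formula for the integrand to an identity valid in $\F^q$: one must combine the strong measurability of $v\mapsto K_N^\lambda(x,v)(g(v))$, its $\F^q$-integrability away from $\supp(g)$ coming from the Calder\'on--Zygmund size bound of Lemma~\ref{Lem5.2}$(a)$, and the absolute convergence of the underlying scalar integral supplied by \eqref{kst}, so as to legitimately invoke the Bochner-integral Fubini theorem. The remaining verifications (joint measurability of the kernel, the tensor reductions) are routine.
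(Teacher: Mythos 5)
Your proof is correct, but it takes a genuinely different route from the paper's. The paper first notes it is enough to treat finite-dimensional $\B$ (which is automatic since $g$ lives in a tensor product), and then proves the identity \eqref{5.14} by testing against arbitrary functionals $H\in\bigl(L^q((0,\infty),\F^q)\bigr)^*$: invoking \cite[Corollary~III.2.13]{DU} to identify $\bigl(L^q((0,\infty),\F^q)\bigr)^*$ with $L^{q'}\bigl((0,\infty),(\F^q)^*\bigr)$, it rewrites both $\langle H,\Phi_N(g)\rangle$ and $\langle H,\int_0^\infty K_N^\lambda(\cdot,v)(g(v))\,dv\rangle$ as the same iterated scalar integral via Fubini, with the interchange justified by the absolute-convergence estimate coming from \eqref{int_kst} and H\"older. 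You instead avoid duality entirely: you reduce to elementary tensors $g=\varphi\otimes\psi\otimes b$, verify Bochner integrability of $v\mapsto K_N^\lambda(x,v)(g(v))$ directly from Lemma~\ref{Lem5.2}$(a)$ and the lower bound $|x-v|\ge\operatorname{dist}(x,\supp\varphi)$, and then invoke the scalarization/Fubini theorem for Bochner integrals with values in $L^q$ to pass from the $\F^q$-valued identity to the pointwise one, which matches the defining formula for $\Phi_N$. Both arguments are in essence Fubini interchanges. The paper's dualization makes the Fubini step transparent but forces it through the finite-dimensional (or RNP) restriction needed for the dual identification of $L^q$; your pointwise scalarization bypasses that restriction and is closer to first principles, at the cost of leaning on the Bochner--Fubini theorem, which you state rather than prove. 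One small point worth spelling out: the claim $\supp g=\bigcup_j\supp\varphi_j$ for a representation with linearly independent $w_j$ uses that finite unions commute with closures/essential supports, which you should mention; with that, the identity holds for every (not merely almost every) $x$ off $\supp g$, slightly strengthening the stated conclusion.
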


        \begin{proof}
            It is sufficient to show the result when $\B$ has finite dimension.\\
            Let $g \in L^\infty_{c}((0,\infty),\F^q)$. We are going to see that, for almost all $x \notin \supp(g)$,
            \begin{equation}\label{5.15}
                \int_0^\infty  \int_{\G_N(0)} k^\lambda_{s,t}(x,y;u,v)  g(v,u,s)\frac{dsdu}{s^2} dv
                    = \left(\int_0^\infty K_N^\lambda(x,v)(g(v)) dv\right)(x,y,t),
            \end{equation}
            in the sense of equality in $\F^q$. Note that the $\F^q$-Bochner integral in the right hand side is absolutely convergent
            for every $x \notin \supp(g)$. Indeed, according to Lemma~\ref{Lem5.2}, $(a)$, we get
            \begin{align*}
                \int_0^\infty \|K_N^\lambda(x,v)(g(v))\|_{\F^q} dv
                    \leq &C \int_{\supp(g)} \frac{\|g(v)\|_{\F^q}}{|x-v|}dv \\
                    \leq &C \|g\|_{L^\infty((0,\infty),\F^q)} \int_{\supp(g)} \frac{dv}{|x-v|}
                    < \infty, \quad x \notin \supp(g).
            \end{align*}

            In order to show \eqref{5.15} it is enough to see that, for every $H \in \left( L^q((0,\infty),\F^q)\right)^*$ and $x \notin \supp(g)$
            \begin{align}\label{dualidad}
                \langle H(x,y,t), \int_0^\infty  \int_{\G_N(0)} k^\lambda_{s,t}(x,y;u,v)  g(v,u,s)\frac{dsdu}{s^2} dv\rangle
                    = \langle H, \int_0^\infty K_N^\lambda(x,v)(g(v)) dv \rangle.
            \end{align}

            Let $H \in \left( L^q((0,\infty),\F^q)\right)^*$ and $x \notin \supp(g)$. By \cite[Corollary III.2.13]{DU},
            $\left( L^q((0,\infty),\F^q)\right)^*=L^{q'}((0,\infty),(\F^q)^*)$, where $(\F^q)^*=L^{q'}\left(\G(0),\frac{dt dy}{t^2}, \B^*\right)$.
            Hence, there exists $h \in L^{q'}((0,\infty),(\F^q)^*)$ such that
            $$\langle H, G \rangle
                = \int_0^\infty \langle h(x),G(x) \rangle_{(\F^{q})^* \times \F^q } dx, \quad G \in L^q((0,\infty),\F^q).$$
            Hence, we can write
            \begin{align*}
                \langle H(x,y,t), \int_0^\infty  \int_{\G_N(0)} & k^\lambda_{s,t}(x,y;u,v)  g(v,u,s)\frac{dsdu}{s^2} dv\rangle \\
                    = & \int_0^\infty  \int_{\G(0)} \int_0^\infty  \int_{\G_N(0)} k^\lambda_{s,t}(x,y;u,v) \langle h(x,y,t), g(v,u,s) \rangle \frac{dsdu}{s^2}  dv \frac{dtdy}{t^2} dx.
            \end{align*}
            Moreover, well-known properties of the Bochner integrals lead us to
            \begin{align*}
                \langle H, \int_0^\infty K_N^\lambda(x&,v)(g(v)) dv \rangle
                    = \int_0^\infty \langle H , K_N^\lambda(x,v)(g(v)) \rangle dv \\
                    = & \int_0^\infty  \int_0^\infty \int_{\G(0)}   \int_{\G_N(0)} k^\lambda_{s,t}(x,y;u,v) \langle h(x,y,t), g(v,u,s) \rangle \frac{dsdu}{s^2} \frac{dtdy}{t^2}dx dv.
            \end{align*}
            To obtain \eqref{dualidad} we only need to show that the last integral is absolutely convergent.
            For this purpose we apply Hölder's inequality and \eqref{int_kst} as follows
            \begin{align*}
                \int_0^\infty & \int_0^\infty   \int_{\G(0)}   \int_{\G_N(0)} |k^\lambda_{s,t}(x,y;u,v)| \ \|g(v,u,s)\|_\B \frac{dsdu}{s^2} \|h(x,y,t)\|_{\B^*}  \frac{dtdy}{t^2}dx dv \\
                    \leq & \left(\int_0^\infty  \int_{\G(0)} \int_0^\infty  \int_{\G_N(0)}  |k^\lambda_{s,t}(x,y;u,v)| \|h(x,y,t)\|_{\B^*}^{q'}  \frac{dsdu}{s^2}dv \frac{dtdy}{t^2}dx  \right)^{1/q'}\\
                    & \times \left( \int_0^\infty  \int_{\G(0)} \int_0^\infty  \int_{\G_N(0)} |k^\lambda_{s,t}(x,y;u,v)| \|g(v,u,s)\|_{\B}^q \frac{dsdu}{s^2}dv  \frac{dtdy}{t^2}dx \right)^{1/q} \\
                    \leq & C  \|h\|_{L^{q'}((0,\infty),(\F^q)^*)} \|g\|_{L^q((0,\infty),\F^q)}
                    < \infty.
            \end{align*}
            The proof is finished.
        \end{proof}

        By using Lemmas~\ref{Lem5.1}, \ref{Lem5.2} and \ref{Lem5.3} and as a consequence of the theory of vector valued Calderón-Zygmund
        operators (see \cite{RbRuT})
        we obtain the following result that we used in the proof of Theorems~\ref{Th_3.1_OX} and \ref{Th_4.1_OX}.

        \begin{Prop}\label{Pro5.2}
             Let $\B$ be a Banach space, $\lambda>1$, $1<q<\infty$. Then, for each $N \in \mathbb{N}$, the operator $\Phi_N$ can be extended
             \begin{itemize}
                \item[$(a)$] to $L^p((0,\infty), \F^q)$ as a bounded operator from $L^p((0,\infty), \F^q)$ into itself, for every $1<p<\infty$;
                \item[$(b)$] to $H^1((0,\infty), \F^q)$ as a bounded operator from $H^1((0,\infty), \F^q)$ into $L^1((0,\infty), \F^q)$.
             \end{itemize}
             Moreover, for every $1<p<\infty$ there exists $C_p>0$ such that
             $$\|\Phi_N(g)\|_{L^p((0,\infty), \F^q)}
                \leq C_p \|g\|_{L^p((0,\infty), \F^q)}, \quad g \in L^p((0,\infty), \F^q),$$
             and there exists $C_1>0$ such that
             $$\|\Phi_N(g)\|_{L^1((0,\infty), \F^q)}
                \leq C_1 \|g\|_{H^1((0,\infty), \F^q)}, \quad g \in H^1((0,\infty), \F^q),$$
             for every $N \in \mathbb{N}$.
        \end{Prop}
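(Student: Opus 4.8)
The plan is to identify $\Phi_N$, for each fixed $N\in\mathbb{N}$, with a Calder\'on--Zygmund operator whose kernel takes values in $\mathcal{L}(\F^q)$, the space of bounded linear operators on $\F^q$, and then to invoke the vector valued Calder\'on--Zygmund theory of \cite{RbRuT}, being careful to check that all the constants involved are independent of $N$.

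First I would assemble the three ingredients already at our disposal. Lemma~\ref{Lem5.1} gives that $\Phi_N$ is bounded on $L^q((0,\infty),\F^q)$, with a bound independent of $N$; this plays the role of the single $L^{p_0}$-boundedness required to start the Calder\'on--Zygmund machinery. Lemma~\ref{Lem5.3} shows that, on the dense class $L^\infty_{c}(0,\infty)\otimes\left(L^q(\G(0),\frac{dtdy}{t^2})\otimes\B\right)$, the operator $\Phi_N$ is the integral operator
$$\Phi_N(g)(x)=\int_0^\infty K_N^\lambda(x,v)(g(v))\,dv,\qquad\text{a.e. } x\notin\supp(g),$$
with $\mathcal{L}(\F^q)$-valued kernel $K_N^\lambda(x,v)$, $x\neq v$. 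Finally, Lemma~\ref{Lem5.2} establishes that $\{K_N^\lambda\}_{N\in\mathbb{N}}$ is, uniformly in $N$, a family of standard $\mathcal{L}(\F^q)$-valued Calder\'on--Zygmund kernels: the size estimate $\|K_N^\lambda(x,v)\|_{\mathcal{L}(\F^q)}\le C\,|x-v|^{-1}$ and H\"ormander type regularity estimates in both the first and the second variable, all with $C$ not depending on $N$.

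With these facts in hand I would apply the general theory of Calder\'on--Zygmund operators with operator valued kernels (see \cite{RbRuT}). It gives that an operator bounded on $L^q((0,\infty),\F^q)$ whose kernel satisfies the above size and smoothness conditions is of weak type $(1,1)$; Marcinkiewicz interpolation then yields boundedness on $L^p((0,\infty),\F^q)$ for $1<p\le q$, and the dual estimate (the transposed kernel satisfying the same conditions, by Lemma~\ref{Lem5.2}$(b)$) extends this to $q\le p<\infty$; the same theory also provides the endpoint bound from $H^1((0,\infty),\F^q)$ into $L^1((0,\infty),\F^q)$. Since the only quantitative data entering these conclusions are the $L^q$-norm of $\Phi_N$ from Lemma~\ref{Lem5.1} and the kernel constants from Lemma~\ref{Lem5.2}, both uniform in $N$, the resulting $L^p$ and $H^1$--$L^1$ bounds are uniform in $N$, which is exactly the statement of the proposition.

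The step that requires most care is the $H^1$--$L^1$ estimate. I would argue by atoms: if $a$ is an $\F^q$-atom supported on a bounded interval $I\subset(0,\infty)$ with $\int_I a=0$, I split $\|\Phi_N(a)\|_{L^1((0,\infty),\F^q)}$ into the contributions of $2I$ and of $(0,\infty)\setminus 2I$. On $2I$ one applies H\"older's inequality together with the uniform $L^q((0,\infty),\F^q)$-boundedness of $\Phi_N$ and the normalization of $a$; on the complement of $2I$ one inserts the mean value $K_N^\lambda(x,x_I)$, where $x_I$ is the center of $I$, using $\int_I a=0$, and bounds the difference by the smoothness of the kernel in the second variable from Lemma~\ref{Lem5.2}$(c)$, which gives $\int_{(0,\infty)\setminus 2I}\|K_N^\lambda(x,v)-K_N^\lambda(x,x_I)\|_{\mathcal{L}(\F^q)}\,dx\le C$ uniformly in $N$, in $I$ and in $v\in I$. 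Summing over the atoms in an atomic decomposition of an arbitrary $h\in H^1((0,\infty),\F^q)$ completes the argument. The only conceptual subtlety is that $H^1((0,\infty),\F^q)$ has to be understood with its Banach-valued atomic structure, but this is precisely the framework covered by \cite{RbRuT}, so the three lemmas are all that is needed.
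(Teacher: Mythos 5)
Your proposal is correct and follows essentially the same route as the paper, which derives the proposition directly from Lemmas~\ref{Lem5.1}, \ref{Lem5.2} and \ref{Lem5.3} together with the operator-valued Calder\'on--Zygmund theory of \cite{RbRuT}. You have merely spelled out the standard steps (weak $(1,1)$, interpolation, duality, and the atomic $H^1$--$L^1$ argument) that the paper leaves implicit in its one-line appeal to \cite{RbRuT}.
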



\end{document}